\DeclareMathAlphabet{\mathcal}{OMS}{cmsy}{m}{n}
\numberwithin{equation}{section}
\numberwithin{figure}{section}
\newcommand{\bitem}{\begin{itemize}}
\newcommand{\eitem}{\end{itemize}}
\newcommand{\mc}[1]{\mathcal{#1}}
\newcommand{\N}{\mathbb{N}}
\newcommand{\R}{\mathbb{R}}
\newcommand{\C}{\mathbb{C}}
\newcommand{\EE}{\mathbb{E}}
\newcommand{\bpm}{\begin{pmatrix}}
\newcommand{\epm}{\end{pmatrix}}
\newcommand{\bsm}{\left(\begin{smallmatrix}}
\newcommand{\esm}{\end{smallmatrix}\right)}
\newcommand{\T}{\top}
\newcommand{\ol}[1]{\overline{#1}}
\newcommand{\la}{\langle}
\newcommand{\ra}{\rangle}
\newcommand{\veps}{\varepsilon}
\newcommand{\w}{\omega}
\newcommand{\gdw}{\Leftrightarrow}
\newcommand{\eins}{\mathbb{1}}
\DeclareMathOperator{\Diag}{Diag}
\DeclareMathOperator{\rint}{rint}
\DeclareMathOperator{\argmax}{arg max}
\DeclareMathOperator{\supp}{supp}
\DeclareMathOperator{\Exp}{Exp}
\DeclareMathOperator{\im}{im}
\renewcommand{\vec}{\operatorname{vec}}
\newcommand{\GL}{\mathrm{GL}}
\newcommand{\D}{\mathrm{d}}
\newcommand{\Wstar}{\overline{\mathcal{W}}^*}
\newcommand{\baryW}{\mathbb{1}_{\mathcal{W}}}
\begin{document}
\title{Assignment Flows for Data Labeling on Graphs: Convergence and Stability\thanks{This work has also been stimulated by the Heidelberg Excellence Cluster STRUCTURES, funded by the DFG under Germany's Excellent Strategy EXC-2181/1 - 390900948.}\thanks{This version of the article has been accepted for publication, after peer review but is not the Version of Record and does not reflect post-acceptance improvements, or any corrections. The Version of Record is available online at: \url{http://dx.doi.org/10.1007/s41884-021-00060-8}.}}
\author{Artjom Zern \and Alexander Zeilmann \and Christoph Schn\"{o}rr}
\authorrunning{A.~Zern \and A.~Zeilmann \and C.~Schn\"{o}rr}
\institute{Artjom Zern, Alexander Zeilmann, Christoph Schn\"{o}rr \at Image and Pattern Analysis Group \\ Heidelberg University, Germany \\ \email{artjom.zern@gmail.com} %
}
\date{Received: date / Accepted: date}

\maketitle

\begin{abstract}
The assignment flow recently introduced in the \textit{J.~Math.~Imaging and Vision} 58/2 (2017) constitutes a high-dimensional dynamical system that evolves on a statistical product manifold and performs contextual labeling (classification) of data given in a metric space.
Vertices of an underlying corresponding graph index the data points and define a system of neighborhoods.
These neighborhoods together with nonnegative weight parameters define the regularization of the evolution of label assignments to data points, through geometric averaging induced by the affine e-connection of information geometry.
From the point of view of evolutionary game dynamics, the assignment flow may be characterized as a large system of replicator equations that are coupled by geometric averaging.

This paper establishes conditions on the weight parameters that guarantee convergence of the continuous-time assignment flow to integral assignments (labelings), up to a negligible subset of situations that will not be encountered when working with real data in practice.
Furthermore, we classify attractors of the flow and quantify corresponding basins of attraction.
This provides convergence guarantees for the assignment flow which are extended to the discrete-time assignment flow that results from applying a Runge-Kutta-Munthe-Kaas scheme for the numerical geometric integration of the assignment flow.
Several counter-examples illustrate that violating the conditions may entail unfavorable behavior of the assignment flow regarding contextual data classification.
\keywords{assignment flow \and image and data labeling \and replicator equation \and evolutionary game dynamics \and information geometry}
\subclass{34B45 \and 34C40 \and 62H35 \and 68U10 \and 91A22}
\end{abstract}

\maketitle
\tableofcontents


\section{Introduction}\label{sec:Introduction}
\subsection{Problem and Motivation}

Metric \textit{data labeling} denotes the task to assign to each data point of a given finite set $\mc{F}_{I}=\{f_{i}\colon i\in I\}\subset \mc{F}$ in a metric space $(\mc{F},d_{\mc{F}})$ a unique \textit{label} (a.k.a.~\textit{prototype}, \textit{class representative}) from another given set $\mc{F}^{\ast}_{J}=\{f^{\ast}_{j}\colon j\in J\}\subset\mc{F}$. The data indices $i\in I$ typically refer to positions $x_{i} \in \R^{d}$ in space of in space-time $[0,T]\times\R^{d}$. Accordingly, one associates with the data a graph $G=(I,E)$ where the set of nodes $I$ indexes the data and the edge set $E\subset I\times I$ represents a neighborhood system. A basic example is provided by the data of a digital image observed on a regular grid graph $G$ in which case the data space $\mc{F}$ may be a color space, a high-dimensional Euclidean space like in multispectral imaging, or the positive-definite matrix manifold like in diffusion tensor medical imaging.

Data labeling provides a dramatic reduction of given data as Figure \ref{fig:motivation} illustrates. In addition, it is a crucial step for data \textit{interpretation}. Basic examples include the analysis of traffic scenes \cite{Cordts:2016aa}, of medical images or of satellite images in remote sensing.

\begin{figure}[htpb]
\begin{center}
\begin{tabular}{ccc}
\includegraphics[width=0.29\textwidth]{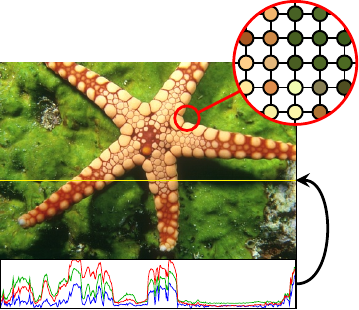} &
\includegraphics[width=0.29\textwidth]{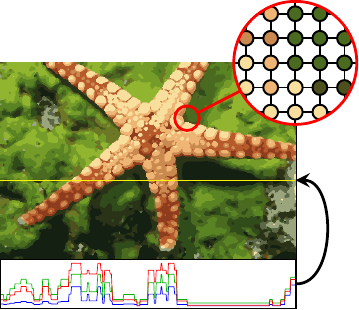} &
\includegraphics[width=0.29\textwidth]{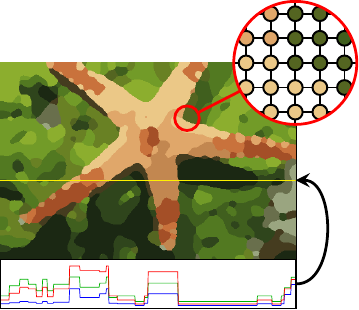}
\end{tabular}
\end{center}
\caption{
Data labeling on a graph through the assignment flow: Values from a finite set (so-called \textit{labels}) are assigned to a given vector-valued function so as to preserve its spatial structure on a certain spatial scale. \textsc{left:} Input data. \textsc{center:} Data labeled at a fine spatial scale. \textsc{right:} Data labeled at a coarser spatial scale. Scale is determined by the size $|\mc{N}_{i}|,\,i \in I$ of neighborhoods $\mc{N}_{i}$~\eqref{eq:def-Ni} that couple the individual dynamics~\eqref{eq:AF-i} (see Section~\ref{sec:AF0}). Here, uniform weight parameters $\Omega$~\eqref{eq:def-Omega} were used.
} \label{fig:motivation}
\end{figure}

The assignment flow approach \cite{Astroem2017} provides a mathematical framework for the design of dynamical systems that perform metric data labeling. This approach replaces established variational methods to image segmentation \cite{Chan:2006aa} as well as discrete Markov random fields for image labeling \cite{Kappes:2015aa} by smooth dynamical systems that facilitate the design of hierarchical systems for large-scale numerical data analysis. In addition, it can be extended to unsupervised scenarios \cite{Zern:2020ab} where the labels $\mc{F}_{J}^{\ast}$ can be adapted to given data or even learned from the data itself \cite{Zisler:2020aa}. We refer to the survey \cite{Schnorr:2020aa} for further discussion and related work.

Interpretation of data is generally not possible without an inductive bias towards prior expectations and application-specific knowledge. In connection with image labeling, such knowledge is represented by regularization parameters that influence label assignments by controlling the assignment flow. Figure \ref{fig:uniform-versus-weighted} provides an illustration. Nowadays, such parameters are learned directly from data. Due to the inherent smoothness, assignment flows can be conveniently used to accomplish this machine learning task \cite{Huhnerbein:2021th,Zeilmann:2021wt,Zeilmann:2021ul}.

\begin{figure}[htpb]
\centering
    \begin{tabular}{c@{\hskip2pt}c@{\hskip2pt}c@{\hskip2pt}c}
    	\includegraphics[width=0.23\textwidth]{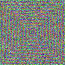}&
	\includegraphics[width=0.23\textwidth]{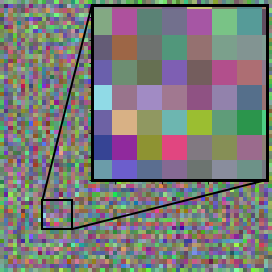}&
	\includegraphics[width=0.23\textwidth]{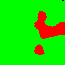}&
	\includegraphics[width=0.23\textwidth]{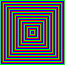}\\
{\scriptsize (a) noisy input image} &
{\scriptsize (b) close-up view} &
\makecell{\scriptsize (c) labeling with \\ \scriptsize \textit{uniform} weights} &
\makecell{\scriptsize (d) labeling with \\ \scriptsize \textit{adaptive} weights}
    \end{tabular}
\caption{
The assignment flow depends on the parameters $\Omega$~\eqref{eq:def-Omega}. (a),(b): Input data corrupted with noise. (c) Labeling with uniform weights. (d) Labeling with nonuniform weights  removes the noise, exploits spatial context and determines correct label assignments. Stability and asymptotic behavior of the assignment flow based on feasible parameters $\Omega$ are studied in this paper.
}
\label{fig:uniform-versus-weighted}
\end{figure}

From a more distant point of view, deep networks and learning \cite{Goodfellow:2016aa} prevail in machine learning. Besides their unprecedented performance in applications, current deep learning architectures are also known to be susceptible to data perturbations leading to unpredictable erroneous outputs \cite{Elad:2017aa,Finlayson:2019aa,Heaven:2019aa}. Our aim, therefore, is to prove stability properties of assignment flows under suitable assumptions on the regularization parameters, together with the guarantee that labelings, i.e.~\textit{integral} assignments, are computed for any data at hand. 

Section \ref{sec:Objectives} further details the scope of this paper after introducing the assignment flow approach in the next section.

\subsection{Assignment Flow}\label{sec:AF0}
The \textit{assignment flow} has been introduced by~\cite{Astroem2017} for the labeling of arbitrary data given on a graph $G=(I,E)$. 
It is defined by the system of nonlinear ODEs
\begin{equation}\label{eq:AF-i}
\dot W_{i} = R_{W_{i}} S_{i}(W),\qquad
W_{i}(0) = \tfrac{1}{n} \eins_n,\quad i \in I,
\end{equation}
whose solutions $W_i(t)$ evolve on the elementary Riemannian manifold $(\mc{S},g)$ given by the relative interior $\mc{S}=\rint(\Delta_{n})$ of the probability simplex
\begin{equation}
\Delta_{n} = \Big\{p \in \R^{n} \colon \sum_{j=1}^{n} p_{j} = \la\eins_{n},p\ra = 1,\ p \geq 0 \Big\}.
\end{equation}
Here, $n = |J|$ denotes the number of labels and $\eins_n = (1,\dotsc,1)^{\T} \in \R^n$ is the vector of ones.
The tangent space of $\mc{S}$ at any point $p \in \mc{S}$ is given by
\begin{equation}\label{eq:def-T0}
T_{0} = \{v \in \R^{n} \colon \la\eins_n,v\ra = 0\},
\end{equation}
and the Riemannian structure on $\mc{S}$ is defined by the Fisher-Rao metric
\begin{equation} \label{eq:def-FR}
g_{p}(u,v) = \sum_{j=1}^n \frac{u_{j} v_{j}}{p_{j}},\quad p \in \mc{S},\quad
u,v \in T_{0}.
\end{equation}

The basic idea underlying the approach~\eqref{eq:AF-i} is that each vector $W_{i}(t)$ converges within $\mc{S}$ to an $\veps$-neighborhood of some vertex (unit vector) $e_{j}$ of $\Delta_{n}$, that is
\begin{equation}
\forall \veps > 0\colon\qquad \|W_{i}(T)-e_{j}\| \leq
\veps,
\end{equation}
for sufficiently large $T = T(\veps)>0$. This enables to assign a unique label (class index) $j$ to the data point observed at vertex $i \in I$ by trivial rounding:
\begin{equation}\label{eq:local-rounding}
j = \underset{l \in \{1,\dotsc,n\}}{\argmax}~W_{i l}.
\end{equation}

In the following, we give a complete definition of the vector field defining the assignment flow \eqref{eq:AF-i}.
The linear mapping $R_{W_{i}}$ of~\eqref{eq:AF-i} will be called \textit{replicator matrix}. 
It is defined by
\begin{equation}\label{eq:def-Rp}
R_{p} \colon \R^{n} \to T_{0},\quad
R_{p} = \Diag(p)-p p^{\T},\quad p \in \mc{S}.
\end{equation}
Regarding the orthogonal projection onto $T_0$ given by
\begin{equation}\label{eq:def-Pi0-T0}
\Pi_{0} \colon \R^{n} \to T_{0},\qquad
\Pi_{0} = I_{n}-\tfrac{1}{n}\eins_{n}\eins_{n}^{\T}
\end{equation}
with $I_n$ denoting the identity matrix, the replicator matrix satisfies
\begin{equation}\label{eq:Rp-Pi0}
R_{p} = R_{p}\Pi_{0} = \Pi_{0} R_{p},\qquad \forall p \in \mc{S}.
\end{equation}
Further, we will use the exponential map and its inverse
\begin{subequations}\label{eq:def-exp-invexp}
\begin{align}
\exp_{p} &\colon \R^{n} \to \mc{S}, &
\exp_{p}(v) &= \frac{p e^{v}}{\la p, e^{v}\ra},\quad
p \in \mc{S},
\label{eq:def-exp} \\ \label{eq:def-invexp}
\exp_{p}^{-1} &\colon \mc{S} \to T_{0}, &
\exp_{p}^{-1}(q) &= \Pi_{0}\log\frac{q}{p},
\end{align}
\end{subequations}
where multiplication, division, exponentiation and logarithm of vectors is meant componentwise.
We call this map `exponential' for simplicity. In fact, definition~\eqref{eq:def-exp} is the explicit expression of the relation
\begin{equation}
\exp_{p} = \Exp_{p} \circ R_{p},
\end{equation}
where $\Exp \colon \mc{S} \times T_{0} \to \mc{S}$ is the exponential map corresponding to the affine e-connection of information geometry; see~\cite{Amari:2000aa,Ay2017} and~\cite{Schnorr:2020aa} for details. A straightforward calculation shows that the differential of $\exp_{p}$ at $v$ is
\begin{equation}\label{eq:dexp}
d\exp_{p}(v) = R_{\exp_{p}(v)},
\end{equation}
where the right-hand side is defined by~\eqref{eq:def-Rp} and~\eqref{eq:def-exp}.

The behavior of the assignment flow~\eqref{eq:AF-i}, essentially rests upon the \textit{coupling} of the local systems through the mappings $S_{i}$ within local neighborhoods
\begin{equation}\label{eq:def-Ni}
\mc{N}_{i} = \{i\} \cup \{k \in I \colon i \sim k\},\qquad i \in I,
\end{equation}
corresponding to the adjacency relation $E \subseteq I \times I$ of the underlying graph $G$. 
These couplings are parameterized by nonnegative \textit{weights}
\begin{equation}\label{eq:def-Omega}
\Omega = \{\w_{ik}\}_{k \in \mc{N}_{i}, i \in I}.
\end{equation}

Considering the \textit{assignment manifold}
\begin{equation}\label{eq:def-mcW}
\mc{W} = \mc{S} \times\dotsb\times \mc{S}, \qquad(\text{$|I|$ times})
\end{equation}
the \textit{similarity map} $S \colon \mc{W} \to \mc{W}$ is defined by
\begin{subequations}\label{eq:def-Si}
\begin{align}
S_{i}\colon \mc{W} \to \mc{S},\qquad
S_{i}(W) &= \Exp_{W_{i}}\Big(\sum_{k \in \mc{N}_{i}} \w_{ik} \Exp_{W_{i}}^{-1}\big(L_{k}(W_{k})\big)\Big), \quad i \in I
\label{eq:def-Si-a} \\ \label{eq:def-Li}
L_{i} \colon \mc{S} \to \mc{S},\qquad
L_{i}(W_{i}) &= \exp_{W_{i}}(-D_{i}),\quad i \in I.
\end{align}
\end{subequations}
It regularizes the assignment vectors $W_{i} \in \mc{S}$ depending on the parameters~\eqref{eq:def-Omega}, for given input data in terms of distance vectors $D_{i} \in \R^n$ storing the distances $D_{ij} = d_{\mc{F}}(f_i, f_j^*)$ between data points $f_i \in \mc{F}_{I}$ and prototypes $f_j^* \in \mc{F}_{J}^*$.
Denoting the barycenter of $\mc{S}$ with $\eins_{\mc{S}} = \frac{1}{n} \eins_{n}$, the defining relation~\eqref{eq:def-Si-a} can be rewritten in the form~\cite[Lemma 3.2]{Savarino:2021wt}
\begin{equation} \label{eq:def-Si-b}
S_{i}(W) = \exp_{\eins_{\mc{S}}}\Big(\sum_{k \in \mc{N}_{i}}\w_{ik}\big(\exp_{\eins_{\mc{S}}}^{-1}(W_{k})-D_{k}\big)\Big),\quad i \in I.
\end{equation}

In view of~\eqref{eq:def-mcW}, all the mappings in \eqref{eq:def-Rp}, \eqref{eq:def-exp-invexp} and \eqref{eq:def-Si} naturally generalize from $\mc{S}$ to $\mc{W}$ and from $T_{0}$ given by~\eqref{eq:def-T0} to
\begin{equation}
\mc{T}_{0} = T_{0} \times \dotsb \times T_{0}. \qquad(\text{$|I|$ times})
\end{equation}
For example,
\begin{equation}
\exp_{W}(V) = \big(\exp_{W_{1}}(V_{1}),\dotsc,\exp_{W_{|I|}}(V_{|I|})\big)^{\T}.
\end{equation}
We also denote the barycenter of $\mc{W}$ with $\eins_{\mc{W}} = ( \eins_{\mc{S}}, \dotsc, \eins_{\mc{S}})^\T$.
Accordingly, collecting all equations of~\eqref{eq:AF-i}, the assignment flow reads
\begin{equation}\label{eq:AF}
\dot W = R_{W} S(W),\qquad W(0)=\eins_{\mc{W}}.
\end{equation}

\subsection{Objectives}\label{sec:Objectives}
The first goal of this paper is to analyze the asymptotic behavior of the assignment flow~\eqref{eq:AF-i} depending on the parameters~\eqref{eq:def-Omega}.
It was conjectured~\cite[Conjecture 1]{Astroem2017} that, for data in `general position' as they are typically observed in real scenarios (e.g.\ no symmetry due to additive noise), the assignment flow converges to an integral labeling at every pixel, as described above in connection with~\eqref{eq:local-rounding}.
We confirm this conjecture in this paper under suitable assumptions on the parameters $\Omega$.
To this end, we use a reparametrization of the assignment flow and clarify the convergence of the reparameterized flow to equilibria and their stability.

The second goal of this paper concerns the same question regarding the \textit{time-discrete} assignment flow that is generated by a scheme for numerically integrating~\eqref{eq:AF-i}. Depending on what scheme is chosen, properties of the resulting flow may differ from properties of the \textit{time-continuous} flow~\eqref{eq:AF-i}. Indeed, the authors of~\cite{Astroem2017} adopted a numerical scheme from~\cite{losert1983dynamics} which, when adapted and applied to~\eqref{eq:AF-i}, was shown in~\cite{bergmann2017iterative} to always converge to a constant solution, i.e.\ a \textit{single} label is assigned to every pixel no matter which data are observed. Even though numerical experiments strongly indicate that this undesirable asymptotic behavior is irrelevant in practice, because it only occurs when $W(t_{k})$ is so close to the boundary of the closure of the underlying domain such that it cannot be reproduced with the usual machine accuracy, such behavior---nonetheless---is unsatisfactory from the mathematical viewpoint.

In this paper, therefore, we consider the simplest numerical scheme that was recently devised and studied in~\cite{Zeilmann:2018aa}, to better take into account the geometry underlying the assignment flow~\eqref{eq:AF-i} than the numerical scheme adopted in~\cite{Astroem2017}.  We show under suitable assumptions on the parameters $\Omega$, that the time-discrete assignment flow generated by such a proper numerical scheme cannot exhibit the pathological asymptotic behavior mentioned above.

\subsection{Related Work}
The assignment flow approach emerged from classical methods (variational methods, discrete Markov random fields) to image segmentation and labeling. We refer to \cite{Schnorr:2020aa} for further discussion. The approach can take into account any differentiable data likelihood, and all discrete decisions like the formation of spatial regions at a certain scale are done by integrating the flow numerically. The inherent smoothness of the approach compares favorably to discrete schemes for image segmentation, like region growing schemes \cite{Nock:2004vj}, in particular regarding the  learning of parameters for incorporating prior knowledge. In particular, spatial regularization can be performed independently of the metric model of the data at hand. This is not the case for segmentation based on spectral clustering \cite{Shi:2000aa} as discussed in detail and demonstrated by \cite{Zisler:2020aa}.

From a more distant viewpoint, our results may be also of interest in the field of evolutionary game dynamics~\cite{Hofbauer:2003aa,Sandholm:2010aa}. The corresponding basic dynamical system has the form
\begin{equation}\label{eq:replicator}
\dot p = p \big(f(p)-\EE_{p}[f(p)] \eins_{n}\big),\qquad p(0) \in \Delta_{n},
\end{equation}
where the first multiplication on the right-hand side is done componentwise, the expectation is given by $\EE_{p}[f(p)]=\la p, f(p)\ra$ and $p(t)$
evolves on $\Delta_{n}$.
The differential equation~\eqref{eq:replicator} is known as \textit{replicator equation}. It constitutes a Riemannian gradient flow with respect to the Fisher-Rao metric if $f = \nabla F$ derives from a potential $F$.
It is well known that depending on what `affinity function' $f \colon \Delta_{n} \to \R^{n}$ is chosen, a broad range of dynamics may occur, even for linear affinities $p \mapsto A p,\; A \in \R^{n \times n}$ (see e.g.~\cite{Bomze:2002aa}). Other choices give even rise to chaotic dynamics (see e.g.~\cite{Galla:2013aa}).
By comparison, the explicit form of Eq.~\eqref{eq:AF-i} reads
\begin{equation}\label{eq:dot-Wi-explicit}
\dot W_{i} = W_{i}\big(S_{i}(W)-\EE_{W_{i}}[S_{i}(W)]\eins_{n}\big),\qquad i \in I,
\end{equation}
where $S_{i}(W)$ \textit{couples} a possibly very large number $m = |I|$ of replicator equations of the form~\eqref{eq:dot-Wi-explicit}, as explained above in connection with~\eqref{eq:def-Omega}. The mapping $S_{i}$ does \textit{not} derive from a potential, however, but can be related to a potential after a proper reparametrization and under a symmetry assumption on the parameters~\eqref{eq:def-Omega}~\cite{Savarino:2021wt}. We refer to~\cite{Schnorr:2020aa} for a more comprehensive discussion of the background and further work related to the assignment flow~\eqref{eq:AF-i}.

\subsection{Organization}
The assignment flow and its basic properties (limit points, convergence, stability) are established in Section~\ref{sec:AF-Properties}. We briefly examine in Section~\ref{ssec:ConvergencePropertiesLAF} also properties of a simplified approximate version of the assignment flow, that can be linearly parametrized on the tangent space, which is convenient for data-driven estimation of suitable weight parameters~\cite{Huhnerbein:2021th}. In Section~\ref{sec:numerics}, we extend these results to the discrete-time assignment flow that is obtained by applying the simplest numerical scheme for geometric integration of the assignment flow, as worked out in~\cite{Zeilmann:2018aa}. Numerical examples demonstrate that violating the conditions established in Section~\ref{sec:AF-Properties} may lead to various behaviors of the assignment flow, all of which are unfavorable as regards data classification.
Some lengthy proofs have been relegated to Appendix~\ref{sec:Proofs}. We conclude in Section~\ref{sec:Conclusion}.

\subsection{Basic Notation}
We set $[n]=\{1,2,\dotsc,n\}$ for any $n \in \N$ and denote by $|S|$ the cardinality of any finite set $S$. Throughout this paper, $m$ and $n$ will denote the number of vertices of the underlying graph $G=(I,E)$ and the number of classes indexed by $J$, respectively,
\begin{equation}
m=|I|,\qquad n = |J|.
\end{equation}
The set $\mc{W} = \mc{S} \times \dots \times \mc{S}$ \eqref{eq:def-mcW} is called \textit{assignment manifold}, where $\mc{S} = \rint(\Delta_n)$ is the relative interior of the probability simplex $\Delta_n$.
$\mc{S}$ and $\mc{W}$, respectively, are equipped with the Fisher-Rao metric \eqref{eq:def-FR} and hence are Riemannian manifolds.
Points of $\mc{W}$ are row-stochastic matrices denoted by $W =(W_{1},\dotsc,W_{m})^{\T} \in \mc{W}$ with row vectors (also called subvectors) $W_{i} \in \mc{S},\, i \in I$ and with components $W_{ij},\, j \in J$.
The same notation is adopted for the image $S(W)$ of the mapping $S \colon \mc{W} \to \mc{W}$ defined by~\eqref{eq:def-Si}.
We denote the set of nonnegative reals by $\R_{\geq 0}$.
Parameters~\eqref{eq:def-Omega} form a matrix $\Omega \in \R_{\geq 0}^{m \times m}$. The subvectors of $\Omega S$ are denoted by $(\Omega S)_{i},\, i \in I$.

$\eins_n=(1,1,\dotsc,1)^{\T} \in \R^n$ denotes the vector with all entries equal to $1$ and $e_{i}=(0,\dotsc,0,1,0,\dotsc,0)^{\T}$ is the $i$th unit vector. 
The dimension of $e_{i}$ will be clear from the context. 
$\eins_{\mc{S}} = \frac{1}{n}\eins_{n}$ denotes the barycenter of $\mc{S}$ (uniform categorical distribution). Similarly, $\eins_{\mc{W}}$ with subvectors $(\eins_{\mc{W}})_{i}=\eins_{\mc{S}},\,i \in I$ denotes the barycenter of the assignment manifold $\mc{W}$.
$I_{n}$ denotes the identity matrix of dimension $n \times n$. 

The closure of $\mc{W}$ is denoted by
\begin{equation}\label{eq:def-olmcW}
\ol{\mc{W}}=\Delta_{n} \times\dotsb\times\Delta_{n}
\end{equation}
and the set of integral assignments (labelings) by
\begin{equation}\label{eq:def-mcWstar}
\Wstar = \ol{\mathcal{W}} \cap \{ 0, 1 \}^{m \times n}.
\end{equation}
Each subvector $W_{i}$ of a point $W \in \Wstar$ is a unit vector $W_{i}=e_{j}$ for some $j \in J$.

The support of a vector $v \in \R^n$ is denoted by $\supp(v) = \{ i \in [n] \colon v_i \neq 0 \}$. $\la x, y\ra$ denotes the Euclidean inner product of vectors $x, y$ and $\la A, B\ra=\tr(A^{\T} B)$ the inner product of matrices $A, B$. The spectral (or operator) norm of a matrix $A$ is denoted by $\|A\|_{2}$.
For two matrices of the same size, $A \odot B$ denotes the Hadamard (entry-wise) matrix product. For $A \in \R^{m \times n}$, $B \in \R^{p \times q}$, the matrix $A \otimes B \in \R^{mp \times nq}$ denotes the Kronecker product of matrices with submatrices $A_{ij} B \in \R^{p \times q},\; i \in [m],\, j \in [n]$ (cf.~e.g.~\cite{Van-Loan:2000aa}). $\mc{N}(A)$ and $\mc{R}(A)$ denote the nullspace and the range of the linear mapping represented by $A \in \R^{m \times n}$. For strictly positive vectors with full support, like $p \in \mc{S}$ with $\supp(p)=[n]$, the entry-wise division of a vector $v \in \R^{n}$ by $p$ is denoted by $\frac{v}{p}$. Likewise, we set $p v = (p_{1} v_{1},\dotsc,p_{n} v_{n})^{\T}$.
The exponential \textit{function} and the logarithm apply componentwise to vectors, i.e.\ $e^{v}=(e^{v_{1}},\dotsc,e^{v_{n}})^{\T}$ and $\log p = (\log p_{1},\dotsc,\log p_{n})^{\T}$. For large expression as arguments, we also write
\begin{equation}
e^{v} = \exp(v),
\end{equation}
which should not be confused with the exponential map~\eqref{eq:def-exp-invexp} that is always written with subscript.
$\Diag(p)$ denotes the diagonal matrix with the components of the vector $p$ on its diagonal.


\section{Properties of the Assignment Flow}\label{sec:AF-Properties}
\subsection{Representation of the Assignment Flow}\label{sec:AF}
The following parametrization of the assignment flow will be convenient for our analysis.
\begin{proposition}[$S$-parametrization {\cite[Proposition~3.6]{Savarino:2021wt}}]\label{prop:S-AF}
The assignment flow~\eqref{eq:AF} is equivalent to the system
\begin{subequations} \label{eq:prop_Sparam}
\begin{alignat}{2}
\dot{S} &= R_{S}(\Omega S), \qquad& S(0) &= \exp_{\baryW}(-\Omega D),
\label{eq:prop_sflow} \\ \label{eq:prop_wflow}
\dot{W} &= R_{W} S, \qquad &W(0) &= \baryW.
\end{alignat}
\end{subequations}
More precisely, $W(t),\,t \geq 0$ solves~\eqref{eq:AF} if and only if it solves~\eqref{eq:prop_Sparam}.
\end{proposition}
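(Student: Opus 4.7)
The proposition is essentially a chain-rule calculation, made possible by the very clean form~\eqref{eq:def-Si-b} of the similarity map. I will first derive the forward implication (any solution of~\eqref{eq:AF} induces a solution of~\eqref{eq:prop_Sparam}) and then invoke uniqueness of ODE solutions on the manifold $\mc{W}$ to obtain the converse.

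The starting point is to rewrite~\eqref{eq:def-Si-b} globally on $\mc{W}$ as
\begin{equation*}
S(W) = \exp_{\baryW}\bigl(\Omega\,\exp_{\baryW}^{-1}(W)-\Omega D\bigr),
\end{equation*}
where the action of $\Omega$ on the matrix $\exp_{\baryW}^{-1}(W)\in\mc{T}_{0}$ is row-mixing and therefore commutes with the rowwise projection $\Pi_{0}$. Note also that $\exp_{\baryW}^{-1}(W)=\Pi_{0}\log W$, since $\log\baryW$ is a multiple of $\eins_{n}$ in every row and $\Pi_{0}\eins_{n}=0$.

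Now suppose $W(t)$ solves~\eqref{eq:AF} and define $S(t):=S(W(t))$. The formula~\eqref{eq:dexp} for the differential of $\exp_{p}$ yields, row by row,
\begin{equation*}
\dot{S}
= R_{S}\,\frac{d}{dt}\bigl(\Omega\,\Pi_{0}\log W-\Omega D\bigr)
= R_{S}\,\Omega\,\Pi_{0}\,\frac{\dot{W}}{W}.
\end{equation*}
Substituting $\dot{W}=R_{W}S$, a direct calculation using $R_{W_{i}}S_{i}=W_{i}\odot S_{i}-\la W_{i},S_{i}\ra W_{i}$ shows that
\begin{equation*}
\frac{\dot{W}_{i}}{W_{i}}=S_{i}-\la W_{i},S_{i}\ra\,\eins_{n},
\end{equation*}
so that $\Pi_{0}(\dot{W}_{i}/W_{i})=\Pi_{0}S_{i}$. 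Plugging this back and applying~\eqref{eq:Rp-Pi0} (i.e.\ $R_{p}\Pi_{0}=R_{p}$) together with the commutativity of $\Omega$ and $\Pi_{0}$ gives
\begin{equation*}
\dot{S}=R_{S}\,\Omega\,\Pi_{0}S=R_{S}\,\Pi_{0}(\Omega S)=R_{S}(\Omega S),
\end{equation*}
which is~\eqref{eq:prop_sflow}. The initial condition $S(0)=\exp_{\baryW}(-\Omega D)$ follows immediately since $\Pi_{0}\log\baryW=0$.

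For the reverse direction, suppose $(W(t),S(t))$ solves~\eqref{eq:prop_Sparam}. Let $\widetilde{W}(t)$ be the (unique) solution of~\eqref{eq:AF} and set $\widetilde{S}(t):=S(\widetilde{W}(t))$. By the forward direction, $(\widetilde{W},\widetilde{S})$ solves~\eqref{eq:prop_Sparam} with the same initial data as $(W,S)$. Since the right-hand side of~\eqref{eq:prop_Sparam} is smooth in $(W,S)$ on the product manifold $\mc{W}\times\mc{W}$, Picard--Lindel\"of uniqueness forces $(W,S)\equiv(\widetilde{W},\widetilde{S})$ and in particular $W=\widetilde{W}$ solves~\eqref{eq:AF}. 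The only non-routine step is verifying the commutation $\Omega\Pi_{0}=\Pi_{0}\Omega$ carefully (it is immediate once one observes that $\Omega$ acts on row index while $\Pi_{0}$ acts on label index); everything else is chain rule and the identities~\eqref{eq:dexp} and~\eqref{eq:Rp-Pi0}. No genuine obstacle is expected.
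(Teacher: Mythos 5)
Your proof is correct. Note that the paper itself gives no proof of Proposition~\ref{prop:S-AF} --- it defers entirely to \cite[Proposition~3.6]{Savarino:2021wt} --- so there is no in-paper argument to compare against; your derivation is a valid, self-contained substitute and follows the standard route: setting $S(t)=S(W(t))$ with $S(W)=\exp_{\baryW}\bigl(\Omega\Pi_{0}\log W-\Omega D\bigr)$ from~\eqref{eq:def-Si-b}, applying~\eqref{eq:dexp}, using $\Pi_{0}(\dot W_{i}/W_{i})=\Pi_{0}S_{i}$ together with $R_{S}\Pi_{0}=R_{S}$ from~\eqref{eq:Rp-Pi0} and the (row-index vs.\ label-index) commutation $\Omega\Pi_{0}=\Pi_{0}\Omega$, and then closing the converse by Picard--Lindel\"of uniqueness for the triangular system~\eqref{eq:prop_Sparam}. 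All steps check out; the only point worth making explicit is that the componentwise division $\dot W_{i}/W_{i}$ is legitimate because $W(t)$ remains in $\mc{S}$ (full support), which the paper's invariance discussion guarantees.
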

The difference between~\eqref{eq:AF} and~\eqref{eq:prop_Sparam} is that the latter representation separates the dependencies on the data $D$ and the assignments $W$: The given data $D$ completely determines $S(t)$ through the initial condition of~\eqref{eq:prop_sflow}, and $S(t)$ completely determines the assignments $W(t)$ by~\eqref{eq:prop_wflow}.
In what follows, our focus will be on how the parameters $\Omega$ affect $S(t)$ and $W(t)$.
\begin{remark}[$S$-flow]
We call \textit{$S$-flow} system~\eqref{eq:prop_sflow} and its solution $S(t)$ in the remainder of this paper and use the short-hand $F$ for the vector field, i.e.~
\begin{equation}\label{eq:def-S-flow-F}
\dot S = F(S) = R_{S}(\Omega S),\qquad S(0) = S_{0} \in \mc{W}.
\end{equation}
\end{remark}

A direct consequence of the parametrization~\eqref{eq:prop_Sparam} is the following.
\begin{proposition} \label{prop:W-from-S}
Let $S(t),\; t \geq 0$ solve~\eqref{eq:prop_sflow}. Then the solution to~\eqref{eq:prop_wflow} is given by
\begin{equation} \label{eq:W-from-S}
W(t) = \exp_{\baryW} \left( \int_{0}^{t} S(\tau) \,\D\tau \right) = \exp_{\baryW} \left( \int_{0}^{t} \Pi_{0}S(\tau) \,\D\tau \right).
\end{equation}
\end{proposition}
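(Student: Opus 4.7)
The plan is to verify the claimed formula by direct differentiation, exploiting the known differential of the exponential map $\exp_p$ from equation~\eqref{eq:dexp}, together with the invariance of $\exp_p$ under shifts along $\eins_n$.

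First I would define $V(t) = \int_0^t S(\tau)\,\D\tau$ and set $\widetilde{W}(t) = \exp_{\baryW}(V(t))$, understanding this componentwise on $\mc{W}$. The initial condition is immediate since $V(0) = 0$ and $\exp_{\baryW}(0) = \baryW$. Differentiating via the chain rule yields
\begin{equation*}
\dot{\widetilde{W}}(t) = d\exp_{\baryW}\!\bigl(V(t)\bigr)\bigl[\dot V(t)\bigr] = R_{\exp_{\baryW}(V(t))}\, S(t) = R_{\widetilde{W}(t)}\, S(t),
\end{equation*}
where the second equality uses~\eqref{eq:dexp} applied on each factor of $\mc{W}$, and $\dot V = S$. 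Hence $\widetilde{W}$ solves the same Cauchy problem~\eqref{eq:prop_wflow} as $W$, and uniqueness for this ODE on $\mc{W}$ (a smooth vector field) gives $W = \widetilde{W}$, establishing the first equality.

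For the second equality, I would observe that $\exp_p$ is invariant under translation of its argument by any multiple of $\eins_n$: from the explicit formula~\eqref{eq:def-exp},
\begin{equation*}
\exp_p(v + c\,\eins_n) = \frac{p\, e^{v+c\eins_n}}{\la p, e^{v+c\eins_n}\ra} = \frac{e^{c}\, p\, e^{v}}{e^{c}\la p, e^{v}\ra} = \exp_p(v)
\end{equation*}
for every $c \in \R$. Since $v - \Pi_0 v = \frac{1}{n}\la \eins_n, v\ra \eins_n$ is such a multiple by~\eqref{eq:def-Pi0-T0}, this gives $\exp_p(v) = \exp_p(\Pi_0 v)$ for all $v \in \R^n$, which extends component-wise to $\mc{W}$. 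Applying this with $v = \int_0^t S(\tau)\,\D\tau$ yields the stated second representation of $W(t)$, so the proof is complete.

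The only mildly delicate point is to justify commuting $\Pi_0$ with the integral — trivial by linearity — and to note that no step requires additional smoothness beyond what is inherited from continuity of $S(\tau)$ on compact time intervals; I do not anticipate any genuine obstacle.
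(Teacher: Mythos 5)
Your proof is correct and follows essentially the same route as the paper: differentiate the candidate $\exp_{\baryW}\bigl(\int_0^t S\,\D\tau\bigr)$ using $d\exp_p(v)=R_{\exp_p(v)}$ from~\eqref{eq:dexp} and conclude by uniqueness of the solution to~\eqref{eq:prop_wflow}. The only cosmetic difference is in the second equality, which you justify via the shift-invariance $\exp_p(v+c\eins_n)=\exp_p(v)$ rather than via $R_p=R_p\Pi_0$ from~\eqref{eq:Rp-Pi0} as the paper does; both are one-line and equivalent.
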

\begin{proof}
Set $I_{S}(t)=\int_{0}^{t} S(\tau)\dd\tau$. Then $W(t) = \exp_{\eins_{\mc{W}}}\big(I_{S}(t)\big)$ and
\begin{equation}
\dot W(t) = d\exp_{\eins_{\mc{W}}}\big(I_{S}(t)\big)\big[\dot I_{S}(t)\big] \overset{\eqref{eq:dexp}}{=} R_{\exp_{\eins_{\mc{W}}}(I_{S}(t))}\big(\dot I_{S}(t)\big)
= R_{W(t)}\big(S(t)\big).
\end{equation}
The second equation of~\eqref{eq:W-from-S} follows from the first equation of~\eqref{eq:Rp-Pi0}.\qed
\end{proof}
%

Transferring the assignment flow~\eqref{eq:AF} to the tangent space $\mc{T}_{0}$
and linearizing the ODE leads to the linear assignment flow~\cite[Prop.~4.2]{Zeilmann:2018aa}
\begin{alignat}{1}\label{eq:LAF}
    \dot{V} &= R_{\widehat{S}}(\Omega V) + B, \quad V(0) = 0, \quad V \in \mc{T}_0,
\end{alignat}
with fixed $\widehat{S} \in \mc{W}$ and $B \in \mc{T}_{0}$.

We note that both the $S$-flow~\eqref{eq:def-S-flow-F} and the linear assignment flow~\eqref{eq:LAF} are defined by similar vector fields on the tangent space $\mc{T}_{0}$.
Ignoring the constant term $B$ in~\eqref{eq:LAF}  that can be represented by
using a corresponding initial point (see Lemma~\ref{lmm:additive-to-initial}), the difference concerns the parameters $S$ and $\widehat S$
of the replicator matrix:
In the linear assignment flow, this parameter $\widehat S$ is fixed, whereas in the $S$-flow,
it changes with the flow.
Notice that `linear' refers to the linearity of the ODE~\eqref{eq:LAF} on the tangent space.
The corresponding lifted flow~\eqref{eq:LAF-lifted} on the assignment manifold
is still nonlinear (cf.~\cite[Def.~4.1]{Zeilmann:2018aa}).

Convergence properties of the $S$-flow and the linear assignment flow are
analyzed in the following sections.

\subsection{Existence and Uniqueness}
We establish global existence and uniqueness of both the $S$-flow and the assignment flow and examine to what extent the former determines the latter.
\begin{proposition}[global existence and uniqueness]
The solutions $W(t), S(t)$ to~\eqref{eq:prop_Sparam} are unique and globally exist for $t \geq 0$.
\end{proposition}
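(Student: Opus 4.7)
The plan is to decouple the two ODEs in~\eqref{eq:prop_Sparam} and handle them separately. By construction, the $S$-flow~\eqref{eq:prop_sflow} does not depend on $W$, so I would first establish global existence and uniqueness of $S(t)$, and then obtain $W(t)$ from $S(t)$ via the explicit formula of Proposition~\ref{prop:W-from-S}.

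For the $S$-flow, the vector field $F(S) = R_{S}(\Omega S)$ is polynomial in the entries of $S$ (see~\eqref{eq:def-Rp} together with~\eqref{eq:def-S-flow-F}), hence smooth and locally Lipschitz on $\R^{m \times n}$. Picard--Lindelöf therefore yields a unique local solution starting from any initial value, in particular from $S(0)=\exp_{\baryW}(-\Omega D) \in \mc{W}$. The next step is forward-invariance of $\mc{W}$ under the flow. The simplex constraint $\langle \eins_n, S_i\rangle = 1$ is preserved because $R_{S_i}$ takes values in $T_0$ (cf.~\eqref{eq:Rp-Pi0}), so $\frac{d}{dt}\langle\eins_n,S_i\rangle = 0$. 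The positivity constraint is preserved because the $j$-th component of $R_{S_i}(\Omega S)_i$ factors as $S_{ij}\bigl((\Omega S)_{ij} - \langle S_i,(\Omega S)_i\rangle\bigr)$; thus the faces $\{S_{ij}=0\}$ of $\overline{\mc{W}}$ are invariant, and by uniqueness no trajectory starting in $\mc{W}$ can reach the boundary in finite time. Consequently $S(t) \in \mc{W} \subset \overline{\mc{W}}$, which is compact, so the solution cannot blow up and the local solution extends to all $t \geq 0$.

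Given this global $S(t)$, Proposition~\ref{prop:W-from-S} supplies the closed-form expression
\begin{equation}
W(t) = \exp_{\baryW}\!\left(\int_{0}^{t} S(\tau)\,\D\tau\right),
\end{equation}
which is defined for every $t \geq 0$ since the integrand is continuous and bounded. It solves~\eqref{eq:prop_wflow} by the chain rule computation already spelled out in the proof of Proposition~\ref{prop:W-from-S}, and uniqueness follows from Picard--Lindelöf applied to~\eqref{eq:prop_wflow} with the now-known continuous input $S(t)$ (the right-hand side $W \mapsto R_W S(t)$ is smooth in $W$).

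The only real obstacle is the invariance argument, i.e.\ ruling out that a trajectory leaves $\mc{W}$ through the relative boundary of $\overline{\mc{W}}$. I expect to dispatch this cleanly by noting that the replicator form of $F$ causes the vector field to be tangent to every face of $\overline{\mc{W}}$, so each face is itself invariant, and uniqueness then forbids trajectories starting in the relative interior $\mc{W}$ from touching it. Everything else is a routine application of Picard--Lindelöf plus compactness of $\overline{\mc{W}}$ to rule out finite-time blow-up.
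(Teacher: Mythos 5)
Your proposal is correct and follows essentially the same route as the paper: invariance of the hyperplanes $\{\sum_j S_{ij}=1\}$ and of the faces $\{S_{ij}=0\}$ keeps $S(t)$ in $\mc{W}\subset\ol{\mc{W}}$, compactness rules out finite-time blow-up, $W(t)$ is then obtained globally from the explicit formula of Proposition~\ref{prop:W-from-S}, and uniqueness follows from local Lipschitz continuity of the right-hand sides. The paper simply compresses the invariance and no-escape-through-the-boundary argument into a citation, whereas you spell it out; the substance is the same.
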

\begin{proof}
The hyperplanes $\{ S \colon \sum_{j} S_{ij} = 1 \}$ for $i \in I$ and $\{ S \colon S_{ij} = 0 \}$ for $i \in I$, $j \in J$ are invariant with respect to the flow~\eqref{eq:def-S-flow-F}.
Hence, $S(t)$ stays in $\mathcal{W} \subset \ol{\mathcal{W}}$ (cf.~\cite{Hofbauer:2003aa}) and therefore exists for all $t \in \R$ by~\cite[Corollary 2.16]{Teschl:2012aa}.
Equation~\eqref{eq:W-from-S} then implies the existence of $W(t)$ for all $t \in \R$.
The uniqueness of the solutions follow by the local Lipschitz continuity of the right-hand
side of~\eqref{eq:def-S-flow-F} and~\eqref{eq:AF}, respectively.\qed
\end{proof}
\begin{remark}$\text{ }$\label{rem:replicator}
\begin{enumerate}[(a)]
\item It is clear in view of the representation~\eqref{eq:prop_Sparam} that the domain $\mc{W}$ of the $S$-flow and consequently the domain of the assignment flow, too, can be extended to $\ol{\mc{W}}$, and we henceforth assume this to be the case.
Furthermore, the domain of the $S$-flow can be extended to an open set $U$ with $\ol{\mc{W}} \subset U \subseteq \R^{m \times n}$. In the latter case, although the existence for all $t \geq 0$ is no longer guaranteed, this simplifies the stability analysis of equilibria $S^* \in \ol{\mc{W}}$, as we will see in Section~\ref{sec:convergence-stability}.
\item The assignment flow shares with replicator equations in general (cf.~\cite{Hofbauer:2003aa}) that it is invariant with respect to the boundary $\partial\ol{\mc{W}}$: Due to the multiplication with $R_{S}$ and $R_{W}$, respectively, both $S(t)$ and $W(t)$ cannot leave the corresponding facet of $\partial\ol{\mc{W}}$ whenever they reach it.
\end{enumerate}
\end{remark}
Next, we examine what convergence of $S(t)$ close to $\partial\ol{\mc{W}}$ implies for $W(t)$.
\begin{proposition}\label{prop:limit-W}
Let
\begin{equation}\label{eq:def-Voronoi-cell}
\mc{V}_{j} = \big\{p \in \Delta_{n}\colon p_{j} > p_{l},\; \forall l \in [n]\setminus\{j\}\big\},\quad j \in [n]
\end{equation}
denote the Voronoi cells of the vertices of $\Delta_{n}$ in $\Delta_{n}$ and suppose $\lim_{t \to \infty} S_{i}(t) = S_{i}^{\ast} \in \Delta_{n}$, for any $i \in I$. Then the following assertions hold.
\begin{enumerate}[(a)]
\item If $S_{i}^{\ast} \in \mc{V}_{j^{\ast}(i)}$ for some label (index) $j^{\ast} = j^{\ast}(i) \in J$, then there exist constants $\alpha_{i}, \beta_{i} > 0$ such that
\begin{subequations}\label{eq:prop-limit-W-a}
\begin{gather}
\|W_{i}(t)-e_{j^{\ast}(i)}\|_{1} \leq \alpha_{i} e^{-\beta_{i} t},\quad \forall t \geq 0.
\intertext{
In particular,
}
\lim_{t \to \infty} W_{i}(t) = e_{j^{\ast}(i)}.
\end{gather}
\end{subequations}
\item
One has
\begin{multline}\label{eq:S-flow-fast-concergence}
\int_{0}^{\infty}\|S_{i}(t)-S_{i}^{\ast}\|_{1}\dd t <\infty
\\ \implies\quad
\lim_{t \to \infty} W_{i}(t) = W_{i}^{\ast} \quad\text{with}\quad
\supp(W_{i}^{\ast})=\argmax_{j \in J} S_{ij}^{\ast}.
\end{multline}
\end{enumerate}
\end{proposition}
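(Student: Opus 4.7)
The plan is to reduce both parts to an analysis of the softmax-like formula for $W_{i}(t)$ obtained from Proposition~\ref{prop:W-from-S}. By~\eqref{eq:W-from-S} applied componentwise, together with the explicit expression~\eqref{eq:def-exp} of the exponential map at the barycenter $\eins_{\mc{S}}$, one immediately gets
\begin{equation*}
W_{ij}(t) \;=\; \frac{\exp\!\bigl(\int_{0}^{t} S_{ij}(\tau)\,\D\tau\bigr)}{\sum_{l \in J}\exp\!\bigl(\int_{0}^{t} S_{il}(\tau)\,\D\tau\bigr)},\qquad j \in J.
\end{equation*}
Both assertions then reduce to comparing the growth rates of the $n$ exponents in this fraction.

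For part (a), the hypothesis $S_{i}^{\ast} \in \mc{V}_{j^{\ast}(i)}$ provides a strict gap $\delta := \min_{l \neq j^{\ast}(i)}\bigl(S_{ij^{\ast}(i)}^{\ast}-S_{il}^{\ast}\bigr) > 0$. Convergence $S_{i}(t) \to S_{i}^{\ast}$ together with boundedness of $S_{i}$ on $[0,T]$ yields a constant $C$ and some $T > 0$ such that $\int_{0}^{t}(S_{il}(\tau)-S_{ij^{\ast}(i)}(\tau))\D\tau \leq C - \tfrac{\delta}{2}t$ for all $t \geq T$ and $l \neq j^{\ast}(i)$. Dividing the formula for $W_{il}(t)$ by $W_{ij^{\ast}(i)}(t)$ and using $W_{ij^{\ast}(i)}(t) \leq 1$ then gives $W_{il}(t) \leq C' e^{-\delta t/2}$. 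Since $W_{i}(t) \in \Delta_{n}$, one has $\|W_{i}(t)-e_{j^{\ast}(i)}\|_{1} = 2\sum_{l \neq j^{\ast}(i)} W_{il}(t)$, yielding the desired exponential bound with $\beta_{i} = \delta/2$ and a suitable $\alpha_{i}$.

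For part (b), I would write $\int_{0}^{t} S_{ij}(\tau)\D\tau = S_{ij}^{\ast}t + M_{ij}(t)$, where $M_{ij}(t) := \int_{0}^{t}(S_{ij}(\tau)-S_{ij}^{\ast})\D\tau$. The integrability hypothesis ensures absolute convergence of $M_{ij}(t)$ to some finite $M_{ij}^{\infty} \in \R$ as $t \to \infty$. Factoring $\exp(S^{\ast}t)$ with $S^{\ast} := \max_{l} S_{il}^{\ast}$ out of both numerator and denominator gives
\begin{equation*}
W_{ij}(t) \;=\; \frac{\exp\bigl((S_{ij}^{\ast}-S^{\ast})t + M_{ij}(t)\bigr)}{\sum_{l \in J}\exp\bigl((S_{il}^{\ast}-S^{\ast})t + M_{il}(t)\bigr)}.
\end{equation*}
Terms with $S_{ij}^{\ast} < S^{\ast}$ vanish exponentially fast, while terms with $j \in J^{\ast} := \argmax_{l} S_{il}^{\ast}$ tend to $e^{M_{ij}^{\infty}}$. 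Hence $W_{i}(t)$ converges to $W_{i}^{\ast}$ with $W_{ij}^{\ast} = e^{M_{ij}^{\infty}}/\sum_{l \in J^{\ast}}e^{M_{il}^{\infty}}$ for $j \in J^{\ast}$ and $W_{ij}^{\ast}=0$ otherwise, so $\supp(W_{i}^{\ast}) = J^{\ast}$ as claimed. The only real obstacle is bookkeeping; no tools beyond Proposition~\ref{prop:W-from-S} and the explicit softmax structure of $\exp_{\eins_{\mc{S}}}$ are required.
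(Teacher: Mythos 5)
Your proposal is correct and follows essentially the same route as the paper: both start from the explicit softmax representation $W_{ij}(t)=\exp(\int_0^t S_{ij})/\sum_l \exp(\int_0^t S_{il})$ furnished by Proposition~\ref{prop:W-from-S}, exploit the spectral gap $\delta$ in part (a) to get the exponential bound on $\|W_i(t)-e_{j^*(i)}\|_1=2\sum_{l\neq j^*(i)}W_{il}(t)$, and in part (b) use the integrability hypothesis to ensure the exponent differences within $\argmax_j S^*_{ij}$ converge to finite limits. Your factoring out of $e^{S^*t}$ via the $M_{ij}$ decomposition is just a mild repackaging of the paper's direct comparison of the ratios $\exp(\int_0^t(S_{il}-S_{ij}))$.
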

\begin{proof}
See Appendix~\ref{sec:Proofs}.
\end{proof}
Proposition~\ref{prop:limit-W}(a) states that if any subvector of the $S$-flow converges to a Voronoi cell~\eqref{eq:def-Voronoi-cell}, then the corresponding subvector of $W(t)$ converges exponentially fast to the corresponding integral assignment.

Proposition~\ref{prop:limit-W}(b) handles the case when the limit point $S_{i}^{\ast}$ lies at the border of adjacent Voronoi cells, that is the set $\argmax_{j \in J} S_{ij}^{\ast}$ is \textit{not} a singleton. In this case, one can only state that $W_{i}(t)$ converges to some (possibly nonintegral) point $W_{i}^{\ast}$ without being able to predict precisely this limit based on $S_{i}^{\ast}$ alone. In contrast to (a), we also have to assume that the convergence of the $S$-flow is fast enough---see the hypothesis of~\eqref{eq:S-flow-fast-concergence}. This assumption is reasonable, however, because it is satisfied whenever $S_{i}^{\ast}$ is subvector of a \textit{hyperbolic} equilibrium point of the $S$-flow (cf.~Remark~\ref{remark:exp-convergence} below).
\begin{example}
We briefly demonstrate what may happen when the assumption of~\eqref{eq:S-flow-fast-concergence} is violated. Suppose $S_{i}(t)$ and $S_{i}^{*}$ are given by
\begin{equation}
S_i(t) = \begin{pmatrix} \frac{1}{2} - \frac{1}{t+1} \\ \frac{1}{2} - \frac{2}{t+1} \\ \frac{3}{t+1} \end{pmatrix} \ \longrightarrow\ S_i^* = \begin{pmatrix} \frac{1}{2} \\ \frac{1}{2} \\ 0 \end{pmatrix} \quad \text{for} \quad t \rightarrow \infty.
\end{equation}
The first component of $S_i(t)$ converges faster than the second component. Since $\|S_{i}(t) - S_{i}^{*}\|_{1} = \frac{6}{t+1}$, the convergence rate assumption of~\eqref{eq:S-flow-fast-concergence} does not hold. Calculating $W_i(t)$ due to~\eqref{eq:W-from-S} gives
\begin{equation}
W_i(t) = \frac{1}{1 + \frac{1}{t+1} + (t+1)^4 e^{-\frac{1}{2}t}} \begin{pmatrix} 1 \\ \frac{1}{t+1} \\ (t+1)^4 e^{-\frac{1}{2}t} \end{pmatrix} \longrightarrow W_i^* = \begin{pmatrix} 1 \\ 0 \\ 0 \end{pmatrix} \enskip \text{for $t \rightarrow \infty$},
\end{equation}
i.e.\ $W_i(t)$ still converges, but we have $\supp(W_{i}^{*}) \subsetneq \argmax_{j \in J} S_{ij}^{*}$ unlike the statement of~\eqref{eq:S-flow-fast-concergence}.
This example also shows that, in the case of Proposition~\ref{prop:limit-W}(b), the limit $W_{i}^{*}$ may depend on the \textit{trajectory} $S_{i}(t)$, rather than only on the limit point $S_{i}^{*}$ as in case (a).
\end{example}
Proposition~\ref{prop:limit-W} makes explicit that the $S$-flow largely determines the asymptotic behavior of $W(t)$. The next section, therefore, focuses on the $S$-flow~\eqref{eq:def-S-flow-F} and on its dependency on the parameters $\Omega$.

\subsection{Convergence to Equilibria and Stability} \label{sec:convergence-stability}
In this section, we characterize equilibria, their stability, and convergence properties of the $S$-flow~\eqref{eq:def-S-flow-F}. Quantitative estimates of the basin of attraction to exponentially stable equilibria will be provided, too.

\subsubsection{Characterization of Equilibria and Their Stability}

We show in this section under mild conditions that only \textit{integral} equilibrium points $S^{\ast} \in \ol{\mc{W}}^{\ast}$ can be stable.
\begin{proposition}[equilibria] \label{prop:crit-points}
Let $\Omega \in \R^{n \times n}$ be an arbitrary matrix.
\begin{enumerate}[(a)]
\item A point $S^* \in \ol{\mathcal{W}}$ is an equilibrium point of the $S$-flow~\eqref{eq:def-S-flow-F} if and only if
\begin{equation} \label{eq:equilibrium}
(\Omega S^*)_{ij} = \langle S_{i}^*, (\Omega S^*)_{i} \rangle, \qquad \forall j \in \supp(S_i^*),\qquad \forall i \in I,
\end{equation}
i.e., the subvectors $(\Omega S^*)_{i}$ are constant on $\supp S_i^*$, for each $i \in I$.
\item Every point $S^* \in \Wstar$ is an equilibrium point of the $S$-flow~\eqref{eq:prop_sflow}.
\item Let $J_{+} \subseteq J$ be a non-empty subset of indices, and let $\eins_{J_{+}} \in \R^{n}$ be the corresponding indicator vector with components $(\eins_{J_{+}})_{j}=1$ if $j \in J_{+}$ and $(\eins_{J_{+}})_{j}=0$ otherwise. Then $S^*= \tfrac{1}{|J_{+}|} \eins_{m} \eins_{J_{+}}^\top$ is an equilibrium point. In particular, the barycenter $\eins_{\mathcal{W}}=\tfrac{1}{n} \eins_{m} \eins_{n}^\top$ corresponding to $J_{+}=J$ is an equilibrium point.
\end{enumerate}
\end{proposition}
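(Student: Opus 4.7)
The plan is to expand the vector field $F(S) = R_S(\Omega S)$ row-by-row into the classical replicator form, so that the equilibrium condition decouples into scalar conditions on each component. Concretely, using $R_p = \Diag(p) - pp^{\T}$ applied to each subvector, one obtains the componentwise identity
\[
\dot S_{ij} \;=\; S_{ij}\bigl[(\Omega S)_{ij} - \la S_i, (\Omega S)_i\ra\bigr], \qquad i \in I,\ j \in J.
\]
Thus $S^*$ is an equilibrium of \eqref{eq:def-S-flow-F} iff for every pair $(i,j)$ either $S^*_{ij} = 0$ or $(\Omega S^*)_{ij} = \la S^*_i, (\Omega S^*)_i\ra$. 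All three parts of the proposition will be read off from this identity.

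For part (a), I will rephrase the above dichotomy as: on each $\supp(S^*_i)$ the entries $(\Omega S^*)_{ij}$ must all agree with the scalar $c_i := \la S^*_i, (\Omega S^*)_i\ra$. The direction ``equilibrium $\Rightarrow$ \eqref{eq:equilibrium}'' is then immediate. The converse requires one small self-consistency check: if $(\Omega S^*)_{ij}=c$ for every $j\in\supp(S^*_i)$, then from $\sum_{j\in\supp(S^*_i)}S^*_{ij}=1$ one gets
\[
c_i \;=\; \sum_{j\in\supp(S^*_i)} S^*_{ij}\, (\Omega S^*)_{ij} \;=\; c,
\]
so the forced common value is automatically $c_i$, which closes the equivalence. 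This is the only mildly subtle point in the whole argument; everything else is a direct reading of the replicator identity.

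Part (b) follows instantly from (a): if $S^*\in\Wstar$, each row is a unit vector $e_{j(i)}$, so $\supp(S^*_i)=\{j(i)\}$ is a singleton and the constancy condition on the support is vacuous. (Alternatively, one may note that $R_{e_j}=\Diag(e_j)-e_je_j^{\T}=0$ as a linear map, so the entire vector field vanishes at integral assignments.) For part (c), with $S^* = \tfrac{1}{|J_+|}\eins_m \eins_{J_+}^{\T}$ each row equals $\tfrac{1}{|J_+|}\eins_{J_+}$, so $\supp(S^*_i)=J_+$ for every $i$, and a direct computation gives
\[
(\Omega S^*)_{ij} \;=\; \tfrac{1}{|J_+|}\Bigl(\sum_{k\in I}\Omega_{ik}\Bigr)(\eins_{J_+})_j,
\]
which is constant (equal to $\tfrac{1}{|J_+|}\sum_k \Omega_{ik}$) for $j\in J_+=\supp(S^*_i)$. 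Hence (a) applies and $S^*$ is an equilibrium; specializing $J_+=J$ yields the barycenter $\eins_{\mc{W}}$. I do not foresee any obstacle beyond the self-consistency check in (a); the rest is bookkeeping once the replicator form of the vector field is written out.
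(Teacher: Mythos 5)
Your proposal is correct and follows essentially the same route as the paper: both reduce $F(S)=R_S(\Omega S)$ to the componentwise replicator identity $\dot S_{ij}=S_{ij}\big((\Omega S)_{ij}-\la S_i,(\Omega S)_i\ra\big)$ and read off (a)--(c) from it, with (c) via the same computation $\Omega S^*=\tfrac{1}{|J_+|}(\Omega\eins_m)\eins_{J_+}^{\T}$. Your explicit self-consistency check that a common value $c$ on $\supp(S_i^*)$ must equal $\la S_i^*,(\Omega S^*)_i\ra$ is a small welcome addition that the paper leaves implicit.
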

\begin{proof}\
\begin{enumerate}[(a)]
\item Each equation of the system~\eqref{eq:def-S-flow-F} has the form
\begin{equation}
\dot{S}_{ij} = S_{ij} \big( (\Omega S)_{ij} - \langle S_i, (\Omega S)_{i} \rangle \big),\quad i \in I,\; j \in J.
\end{equation}
$\dot S_{ij}=0$ implies $S_{ij}=S_{ij}^{\ast} \neq 0$ if $j\in \supp(S_{i}^{\ast})$ and that the term in the round brackets is zero, which is~\eqref{eq:equilibrium}.
\item The replicator matrix~\eqref{eq:def-Rp} satisfies $R_{e_{j}} \equiv 0,\; \forall j \in J$. This implies $R_{S^*} = 0$ and in turn $R_{S^*} (\Omega S^*) = 0$.
\item Since $\Omega S^* = \frac{1}{|J_+|} (\Omega \eins_{m}) \eins_{J_+}^\top$, the subvectors $(\Omega S^*)_i,\,i \in I$ are constant on $J_+ = \supp S_i^*$, which implies by (a) that $S^*$ is an equilibrium point.
\end{enumerate} \qed
\end{proof}
\begin{remark}
The set of equilibria characterized by Proposition~\ref{prop:crit-points} (b) and (c) may not exhaust the set of all equilibrium points for a general parameter matrix $\Omega$.
However, we will show below that, under certain mild conditions, any such additional equilibrium points must be unstable.
\end{remark}
Next, we study the stability of equilibrium points.
\begin{lemma}[Jacobian] \label{prop:jacobian}
Let $F(S)$ denote the vector field defining the $S$-flow~\eqref{eq:def-S-flow-F}. Then, after stacking $S$ row-wise,  the Jacobian matrix of $F$ is given by
\begin{equation}\label{eq:Jacobian-S-Flow}
\frac{\partial F}{\partial S} = \begin{pmatrix} B_1 & & \\ & \ddots & \\ & & B_{m} \end{pmatrix} + \begin{pmatrix} R_{S_{1}} & & \\ & \ddots & \\ & & R_{S_{m}} \end{pmatrix} \cdot \Omega \otimes I_{n}
\end{equation}
with block matrices $B_i = \Diag\big( (\Omega S)_i \big) - \langle S_i, (\Omega S)_i \rangle I_{n} - S_i (\Omega S)_i^\top$ and $R_{S_i}$ given by~\eqref{eq:def-Rp}.
\end{lemma}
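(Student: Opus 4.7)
The approach is direct componentwise differentiation. Writing the vector field row-wise, the $j$-th component of $F_i(S) = R_{S_i}(\Omega S)_i$ reads (cf.~\eqref{eq:dot-Wi-explicit})
\[
F_{ij}(S) = S_{ij}\bigl((\Omega S)_{ij} - \phi_i(S)\bigr),
\qquad
\phi_i(S) := \langle S_i,(\Omega S)_i\rangle.
\]
Stacking $S$ row-wise so that the coordinate index is the pair $(i,j)$, the plan is to compute $\partial F_{ij}/\partial S_{kl}$ by the product rule, and to sort the resulting contributions into those carrying a factor $\delta_{ik}$ (populating the block-diagonal) and those carrying a factor $\Omega_{ik}$ (producing the coupling across rows that will assemble into $\Omega\otimes I_n$).

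First I would record the elementary derivatives
\[
\frac{\partial (\Omega S)_{ij}}{\partial S_{kl}} = \Omega_{ik}\,\delta_{jl},
\qquad
\frac{\partial \phi_i}{\partial S_{kl}} = \delta_{ik}(\Omega S)_{il} + \Omega_{ik}\,S_{il},
\]
obtained by direct inspection of the sums defining $(\Omega S)_{ij}$ and $\phi_i$. Substituting these into the product-rule expansion of $F_{ij}$ and collecting terms yields
\[
\frac{\partial F_{ij}}{\partial S_{kl}}
= \delta_{ik}\Bigl[\delta_{jl}\bigl((\Omega S)_{ij}-\phi_i\bigr) - S_{ij}(\Omega S)_{il}\Bigr]
+ \Omega_{ik}\bigl[S_{ij}\delta_{jl} - S_{ij}S_{il}\bigr].
\]

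The last step is to identify the two bracketed expressions as matrix entries. The first bracket is the $(j,l)$-entry of $B_i = \Diag\bigl((\Omega S)_i\bigr) - \phi_i\, I_n - S_i (\Omega S)_i^{\T}$, and the second bracket is the $(j,l)$-entry of the replicator matrix $R_{S_i} = \Diag(S_i) - S_i S_i^{\T}$. Hence the $(i,k)$-block of the Jacobian equals $\delta_{ik}B_i + \Omega_{ik}R_{S_i}$, which is exactly the $(i,k)$-block of the sum $\diag(B_1,\dotsc,B_m) + \diag(R_{S_1},\dotsc,R_{S_m})\cdot(\Omega\otimes I_n)$, since the $(i,k)$-block of $\Omega\otimes I_n$ is precisely $\Omega_{ik}I_n$.

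No substantive obstacle is anticipated; the calculation is elementary but index-heavy. The only points requiring care are, first, the row-wise vectorization convention—one must verify that $\Omega$ acts on the row index $i$ while the identity $I_n$ acts on the column index $j$, giving $\Omega\otimes I_n$ rather than $I_n\otimes \Omega$—and second, the bookkeeping of the diagonal case $k=i$, where both a $\delta_{ik}$-contribution and an $\Omega_{ii}$-contribution appear and must be kept separate so that the decomposition into $B_i$ plus $\Omega_{ii}R_{S_i}$ is clean.
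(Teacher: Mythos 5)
Your proposal is correct and follows essentially the same route as the paper: a direct product-rule differentiation of $F_{ij}(S) = S_{ij}\bigl((\Omega S)_{ij} - \langle S_i,(\Omega S)_i\rangle\bigr)$, identifying the $\delta_{ik}$-terms with the blocks $B_i$ and the $\Omega_{ik}$-terms with $R_{S_i}(\Omega\otimes I_n)$. The paper phrases the computation as a directional derivative $\D F_i(S)[T] = B_iT_i + R_{S_i}(\Omega T)_i$ rather than in explicit index notation, but the content is identical.
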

\begin{proof}
The subvectors of $F$ have the form
\begin{equation}
F_i(S) = R_{S_i} (\Omega S)_i = \big( \Diag(S_i) - S_i S_i^\top \big) (\Omega S)_i,\qquad i \in I.
\end{equation}
Hence
\begin{subequations}\label{eq:Jacobian-dFi}
\begin{align}
\D F_i(S)[T] &= \tfrac{\D}{\D t} F_i(S+t T) |_{t=0} \\
	&= \big( \Diag(T_i) - T_i S_i^\top - S_i T_i^\top \big) (\Omega S)_i + R_{S_i} (\Omega T)_i \\
	&= \big( \Diag\big( (\Omega S)_i \big) - \langle S_i, (\Omega S)_i \rangle I_{n} - S_i (\Omega S)_i^\top \big) T_i + R_{S_i} (\Omega T)_i
	\\
	&= B_{i} T_{i} + R_{S_i} (\Omega T)_i.
\end{align}
\end{subequations}
We have $\D F(S)[T] = \tfrac{\partial F}{\partial S} \vec(T)$ with $\vec(T) \in \R^{m n}$ denoting the vector that results from stacking the row vectors (subvectors) of $T$. Comparing both sides of this equation, with the block matrices of the left-hand side given by~\eqref{eq:Jacobian-dFi}, implies~\eqref{eq:Jacobian-S-Flow}. \qed
\end{proof}

\begin{proposition}[eigenvalues of the Jacobian] \label{prop:Jacobian-eigen}
Let $S^* \in \overline{\mathcal{W}}$ be an equilibrium point of the $S$-flow~\eqref{eq:def-S-flow-F}, i.e.\ $F(S^{\ast}) = R_{S^{\ast}}(\Omega S^{\ast}) = 0$. Then regarding the spectrum $\sigma\big( \tfrac{\partial F}{\partial S}(S^*) \big)$, the following assertions hold.
\begin{enumerate}[(a)]
\item A subset of the spectrum is given by
\begin{equation}\label{eq:spectrum-a}
\sigma\big( \tfrac{\partial F}{\partial S}(S^*) \big) \supseteq \bigcup_{i \in I} \big\{ - \langle S_i^*, (\Omega S^*)_i \rangle \big\} \cup \big\{ (\Omega S^*)_{ij} - \langle S_i^*, (\Omega S^*)_i \rangle \big\}_{j \in J \setminus \supp(S_i^*)}.
\end{equation}
This relation becomes an equation if $S^*$ is integral, i.e.\ $S^* \in \Wstar$.
In the latter case, the eigenvectors are given by
\begin{equation} \label{eq:eigv-S-int}
e_i e_{j^*(i)}^\top \in \R^{m \times n}, \quad e_i (e_{j^*(i)} - e_j)^\top \in \mathcal{T}_0, \quad \forall j \in J \setminus \{ j^*(i) \}, \quad \forall i \in I.
\end{equation}
\item If $S^* = \tfrac{1}{|J_{+}|} \eins_{m} \eins_{J_{+}}^\top$ with $J_{+}\subseteq J$ and $|J_{+}| \geq 2$, then
\begin{equation} \label{eq:spectrum-bary}
\sigma\big( \tfrac{\partial F}{\partial S}(S^*) \big) = \bigcup_{i \in I} \Big\{ -\tfrac{(\Omega \eins_{m})_{i}}{|J_{+}|} \Big\} \cup \bigcup_{\lambda \in \sigma(\Omega)} \big\{ \tfrac{\lambda}{|J_{+}|} \big\}.
\end{equation}
\item Assume the parameter matrix $\Omega$ with elements $\w_{ii},\, i \in I$ on the main diagonal, is nonnegative. If $S_{i}^* \not\in \{0,1\}^{n}$ and $\omega_{ii} > 0$ hold for some $i \in I$, then the Jacobian matrix has at least one eigenvalue with positive real part.
The real and imaginary part of the corresponding eigenvector lie in
\begin{equation}
\mathcal{T}_{+} = \big\{ V \in \mathcal{T}_0 \colon \supp(V) \subseteq \supp(S^*) \big\}.
\end{equation}
\end{enumerate}
\end{proposition}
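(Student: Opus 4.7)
Throughout I write $J := \tfrac{\partial F}{\partial S}(S^*)$ and $\mu_i := \la S_i^*, (\Omega S^*)_i\ra$. All three assertions rest on two book-keeping identities: the equilibrium characterization $(\Omega S^*)_{ij} = \mu_i$ for $j \in \supp(S_i^*)$ from Proposition~\ref{prop:crit-points}(a), together with $R_{S_i^*}\eins_n = 0$ and $\eins_n^\T R_{S_i^*} = 0$.

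For \textbf{(a)} the plan is to exploit a block-triangular decomposition. Partition $[m]\times[n] = \mc{P} \cup \mc{Z}$ according to $S_{ij}^* > 0$ versus $S_{ij}^* = 0$. Differentiating $\dot S_{ij} = S_{ij}\bigl((\Omega S)_{ij} - \la S_i, (\Omega S)_i\ra\bigr)$ directly, on rows $(i,j) \in \mc{Z}$ the outer factor $S_{ij}^*$ kills the product-rule term, leaving $\partial_{S_{kl}} \dot S_{ij}|_{S^*} = \delta_{ik}\delta_{jl}\bigl((\Omega S^*)_{ij} - \mu_i\bigr)$. Reordering so that $\mc{Z}$ comes last turns $J$ into a block upper-triangular matrix whose lower-right diagonal block is $\Diag\bigl((\Omega S^*)_{ij} - \mu_i\bigr)_{(i,j)\in\mc{Z}}$, accounting for the second set of eigenvalues in~\eqref{eq:spectrum-a}. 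For the first set I check by direct computation that $\eins_n^\T B_i = -\mu_i\eins_n^\T$ at equilibrium (using $\eins_n^\T S_i^* = 1$), and combine this with $\eins_n^\T R_{S_i^*} = 0$ to conclude that $\vvec(e_i\eins_n^\T)$ is a left eigenvector of $J$ with eigenvalue $-\mu_i$. When $S^* \in \Wstar$ each $R_{S_i^*} = R_{e_{j^*(i)}}$ vanishes, so Lemma~\ref{prop:jacobian} reduces $J$ to the block diagonal $\Diag(B_i)$; a direct verification that $B_i e_{j^*(i)} = -\mu_i e_{j^*(i)}$ and $B_i(e_{j^*(i)} - e_j) = \bigl((\Omega S^*)_{ij} - \mu_i\bigr)(e_{j^*(i)} - e_j)$, together with their linear independence, diagonalizes each $B_i$ and upgrades the inclusion to equality.

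For \textbf{(b)} I plan to decompose $\R^{m \times n}$ into three pairwise trivially intersecting $J$-invariant subspaces
\begin{align*}
U_1 &= \{V : \supp(V_k) \subseteq J \setminus J_{+}\ \forall k\}, & \dim U_1 &= m(n - |J_{+}|), \\
U_2 &= \{V : V_k = a_k \eins_{J_{+}},\ a \in \R^m\}, & \dim U_2 &= m, \\
U_3 &= \{V : \supp(V_k) \subseteq J_{+},\ \eins_n^\T V_k = 0\ \forall k\}, & \dim U_3 &= m(|J_{+}|-1),
\end{align*}
whose dimensions sum to $mn$. The support annihilations $(\Omega S^*)_k \odot V_k = 0 = S_k^* \odot (\Omega V)_k$ on $U_1$ give $(JV)_k = -\mu_k V_k$; the identities $B_k \eins_{J_{+}} = -\mu_k \eins_{J_{+}}$ and $R_{S_k^*}\eins_{J_{+}} = 0$ give $(JV)_k = -\mu_k a_k \eins_{J_{+}}$ on $U_2$; and $B_k V_k = 0$ on $U_3$ (from $V_k$ supported on $J_{+}$ with $\eins_n^\T V_k = 0$) together with $R_{S_k^*}(\Omega V)_k = \tfrac{1}{|J_{+}|}(\Omega V)_k$ reduces $J|_{U_3}$ to $\tfrac{1}{|J_{+}|}\Omega$ acting row-wise, whose spectrum is $\{\lambda/|J_{+}| : \lambda \in \sigma(\Omega)\}$. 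Assembling these three spectra yields~\eqref{eq:spectrum-bary}.

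For \textbf{(c)} the plan is to show $\tr(J|_{\mc{T}_{+}}) > 0$. Invariance of $\mc{T}_{+}$ is again a support-killing argument: every summand of $(\D F[V])_{kj}$ with $j \notin \supp(S_k^*)$ carries a factor $V_{kj}$ or $S_{kj}^*$ that vanishes, while $\eins_n^\T B_i = -\mu_i\eins_n^\T$ and $\eins_n^\T R_{S_i^*} = 0$ preserve the row-sum constraint. In the orthonormal basis $\{e_i (v_\alpha^{(i)})^\T\}$ of $\mc{T}_{+}$ built from bases of $T_{+}^{(i)} := T_0 \cap \{v : \supp v \subseteq \supp S_i^*\}$, the inter-row terms $\w_{ki} R_{S_k^*}$ for $k \neq i$ do not hit the diagonal, so
\begin{equation*}
\tr(J|_{\mc{T}_{+}}) = \sum_{i \in I}\Bigl(\tr(B_i|_{T_{+}^{(i)}}) + \w_{ii}\tr(R_{S_i^*}|_{T_{+}^{(i)}})\Bigr).
\end{equation*}
The equilibrium identity $(\Omega S^*)_{ij} = \mu_i$ on $\supp S_i^*$ combined with $\eins_n^\T v = 0$ on $T_{+}^{(i)}$ gives $B_i v = 0$ for every $v \in T_{+}^{(i)}$, while $\tr(R_{S_i^*}|_{T_{+}^{(i)}}) = 1 - \|S_i^*\|^2$ because $R_{S_i^*}\eins_n = 0$ places the complementary direction in the kernel so that the full trace $1 - \|S_i^*\|^2$ of $R_{S_i^*}$ concentrates on $T_{+}^{(i)}$. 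Under the hypotheses $\Omega \geq 0$, $\w_{i_0 i_0} > 0$, $S_{i_0}^* \notin \{0,1\}^n$ this reduces to $\tr(J|_{\mc{T}_{+}}) = \sum_i \w_{ii}(1 - \|S_i^*\|^2) \geq \w_{i_0 i_0}(1 - \|S_{i_0}^*\|^2) > 0$, forcing the real operator $J|_{\mc{T}_{+}}$ to possess an eigenvalue with positive real part, with real and imaginary parts of the corresponding eigenvector automatically in $\mc{T}_{+}$. The main obstacle I anticipate is not a single hard step but the bookkeeping: one must consistently track supports and tangent constraints so that the same two identities genuinely drive the block triangularization in (a), the three-way direct sum in (b), and the trace reduction in (c).
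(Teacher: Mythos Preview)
Your proposal is correct and close in spirit to the paper's proof, but with two mild and genuinely useful deviations.

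For (a), the paper works entirely with the transpose and exhibits $e_i\otimes\eins_n$ and $e_i\otimes e_j$ (for $j\notin\supp S_i^*$) as \emph{left} eigenvectors of $J$. You obtain the second family instead from the block upper-triangular structure induced by the $\mc{P}/\mc{Z}$ split, and only invoke the left-eigenvector identity for $-\mu_i$. Both routes yield the same inclusion; your triangularization is a slightly more structural way of reading off the $\mc{Z}$-eigenvalues, while the paper's uniform left-eigenvector computation is marginally shorter. The integral case is handled identically in both.

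For (b), the paper computes eigenpairs directly and, when $\Omega$ is not diagonalizable, falls back on a perturbation-and-continuity argument to recover the full spectrum. Your three-way $J$-invariant decomposition $U_1\oplus U_2\oplus U_3$ sidesteps this entirely: on $U_3$ the restriction is similar to $\tfrac{1}{|J_+|}\,\Omega\otimes I_{|J_+|-1}$, so the spectrum $\{\lambda/|J_+|:\lambda\in\sigma(\Omega)\}$ (with the correct multiplicity $|J_+|-1$) drops out regardless of whether $\Omega$ is diagonalizable. This is a cleaner argument than the paper's.

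For (c), the two arguments are essentially the same computation viewed from different angles. The paper establishes the inclusions $\im R_{S^*}\subseteq\mc{T}_+\subseteq\ker B$, infers that nonzero eigenvalues of $R_{S^*}(\Omega\otimes I_n)$ are eigenvalues of $J$ with eigenvectors in $\mc{T}_+$, and then computes $\tr\!\big(R_{S^*}(\Omega\otimes I_n)\big)=\sum_i\omega_{ii}(1-\|S_i^*\|^2)>0$. You instead show $\mc{T}_+$ is $J$-invariant (which is equivalent to those two inclusions) and compute $\tr(J|_{\mc{T}_+})$ directly, arriving at the identical expression. The conclusion about real and imaginary parts of the eigenvector lying in $\mc{T}_+$ is immediate from invariance in your framing, whereas the paper deduces it from $\im R_{S^*}\subseteq\mc{T}_+$.
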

\begin{proof}
See Appendix~\ref{sec:Proofs}.
\end{proof}
Next, we apply Proposition~\ref{prop:Jacobian-eigen} and the stability criteria stated in Appendix~\ref{sec:Appendix} in order to classify the equilibria of the $S$-flow.
\begin{corollary}[stability of equilibria] \label{cor:stability-S}
Let $\Omega$ be a nonnegative matrix with positive diagonal entries. Then, regarding the equilibria $S^* \in \overline{\mathcal{W}}$ of the $S$-flow~\eqref{eq:def-S-flow-F}, the following assertions hold.
\begin{enumerate}[(a)]
\item $S^* \in \ol{\mc{W}}^{\ast}$ is exponentially stable if, for all $i \in I$,
\begin{multline} \label{eq:stable}
(\Omega S^*)_{i j} < (\Omega S^*)_{i j^*(i)} \quad \text{for all } j \in J \setminus \{ j^*(i) \} \\ \text{with} \quad \{ j^*(i) \} = \argmax_{j \in J}~S_{ij}^*.
\end{multline}
\item $S^* \in \ol{\mc{W}}^{\ast}$ is unstable if, for some $i \in I$,
\begin{multline}
(\Omega S^*)_{i j} > (\Omega S^*)_{i j^*(i)} \quad \text{for some } j  \in J \setminus \{ j^*(i) \} \\ \text{with} \quad \{ j^*(i) \} = \argmax_{j \in J}~S_{ij}^*.
\end{multline}
\item All equilibrium points $S^* \not\in \ol{\mc{W}}^{\ast}$ are unstable.
\end{enumerate}
\end{corollary}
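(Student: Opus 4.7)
The plan is to reduce all three assertions to the spectral information about the Jacobian $\frac{\partial F}{\partial S}(S^*)$ that is already collected in Proposition~\ref{prop:Jacobian-eigen}, and then invoke the standard linearization principles (exponential stability from a Hurwitz Jacobian; instability from a single eigenvalue with positive real part) that are presumably recorded in Appendix~\ref{sec:Appendix}. The point is that Corollary~\ref{cor:stability-S} is essentially a readout of Proposition~\ref{prop:Jacobian-eigen} under the extra structural hypothesis that $\Omega$ is nonnegative with strictly positive diagonal; there is no new dynamical content, only bookkeeping of signs.

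For part (a), $S^* \in \Wstar$ so Proposition~\ref{prop:Jacobian-eigen}(a) gives the full spectrum (equality, not just inclusion). Because $S_i^* = e_{j^*(i)}$, the inner product simplifies to $\langle S_i^*, (\Omega S^*)_i\rangle = (\Omega S^*)_{i,j^*(i)}$, and hence the spectrum is
\begin{equation*}
\bigcup_{i \in I} \Bigl\{ -(\Omega S^*)_{i,j^*(i)} \Bigr\} \cup \Bigl\{ (\Omega S^*)_{ij} - (\Omega S^*)_{i,j^*(i)} \Bigr\}_{j \in J \setminus \{j^*(i)\}}.
\end{equation*}
The first family is strictly negative, since $(\Omega S^*)_{i,j^*(i)} = \sum_{k} \omega_{ik} S^*_{k,j^*(i)} \geq \omega_{ii}\, S^*_{i,j^*(i)} = \omega_{ii} > 0$ by the nonnegativity of $\Omega$ combined with $\omega_{ii} > 0$. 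The second family is strictly negative precisely by hypothesis~\eqref{eq:stable}. Thus the Jacobian is Hurwitz and exponential stability follows from the linearization theorem in Appendix~\ref{sec:Appendix}.

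For part (b), I would read off from the same (exact) spectrum the eigenvalue $(\Omega S^*)_{ij} - (\Omega S^*)_{i,j^*(i)}$, which under the strict reverse inequality of the hypothesis is strictly positive; one positive-real-part eigenvalue already forces instability via the standard linearization criterion. For part (c), $S^* \notin \Wstar$ means there exists some $i$ with $S_i^* \notin \{0,1\}^n$, and together with $\omega_{ii} > 0$ this is exactly the situation of Proposition~\ref{prop:Jacobian-eigen}(c), which directly supplies a Jacobian eigenvalue with strictly positive real part; instability follows by the same principle.

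The main obstacle, already dispatched by Proposition~\ref{prop:Jacobian-eigen}, is that in part (a) one must know the \emph{entire} spectrum of the Jacobian, not just a subset: this is the nontrivial half of Proposition~\ref{prop:Jacobian-eigen}(a), namely that for integral $S^*$ the inclusion becomes an equality. Beyond this, the only delicate point is tracking how $\omega_{ii} > 0$ enters: it is used in part (a) to make the ``diagonal'' family $-(\Omega S^*)_{i,j^*(i)}$ strictly negative, and in part (c) it is needed to invoke Proposition~\ref{prop:Jacobian-eigen}(c). Everything else is a direct sign comparison.
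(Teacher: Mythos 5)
Your part (a) is correct and coincides with the paper's argument: Proposition~\ref{prop:Jacobian-eigen}(a) yields the \emph{full} spectrum at an integral equilibrium, the family $-(\Omega S^*)_{i j^*(i)}$ is strictly negative because $(\Omega S^*)_{i j^*(i)} \geq \omega_{ii} S^*_{i j^*(i)} = \omega_{ii} > 0$, the remaining family is negative precisely by~\eqref{eq:stable}, and Theorem~\ref{thm:stability}(a) applies, with stability on the ambient open set restricting to the invariant subset $\ol{\mc{W}}$.

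The gap is in parts (b) and (c). The instability criterion you appeal to (Theorem~\ref{thm:stability}(b)) concerns the flow on an \emph{open} subset of $\R^{m \times n}$, whereas the instability asserted in the Corollary is for the flow restricted to the compact invariant set $\ol{\mc{W}}$ --- the paper flags exactly this point in Appendix~\ref{sec:Appendix}. A positive eigenvalue of the ambient Jacobian does not by itself give instability on $\ol{\mc{W}}$: if the unstable eigendirection pointed out of $\ol{\mc{W}}$, the restricted flow could still be attracted to $S^*$ (a saddle whose unstable manifold is transversal to the invariant set is stable \emph{within} that set). The paper therefore carries the eigenvector information along: in (b) the eigenvector $e_i (e_{j^*(i)} - e_j)^\top$ from~\eqref{eq:eigv-S-int} lies in $\mc{T}_0$, hence is tangent to $\ol{\mc{W}}$, and in (c) Proposition~\ref{prop:Jacobian-eigen}(c) guarantees that the real and imaginary parts of the unstable eigenvector lie in $\mc{T}_{+} \subseteq \mc{T}_0$; the truncated-cone statement, Proposition~\ref{prop:unstable-cone}(b), then produces a nonempty relatively open subset of $\ol{\mc{W}}$ near $S^*$ from which trajectories are repelled. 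Your proposal checks neither the tangency of the unstable direction nor the passage from ambient instability to instability of the restricted flow. Both steps are needed; both are supplied by results you already cite, so the gap is fillable, but as written the argument for (b) and (c) is incomplete.
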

\begin{proof}$\text{ }$
\begin{enumerate}[(a)]
\item
We apply Theorem~\ref{thm:stability}(a) that provides a condition for stability of the $S$-flow, regarded as flow on an open subset of $\R^{m\times n}$. Since the stability also holds on subsets, this shows stability of the $S$-flow on $\ol{\mc{W}}$.

By Proposition~\ref{prop:Jacobian-eigen}(a), the spectrum of $\tfrac{\partial F}{\partial S}(S^*)$, for $S^{\ast} \in \ol{\mc{W}}^{\ast}$, is given by the right-hand side of~\eqref{eq:spectrum-a} and, since $\Omega$ is nonnegative, is clearly negative if condition~\eqref{eq:stable} holds.
\item
We take eigenvectors into account and invoke Proposition~\ref{prop:unstable-cone}(b).
The eigenvectors are given by~\eqref{eq:eigv-S-int}, and if the eigenvalue $\lambda = (\Omega S^*)_{ij} - (\Omega S^*)_{i j^*(i)}$ is positive, then the corresponding eigenvector $V = e_i (e_{j^*(i)} - e_j)^\top \in \mathcal{T}_0$ is tangent to $\ol{\mathcal{W}}$.
By Proposition~\ref{prop:unstable-cone}(b), there exists an open truncated cone $\mathcal{C} \subset \R^{m\times n}$ such that $\delta \cdot V \in \mathcal{C}$, for sufficiently small $\delta > 0$, and the $S$-flow~\eqref{eq:prop_sflow} is repelled from $S^*$ within $S^* + \mathcal{C}$.
Since $V \in \mathcal{T}_0$, the (relatively) open subset $(S^* + \mathcal{C}) \cap \ol{\mathcal{W}} \subset \ol{\mathcal{W}}$ is non-empty. This shows the instability of $S^*$.
\item
By the assumption on $\Omega$, there is an eigenvalue with positive real part due to Proposition~\ref{prop:Jacobian-eigen}(c), 
and the real and imaginary part of the corresponding eigenvector lie in $\mathcal{T}_{+} \subseteq \mathcal{T}_0$.
So the argument of (b) applies here as well using the real part of the eigenvector.
\end{enumerate} \qed
\end{proof}
\begin{remark}[selection of stable equilibria]\label{rem:stability}
For $S^{\ast}$ to be exponentially stable, Corollary~\ref{cor:stability-S}(a) requires that every averaged subvector $(\Omega S^{\ast})_{i}$ has the same component as maximal component, as does the corresponding subvector $S_{i}^{\ast}$. This means that the $\Omega$-weighted average of the vectors $S_{j}^{\ast}$ within the neighborhood $j \in \mc{N}_{i}$ lies in the Voronoi-cell $\mc{V}_{j^{\ast}(i)}$~\eqref{eq:def-Voronoi-cell} corresponding to $S_{i}^{\ast}$.

Thus, Corollary~\ref{cor:stability-S} provides a mathematical and intuitively plausible definition of `spatially coherent' segmentations of given data, that can be determined by means of the assignment flow. This also demonstrates how the label (index) selection mechanism of the replicator equations~\eqref{eq:dot-Wi-explicit}, whose spatial \textit{coupling} defines the assignment flow~\eqref{eq:AF}, works from the point of view of evolutionary dynamics~\cite{Sandholm:2010aa} when using the similarity vectors $S_{i}(W)$~\eqref{eq:def-Si} as `affinity measures'.
\end{remark}

\subsubsection{\texorpdfstring{Convergence of the $S$-flow to Equilibria}{Convergence of the S-flow to Equilibria}}

We make the basic \textbf{assumption} that the parameter matrix $\Omega$ has the form
\begin{equation}\label{eq:sym-Omega}
\Omega = \Diag(w)^{-1} \widehat{\Omega} \quad \text{with} \quad w \in \R_{>0}^{m} \qquad \text{and} \qquad \widehat{\Omega}^\top = \widehat{\Omega} \in \R^{m \times m}.
\end{equation}
Matrices of the form~\eqref{eq:sym-Omega} include as special cases parameters satisfying
\begin{subequations}
\begin{align}
\Omega &= \Omega^{\T},
&&(\text{symmetric weights})
\label{eq:Omega-symmetric} \\ \label{eq:Omega-normalized}
w &= \widehat\Omega \eins_{m}.
&&(\text{normalized weights})
\end{align}
\end{subequations}
An instance of $\Omega$ satisfying~\eqref{eq:Omega-normalized} are nonnegative uniform weights with symmetric neighborhoods, i.e.
\begin{equation}\label{eq:uniform-weights}
\omega_{ik} = \tfrac{1}{| \mathcal{N}_i |},\quad
\forall k \in \mc{N}_{i}
\qquad\qquad\text{and}\qquad\qquad
k \in \mathcal{N}_i \quad\gdw\quad
i \in \mathcal{N}_k.
\end{equation}
Note that in the following basic convergence theorem, neither $\Omega$ nor $\widehat{\Omega}$ is assumed to be row-stochastic or nonnegative.
\begin{theorem}[convergence to equilibria]\label{thm:convergence}
Let $\Omega$ be of the form~\eqref{eq:sym-Omega}. Then the $S$-flow~\eqref{eq:def-S-flow-F} converges to an equilibrium point $S^{\ast} = S^{\ast}(S_{0}) \in \overline{\mathcal{W}}$, for any initial value $S_0 \in \mathcal{W}$.
\end{theorem}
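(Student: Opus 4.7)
The plan is to exhibit a real-analytic Lyapunov function for the $S$-flow under hypothesis~\eqref{eq:sym-Omega}, to use LaSalle's principle to place the $\omega$-limit set inside the equilibrium set, and then to upgrade to convergence to a single point via a \L{}ojasiewicz gradient inequality.

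The Lyapunov function is the quadratic
\[
J(S) \;=\; \tfrac12\langle S,\widehat\Omega S\rangle \;=\; \tfrac12\sum_{i,k\in I}\widehat\Omega_{ik}\langle S_i, S_k\rangle.
\]
Its Euclidean gradient block is $\partial_{S_i}J=(\widehat\Omega S)_i=w_i(\Omega S)_i$, so along~\eqref{eq:def-S-flow-F} one finds, using the identity $\langle v,R_pv\rangle=\sum_j p_jv_j^2-\langle p,v\rangle^2=\mathrm{Var}_p(v)\ge 0$ valid for every $p\in\Delta_n$,
\[
\frac{d}{dt}J(S(t)) \;=\; \sum_{i\in I}\bigl\langle (\widehat\Omega S)_i,\,R_{S_i}(\Omega S)_i\bigr\rangle \;=\; \sum_{i\in I} w_i\,\mathrm{Var}_{S_i}\!\bigl((\Omega S)_i\bigr)\;\ge\;0.
\]
Hence $J$ is monotone nondecreasing along trajectories; in fact this identifies the $S$-flow as the Riemannian gradient ascent of $J$ with respect to the $w$-weighted product Fisher--Rao metric on $\mathcal W$. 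By Remark~\ref{rem:replicator}(a) the orbit stays in the compact set $\overline{\mathcal W}$, so $J(S(t))$ is bounded and convergent. LaSalle's invariance principle then places the $\omega$-limit set $\omega(S_0)$ inside the largest invariant subset of $\{S\in\overline{\mathcal W}:\dot J(S)=0\}$. But $\mathrm{Var}_{S_i}((\Omega S)_i)=0$ is equivalent to $(\Omega S)_i$ being constant on $\mathrm{supp}(S_i)$, which by Proposition~\ref{prop:crit-points}(a) is precisely the equilibrium condition for the $S$-flow. Therefore $\omega(S_0)$ consists entirely of equilibria.

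The remaining step, which is the actual technical crux, is to rule out that $S(t)$ asymptotically sweeps through a positive-dimensional connected component of the equilibrium set. For this I would invoke the \L{}ojasiewicz gradient inequality for the real-analytic function $J$: near any equilibrium $S^\ast$ there exist $C>0$ and $\theta\in(0,\tfrac12]$ such that $|J(S)-J(S^\ast)|^{1-\theta}\le C\|\mathrm{grad}\,J(S)\|$, where $\mathrm{grad}$ denotes the intrinsic (weighted Fisher--Rao) gradient. Combined with the coercivity estimate $\|\dot S(t)\|^2\le (\min_i w_i)^{-1}\dot J(S(t))$, which follows from $\|R_p v\|^2\le\langle v,R_p v\rangle$ (valid since $R_p$ is symmetric positive semidefinite with $\|R_p\|\le 1$), the standard \L{}ojasiewicz argument yields $\int_0^\infty\|\dot S(t)\|\,dt<\infty$, so $S(t)$ has finite arc length and must converge to a single equilibrium $S^\ast\in\omega(S_0)$. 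The main obstacle in this step is the degeneracy of the Fisher--Rao metric on $\partial\overline{\mathcal W}$ precisely where $\omega(S_0)$ typically lies. I would circumvent it via the face invariance recorded in Remark~\ref{rem:replicator}(b): since $\omega(S_0)$ is connected, it is contained in $\mathrm{rint}(\mathcal F^\ast)$ for a unique smallest face $\mathcal F^\ast\subseteq\overline{\mathcal W}$, and on that face $F$ and $J$ restrict to an analytic Riemannian gradient flow on an open smooth manifold (a product of open simplices), where the \L{}ojasiewicz--Simon convergence theorem in its Riemannian-analytic form applies.
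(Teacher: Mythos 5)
Your first step is correct and is essentially the paper's own first step: the function $f(S)=\langle S,\widehat\Omega S\rangle$ (your $2J$) is precisely the Lyapunov function~\eqref{eq:f-Lyapunov} used in Appendix~\ref{sec:Proofs}, the computation $\frac{\D}{\D t}J=\sum_{i}w_i\operatorname{Var}_{S_i}\!\big((\Omega S)_i\big)\ge 0$ is~\eqref{eq:f-increases}, and combining monotonicity with Proposition~\ref{prop:crit-points}(a) is exactly how the paper shows that every point of the limit set $\Lambda(S_0)$ is an equilibrium. The divergence — and the gap — is in how you rule out a nontrivial connected limit set. The \L{}ojasiewicz inequality your arc-length argument requires is $|J(S)-J(S^\ast)|^{1-\theta}\le C\,\|\operatorname{grad}J(S)\|_g$ with the \emph{weighted Fisher--Rao} gradient norm, i.e.\ $\|\operatorname{grad}J(S)\|_g^2=\sum_i w_i\operatorname{Var}_{S_i}((\Omega S)_i)=\dot J$; only with this norm does $\dot J=\|\operatorname{grad}J\|_g^2$ hold and the chain $-\frac{\D}{\D t}(J^\ast-J)^\theta\gtrsim\|\dot S\|$ close. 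This is \emph{not} the classical \L{}ojasiewicz gradient inequality for the analytic function $J$: near a boundary equilibrium the intrinsic gradient norm carries the vanishing factors $S_{ij}$, $j\notin\supp(S_i^\ast)$, and can be arbitrarily small relative to the Euclidean (projected) gradient, so the classical inequality does not imply the one you invoke. It would have to be established separately for the degenerate quantity $\sum_i w_i\operatorname{Var}_{S_i}((\Omega S)_i)$, with an exponent $\theta>0$ (the crude route via $\operatorname{Var}\gtrsim d(S,\mathcal{E})^N$ gives an exponent that is in general too weak).

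Your proposed repair — pass to the smallest face $\mathcal{F}^\ast$ containing $\omega(S_0)$ and apply Riemannian \L{}ojasiewicz--Simon there — does not close this gap: by Remark~\ref{rem:replicator} the trajectory stays in $\mc{W}$ for all finite $t$ and never lies in $\mathcal{F}^\ast$, so a gradient inequality valid on $\rint(\mathcal{F}^\ast)$ controls only trajectories \emph{inside} that face, not the points $S(t)$ approaching it from the interior, which is exactly where the transverse degeneracy of $R_S$ bites. (Moreover, a connected set of equilibria need not lie in the relative interior of a single face, so the containment $\omega(S_0)\subset\rint(\mathcal{F}^\ast)$ is unjustified.) The paper avoids all of this with the Losert--Akin device: for an arbitrary $S^\ast\in\Lambda(S_0)$ it constructs the local Lyapunov function $V(S)=D^w_{\mathrm{KL}}(S^\ast,S)+2Q(S)$ of~\eqref{eq:def-Q-V}, shows $\frac{\D}{\D t}V(S(t))<0$ near $S^\ast$ on the sublevel set $\{f<f(S^\ast)\}$, and concludes $D^w_{\mathrm{KL}}(S^\ast,S(t))\to 0$ directly — no gradient inequality or finite-arc-length estimate is needed. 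To keep your route you would have to prove the inequality $(J^\ast-J)^{2(1-\theta)}\le C\sum_i w_i\operatorname{Var}_{S_i}((\Omega S)_i)$ with some $\theta>0$ on a full interior neighborhood of $S^\ast$, which is a nontrivial claim in its own right and is the actual content you are missing.
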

\begin{proof}
See Appendix~\ref{sec:Proofs}.
\end{proof}
\begin{proposition} \label{prop:null-set}
Let $\Omega$ be nonnegative with positive diagonal entries, and let ${S^* \in \ol{\mathcal{W}}}$ be an equilibrium point of the $S$-flow~\eqref{eq:def-S-flow-F} which satisfies one of the instability criteria of Corollary~\ref{cor:stability-S} (b) or (c). Then the set of starting points $S_0 \in \mathcal{W}$ for which the $S$-flow converges to $S^*$ has measure zero in $\mathcal{W}$.
\end{proposition}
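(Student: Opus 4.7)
The plan is to apply the center-stable manifold theorem to the $S$-flow at $S^*$ and then conclude that the global basin of attraction of $S^*$ is contained in a countable union of lower-dimensional submanifolds, hence is a Lebesgue null set.

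First, I note that the $S$-flow preserves the $m(n-1)$-dimensional affine subspace $\mc{A} = \{ S \in \R^{m\times n} : S\eins_n = \eins_m\}$ in which $\mc{W}$ sits as an open set. Following Remark~\ref{rem:replicator}(a), I view $F$ as a smooth vector field on an open neighborhood $U \subset \mc{A}$ of $\ol{\mc{W}}$, so that $S^*$ becomes an interior point of $U$. Next, I extract from the hypothesis an unstable eigenvalue of $\frac{\partial F}{\partial S}(S^*)$ whose eigendirection is tangent to $\mc{A}$. In case~(b) of Corollary~\ref{cor:stability-S}, the eigenvalue $(\Omega S^*)_{ij} - (\Omega S^*)_{i,j^*(i)} > 0$ is paired by Proposition~\ref{prop:Jacobian-eigen}(a) with the eigenvector $e_i(e_{j^*(i)} - e_j)^{\T} \in \mc{T}_0$. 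In case~(c), the assumption $S^* \notin \Wstar$ forces some row $S_i^*$ to be non-integral, and together with $\omega_{ii} > 0$ Proposition~\ref{prop:Jacobian-eigen}(c) directly supplies an eigenvalue with positive real part whose eigenvector has real and imaginary parts in $\mc{T}_{+} \subseteq \mc{T}_0$.

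Because $F$ is smooth and the linearization on $T_{S^*}\mc{A} = \mc{T}_0$ has a non-trivial unstable subspace, the classical center-stable manifold theorem yields a neighborhood $V \subset U$ of $S^*$ and a $C^1$ submanifold $\Sigma \subset V$ of dimension strictly less than $m(n-1)$ with the property that every forward orbit of $F$ which remains in $V$ for all $t \geq 0$ is contained in $\Sigma$. In particular, $\Sigma$ has $m(n-1)$-dimensional Lebesgue measure zero in $\mc{A}$.

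To finish, let $\phi_t$ denote the flow map, which is globally defined on $\mc{W}$ and a diffeomorphism onto its image for every $t$ by the existence/uniqueness statement preceding Remark~\ref{rem:replicator}. If $S_0 \in \mc{W}$ satisfies $\phi_t(S_0) \to S^*$, then eventually the tail $\phi_t(S_0)$ lies in $V$, so $\phi_T(S_0) \in \Sigma$ for some $T > 0$, i.e.\ $S_0 \in \phi_T^{-1}(\Sigma)$. Therefore
\begin{equation}
\{\, S_0 \in \mc{W} : \phi_t(S_0) \to S^*\,\} \;\subseteq\; \bigcup_{k \in \N} \phi_k^{-1}(\Sigma),
\end{equation}
and since each $\phi_k^{-1}$ is a smooth diffeomorphism and therefore preserves null sets, the right-hand side is a countable union of Lebesgue null sets in $\mc{W}$, hence itself null. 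The only genuinely nonroutine ingredient is step two: one must ensure an unstable direction \emph{tangent to $\mc{A}$}, so that the center-stable manifold has codimension at least one \emph{inside $\mc{A}$}, and not only in $\R^{m\times n}$; this is precisely why Proposition~\ref{prop:Jacobian-eigen}(c) explicitly records that the unstable eigenvector lies in $\mc{T}_{+}$.
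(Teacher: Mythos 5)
Your proof is correct and follows essentially the same route as the paper's: both invoke the center-stable manifold theorem at $S^*$ and use the existence of an unstable eigendirection tangent to $\mc{T}_0$ (from Proposition~\ref{prop:Jacobian-eigen}) to conclude that the relevant invariant manifold has codimension at least one inside $\mc{W}$. Your additional step of covering the basin of attraction by the countable union $\bigcup_{k}\phi_k^{-1}(\Sigma)$ of null sets merely makes explicit the passage from the local center-stable manifold to the global basin, which the paper states more tersely.
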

\begin{proof}
By~\cite{kelley1966stable}, there exists a center-stable manifold $\mathcal{M}_{\text{cs}}(S^*)$ which is invariant under the $S$-flow and tangent to $E_{\text{c}} \oplus E_{\text{s}}$ at $S^*$. Here, $E_{\text{c}}$ and $E_{\text{s}}$ denote the center and stable subspace of $\tfrac{\partial F}{\partial S}(S^*)$, respectively.
Any trajectory of the $S$-flow converging to $S^*$ lies in $\mathcal{M}_{\text{cs}}(S^*)$.
Therefore, it suffices to show that the dimension of the manifold $\mathcal{M}_{\text{cs}}(S^*) \cap \mathcal{W}$ is smaller than the dimension of $\mathcal{W}$. Note that $\mathcal{M}_{\text{cs}}(S^*) \cap \mathcal{W}$ is a manifold since both $\mathcal{M}_{\text{cs}}(S^*)$ and $\mathcal{W}$ are invariant under the $S$-flow.
We have
\begin{align}
\begin{split}
\dim\big( \mathcal{M}_{\text{cs}}(S^*) \cap \mathcal{W} \big) 
	&= \dim\big( (E_{\text{c}} \oplus E_{\text{s}}) \cap \mathcal{T}_0 \big) = \dim(\mathcal{T}_0) - \dim( E_{\text{u}} \cap \mathcal{T}_0 ) \\
	&= \dim(\mathcal{W}) - \dim( E_{\text{u}} \cap \mathcal{T}_0 ),
\end{split}
\end{align}
where $E_{\text{u}}$ denotes the unstable subspace of $\tfrac{\partial F}{\partial S}(S^*)$.
Since $\tfrac{\partial F}{\partial S}(S^*)$ has an eigenvalue with positive real part and a corresponding eigenvector lying in $\mathcal{T}_0$ (cf.~proof of Corollary~\ref{cor:stability-S}), we have $\dim( {E_{\text{u}} \cap \mathcal{T}_0} ) \geq 1$ and therefore
$\dim\!\big( \mathcal{M}_{\text{cs}}(S^*) \cap \mathcal{W} \big) \leq \dim(\mathcal{W}) - 1$. \qed
\end{proof}
\begin{remark}[consequences for the assignment flow] \label{remark:exp-convergence}
If $S^* \in \ol{\mc{W}}$ is a \emph{hyperbolic} equilibrium point, then the $S$-flow locally behaves as its linearization near $S^*$ by the Hartman-Grobman theorem~\cite[Section 2.8]{perko2013differential}.
Since a linear flow can only converge with an exponential convergence rate, this is also the case for the $S$-flow~\eqref{eq:def-S-flow-F}\footnote{Note that this follows by the H\"older continuity of the homeomorphism in the Hartman-Grobman theorem. The H\"older continuity is shown in~\cite{belitskii2009grobman}.}.
More precisely, if the $S$-flow converges to a hyperbolic equilibrium $S^* \in \ol{\mathcal{W}}$ then there exist $\alpha, \beta > 0$ such that
$\| S(t) - S^* \| \leq \alpha e^{-\beta t}$ irrespective of whether $S^*$ is stable or not.
A direct consequence is $\int_0^\infty \| S_i(t) - S_i^* \|_{1} \D t < \infty$ for all $i \in I$, i.e., assumption of Proposition~\ref{prop:limit-W}(b) automatically holds if $S^*$ is hyperbolic.
\end{remark}
\begin{theorem}\label{thm:measure-0}
Let $\Omega$ be a nonnegative matrix with positive diagonal entries.
Then the set of starting points $S_0 \in \mathcal{W}$ for which the $S$-flow~\eqref{eq:def-S-flow-F} converges to a nonintegral equilibrium $S^* \in \ol{\mathcal{W}}$, has measure zero in $\mathcal{W}$.
\end{theorem}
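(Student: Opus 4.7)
The plan is to aggregate the per-equilibrium measure-zero bound of Proposition~\ref{prop:null-set} over the possibly uncountable set
\begin{equation*}
\mc{E}_{\mrm{ni}} := \big\{ S^{\ast} \in \ol{\mc{W}} \colon F(S^{\ast})=0,\ S^{\ast} \notin \Wstar \big\}
\end{equation*}
of nonintegral equilibria by means of a Lindel\"of covering argument on $\ol{\mc{W}}$. Concretely, I would exhibit the target set
\begin{equation*}
B := \big\{ S_{0} \in \mc{W} \colon S(t;S_{0}) \to S^{\ast} \text{ for some } S^{\ast} \in \mc{E}_{\mrm{ni}} \big\}
\end{equation*}
as a subset of a countable union of lower-dimensional smooth submanifolds of $\mc{W}$ and use the fact that a smooth flow preserves Lebesgue null sets.

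First, by Corollary~\ref{cor:stability-S}(c) combined with Proposition~\ref{prop:Jacobian-eigen}(c), at every $S^{\ast} \in \mc{E}_{\mrm{ni}}$ the Jacobian $\tfrac{\partial F}{\partial S}(S^{\ast})$ admits an eigenvalue with positive real part whose associated eigenvector lies in $\mc{T}_{0}$. Exactly as in the proof of Proposition~\ref{prop:null-set}, the local center-stable manifold theorem then supplies an open neighborhood $U(S^{\ast}) \subset \R^{m \times n}$ and a local center-stable manifold $\mc{M}^{\mrm{loc}}_{\mrm{cs}}(S^{\ast}) \subset U(S^{\ast})$ with the property that every point of $U(S^{\ast})$ whose forward $S$-orbit stays in $U(S^{\ast})$ lies in $\mc{M}^{\mrm{loc}}_{\mrm{cs}}(S^{\ast})$; moreover, the unstable direction being tangent to $\mc{W}$ forces $\mc{M}^{\mrm{loc}}_{\mrm{cs}}(S^{\ast}) \cap \mc{W}$ to be a smooth submanifold of $\mc{W}$ of codimension at least one, hence Lebesgue-null in $\mc{W}$. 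Since $\ol{\mc{W}}$ is second-countable, the open cover $\{ U(S^{\ast}) \}_{S^{\ast} \in \mc{E}_{\mrm{ni}}}$ of $\mc{E}_{\mrm{ni}}$ admits a countable subcover $\{ U_{k} \}_{k \in \N}$ with $U_{k} = U(S^{\ast}_{k})$. For any $S_{0} \in B$ pick $k$ with $\lim_{t \to \infty} S(t;S_{0}) \in U_{k}$ and then an integer $T \in \N$ with $S(t;S_{0}) \in U_{k}$ for all $t \geq T$; this forces $\Phi_{T}(S_{0}) \in \mc{M}^{\mrm{loc}}_{\mrm{cs}}(S^{\ast}_{k})$, where $\Phi_{t}$ denotes the flow of~\eqref{eq:def-S-flow-F}. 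Consequently
\begin{equation*}
B \;\subseteq\; \bigcup_{k,\, T \in \N} \Phi_{T}^{-1}\!\big( \mc{M}^{\mrm{loc}}_{\mrm{cs}}(S^{\ast}_{k}) \cap \mc{W} \big).
\end{equation*}
Each $\Phi_{T}$ is smooth with everywhere invertible Jacobian on $\mc{W}$ (by Liouville's formula applied to the variational equation of the $S$-flow) and hence preserves Lebesgue null sets; a countable union of null sets is null, and therefore $\mu(B)=0$.

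The main obstacle is the passage from a single equilibrium, where Proposition~\ref{prop:null-set} already yields a null basin, to the whole family $\mc{E}_{\mrm{ni}}$, which is a semi-algebraic set that may be an uncountable continuum and for which a literal union of per-equilibrium bounds is not measure-theoretically admissible. The covering trick succeeds because each $\mc{M}^{\mrm{loc}}_{\mrm{cs}}(S^{\ast}_{k})$ absorbs \emph{every} forward orbit eventually confined to $U_{k}$---in particular those whose limit is some nearby nonintegral equilibrium distinct from $S^{\ast}_{k}$---so a single center-stable patch simultaneously handles an entire open piece of $\mc{E}_{\mrm{ni}}$, and separability of $\ol{\mc{W}}$ turns the uncountable cover into a countable one. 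The remaining technicality, preservation of Lebesgue null sets under $\Phi_{T}$, reduces to smoothness of the right-hand side of~\eqref{eq:def-S-flow-F} and the nondegeneracy of the resulting fundamental matrix on the compact set $\ol{\mc{W}}$.
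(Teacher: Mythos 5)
Your argument is correct, but it reaches the conclusion by a genuinely different route than the paper. The paper handles the possibly non-finite equilibrium set by decomposing it according to support into the polytopal pieces $\mathcal{E}_{\mathcal{J}}$ and according to the dimensions of the stable/center/unstable subspaces, writing each piece as a countable union of compacta via a properness argument for the eigenvalue map, and then invoking Fenichel's center-stable manifold theorem for \emph{compact sets} of equilibria to obtain one invariant manifold per compactum. You avoid all of this machinery: you use only the pointwise local center-stable manifold theorem at a single equilibrium, but exploit its characterization property --- every forward orbit that remains in the defining neighborhood $U(S^{\ast})$ lies on $\mathcal{M}^{\mathrm{loc}}_{\mathrm{cs}}(S^{\ast})$ --- so that a single local patch absorbs all orbits converging to \emph{any} equilibrium inside that patch; Lindel\"of then reduces the cover to countably many patches, and pulling back by the integer-time flow maps (which are $C^{1}$ and hence preserve null sets) exhibits the basin as a countable union of null sets. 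This is more elementary and sidesteps both Fenichel's theorem and the bookkeeping over spectral types. Two points deserve care in a final write-up: (i) the confinement property is not contained in the bare existence statement of the center-stable manifold as the paper quotes it, so you should cite a version (e.g.\ via the Lyapunov--Perron construction) that explicitly includes it; (ii) the claim that $\mathcal{M}^{\mathrm{loc}}_{\mathrm{cs}}(S^{\ast})\cap\mc{W}$ has codimension at least one in $\mc{W}$ is cleanest if you apply the center-stable manifold theorem directly to the flow restricted to the invariant affine subspace $\eins_{\mc{W}}+\mc{T}_{0}$, whose linearization at $S^{\ast}$ inherits an unstable eigenvalue because the corresponding eigenvector lies in $\mc{T}_{0}$ by Proposition~\ref{prop:Jacobian-eigen}(c); this avoids reasoning about the intersection of an ambient manifold with $\mc{W}$, which is also the least rigorous step in the paper's own proof.
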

\begin{proof}
Let $\mathcal{E} = \{ S^* \in \ol{\mc{W}} \colon F(S^*) = 0 \}$ denote the set of all equilibria of the \mbox{$S$-flow} in $\ol{\mathcal{W}}$, which is a compact subset of $\ol{\mathcal{W}}$.
If $\mathcal{E}$ contains only isolated points, i.e., $\mathcal{E}$ is finite, then the statement follows from Proposition~\ref{prop:null-set}. In order to take also into account nonfinite sets $\mc{E}$ of equilibria, we apply the more general~\cite[Theorem~9.1]{fenichel1979geometric}. Some additional notation is introduced first.

For any index set $\mc{J} \subseteq I \times J$, set
\begin{equation}
\mathcal{E}_{\mathcal{J}} = \big\{ S^* \in \mathcal{E} \colon \supp(S^*) = \mathcal{J} \big\} \subset \mathcal{E}.
\end{equation}
The set $\mathcal{E}_{\mathcal{J}}$ is the relative interior of a convex polytope and therefore a manifold of equilibria.
This follows from the observation that the equilibrium criterion~\eqref{eq:equilibrium} is a set of linear equality constraints for $S^* \in \ol{\mc{W}}$, given by
\begin{equation}
\left.
\begin{aligned}
(\Omega S^*)_{ij} - (\Omega S^*)_{il} &=0 &\quad \forall j,l &\in \supp(S_i^*) \\
S_{ij}^* &= 0&\quad \forall j &\in J \setminus \supp(S_i^*)
\end{aligned}
\quad\right\}, \qquad \forall i \in I.
\end{equation}
Further, define for $n_{\text{s}},n_{\text{c}},n_{\text{u}} \in \N\cup\{0\}$ with $n_{\text{s}} + n_{\text{c}} + n_{\text{u}} = m n$ the set
\begin{equation}
\mathcal{E}_{(n_{\text{s}},n_{\text{c}},n_{\text{u}})} = \big\{ S^* \in \mathcal{E} \colon \dim E_{\text{s}}(S^*) = n_{\text{s}},\; \dim E_{\text{c}}(S^*) = n_{\text{c}},\; \dim E_{\text{u}}(S^*) = n_{\text{u}} \big\},
\end{equation}
where $E_{\text{c}}(S^*)$, $E_{\text{s}}(S^*)$ and $E_{\text{u}}(S^*)$ denote the center, stable and unstable subspace of $\tfrac{\partial F}{\partial S}(S^*)$.
This set can be written as countable union of compact sets. This can be seen as follows. The map
\begin{equation}\label{eq:proof-nullset-eigenmap}
\mathcal{E} \rightarrow \big\{ x \in \R^{m n} \colon x_1 \leq x_2 \leq \dots \leq x_{m n} \big\}, \qquad S^* \mapsto \Re \Big( \lambda \big( \tfrac{\partial F}{\partial S}(S^*) \big) \Big),
\end{equation}
where $\lambda(\cdot)$ denotes the vector of eigenvalues, is a continuous map on a compact set and therefore proper, i.e., preimages of compact sets under the map \eqref{eq:proof-nullset-eigenmap} are compact.
It is clear that the set $U_{\text{s}} \times U_{\text{c}} \times U_{\text{u}}$ with
\begin{subequations}
\begin{align}
U_{\text{s}} &= \big\{ x \in \R^{n_{\text{s}}} \colon x_1 \leq \dots \leq x_{n_s} < 0 \big\}, \\
U_{\text{c}} &= \big\{ x \in \R^{n_{\text{c}}} \colon x = 0 \big\}, \\
U_{\text{u}} &= \big\{ x \in \R^{n_{\text{u}}} \colon 0 < x_1 \leq \dots \leq x_{n_u} \big\}
\end{align}
\end{subequations}
can be written as countable union of compact sets. The preimage of this set under the map~\eqref{eq:proof-nullset-eigenmap} is $\mathcal{E}_{(n_{\text{s}},n_{\text{c}},n_{\text{u}})}$.

To complete the proof, we now argue similar to the proof of Proposition~\ref{prop:null-set}: the existence of nontrivial unstable subspaces for nonintegral equilibria implies that the center-stable manifold has a smaller dimension.

Let $\mathcal{J}$ be the support of any nonintegral equilibrium and let $\mathcal{E}_{(n_{\text{s}},n_{\text{c}},n_{\text{u}})}$ be such that $\mathcal{E}_{\mathcal{J}} \cap \mathcal{E}_{(n_{\text{s}},n_{\text{c}},n_{\text{u}})} \neq \emptyset$.
As seen in the proof of Corollary~\ref{cor:stability-S}(c), we have $E_{\text{u}}(S^*) \cap \mc{T}_0 \neq \{0\}$ for any $S^* \in \mathcal{E}_{\mathcal{J}}$, i.e.\ $n_{\text{u}} \geq 1$. Since both $\mathcal{E}_{\mathcal{J}}$ and $\mathcal{E}_{(n_{\text{s}},n_{\text{c}},n_{\text{u}})}$ can be written as countable union of compact sets, this is also the case for their intersection, i.e., we have
\begin{equation}\label{eq:proof-cover-Kl}
\mathcal{E}_{\mathcal{J}} \cap \mathcal{E}_{(n_{\text{s}},n_{\text{c}},n_{\text{u}})} = \bigcup_{l \in \N} K_l
\end{equation}
with $K_l \subseteq \mathcal{E}_{\mathcal{J}}$ compact.
For any $l \in \N$, there exists a center-stable manifold $\mathcal{M}_{\text{cs}}(K_l)$ containing $K_l$, which is invariant under the $S$-flow and tangent to $E_{\text{c}}(S^*) \oplus E_{\text{s}}(S^*)$ at any $S^* \in K_l$~\cite[Theorem~9.1]{fenichel1979geometric}.
Any trajectory of the $S$-flow converging to a point $S^* \in K_l$ lies in $\mathcal{M}_{\text{cs}}(K_l)$.
Hence, analogous to the proof of Proposition~\ref{prop:null-set}, we have
\begin{equation}
\dim\big( \mathcal{M}_{\text{cs}}(K_l) \cap \mc{W} \big) = \dim(\mc{W}) - \dim(E_{\text{u}}(S^*) \cap \mc{T}_0) \leq \dim(\mc{W}) - 1,
\end{equation}
with any $S^* \in K_l$, i.e., $\mathcal{M}_{\text{cs}}(K_l) \cap \mc{W}$ has measure zero in $\mc{W}$.
The countable union $\bigcup_{l \in \N} \mathcal{M}_{\text{cs}}(K_l) \cap \mc{W}$, which contains all trajectories converging to an equilibrium $S^* \in \mathcal{E}_{\mathcal{J}} \cap \mathcal{E}_{(n_{\text{s}},n_{\text{c}},n_{\text{u}})}$, has measure zero as well. Since there are only finitely many such sets $\mathcal{E}_{\mathcal{J}} \cap \mathcal{E}_{(n_{\text{s}},n_{\text{c}},n_{\text{u}})}$, this completes the proof. \qed
\end{proof}

In view of Theorem \ref{thm:measure-0}, the following Corollary that additionally takes into account assumption \eqref{eq:sym-Omega}, is obvious.
\begin{corollary}[Convergence to integral assignments]
Let $\Omega$ be a nonnegative matrix with positive diagonal entries which also fulfills the symmetry assumption~\eqref{eq:sym-Omega}. Then the set of starting points $S_0 \in \mathcal{W}$, for which the $S$-flow~\eqref{eq:def-S-flow-F} does not converge to an integral assignment $S^* \in \Wstar$, has measure zero.
If $\Omega$ is additionally invertible, then the set of distance matrices $D \in \R^{m \times n}$ for which the $S$-flow does not converge to an integral assignment has measure zero as well.
\end{corollary}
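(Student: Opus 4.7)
The plan is to combine the two main theorems just established, and then transfer the first claim from initial values $S_0 \in \mathcal{W}$ to distance matrices $D \in \R^{m \times n}$ via the $S$-parametrization of Proposition~\ref{prop:S-AF}.

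For the first claim, by Theorem~\ref{thm:convergence} the hypothesis~\eqref{eq:sym-Omega} on $\Omega$ guarantees that the $S$-flow~\eqref{eq:def-S-flow-F} converges to some equilibrium $S^{\ast}(S_{0}) \in \ol{\mathcal{W}}$ for every starting point $S_{0} \in \mathcal{W}$. Since $\Omega$ is in addition nonnegative with positive diagonal entries, Theorem~\ref{thm:measure-0} applies and shows that the subset of $\mathcal{W}$ of those $S_{0}$ for which the limit $S^{\ast}(S_{0})$ is nonintegral has Lebesgue measure zero on $\mathcal{W}$. Since convergence is guaranteed by Theorem~\ref{thm:convergence}, failing to converge to an integral assignment is equivalent to converging to a nonintegral one, so this exceptional set has measure zero.

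For the second claim, I would use Proposition~\ref{prop:S-AF} which writes the initial value of the $S$-flow as $S_{0}(D) = \exp_{\baryW}(-\Omega D)$, regarded as a smooth map $\Phi \colon \R^{m \times n} \to \mathcal{W}$. The task is to show that $\Phi$ pulls measure-zero subsets of $\mathcal{W}$ back to measure-zero subsets of $\R^{m \times n}$. The map $\Phi$ decomposes as $D \mapsto -\Omega D \mapsto \exp_{\baryW}(-\Omega D)$. When $\Omega$ is invertible, the first step is a linear bijection of $\R^{m \times n}$ (acting row-wise), so it preserves null sets. The second step factors through the row-wise projection $\Pi_{\mathcal{T}_{0}}$ since $\exp_{p}(v + c\eins_{n}) = \exp_{p}(v)$ for any $c \in \R$; thus $\exp_{\baryW} = \psi \circ \Pi_{\mathcal{T}_{0}}$ where $\psi \colon \mathcal{T}_{0} \to \mathcal{W}$ is a diffeomorphism. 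The diffeomorphism preserves null sets, and the linear projection $\Pi_{\mathcal{T}_{0}} \colon \R^{m \times n} \to \mathcal{T}_{0}$ pulls null sets back to null sets by Fubini, since its kernel (row-constant matrices) is a finite-dimensional complement of $\mathcal{T}_{0}$ in $\R^{m \times n}$. Composing these three steps, $\Phi^{-1}$ of a null set is a null set. Applying this to the exceptional null set of starting points from the first claim yields the exceptional null set of distance matrices $D$.

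The only subtle step is the pullback argument for $\Phi$, and it reduces to the standard fact that linear surjections with finite-dimensional kernel (together with diffeomorphisms and linear bijections) preserve the class of Lebesgue null sets; the invertibility of $\Omega$ is exactly what is needed to rule out a degenerate, positive-measure fiber in the first factor of the decomposition. No further assumptions beyond those already invoked by Theorems~\ref{thm:convergence} and~\ref{thm:measure-0} are required. \qed
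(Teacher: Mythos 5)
Your proposal is correct and follows exactly the route the paper intends: the paper states this corollary without proof (``is obvious'' in view of Theorem~\ref{thm:convergence} and Theorem~\ref{thm:measure-0}), and your first paragraph is precisely that combination. Your pullback argument for the second claim --- factoring $D \mapsto \exp_{\baryW}(-\Omega D)$ into an invertible linear map, the projection $\Pi_{0}$ with row-constant kernel, and the diffeomorphism $\exp_{\baryW}|_{\mc{T}_{0}} \colon \mc{T}_{0} \to \mc{W}$ --- correctly supplies the details the paper omits.
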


\subsubsection{Basins of Attraction}
Corollary~\ref{cor:stability-S} says that, if a point $S^* \in \Wstar$ satisfies the stability criterion~\eqref{eq:stable}, then there exists an open neighborhood of $S^*$ such that the $S$-flow emanating from this neighborhood will converge to $S^*$ with an exponential convergence rate.
The subsequent proposition quantifies this statement by describing the convergence in balls around the equilibria which are contained in the corresponding basin of attraction.
\begin{proposition} \label{prop:attraction}
Let $\Omega$ be a nonnegative matrix with positive diagonal entries, and
let $S^* \in \Wstar$ satisfy~\eqref{eq:stable}. Furthermore, set
\begin{multline}\label{eq:attraction-AS*}
A(S^*) \coloneqq \bigcap_{i \in I} \bigcap_{j \neq j^*(i)} \big\{ S \in \R^{m \times n} \colon (\Omega S)_{ij} < (\Omega S)_{i j^*(i)} \big\} \\ \text{with} \quad \{ j^*(i) \} = \argmax_{j \in J}~S_{ij}^*,
\end{multline}
which is an open convex polytope containing $S^*$. Finally, let $\varepsilon > 0$ be small enough such that
\begin{equation}\label{eq:attraction-B-veps}
B_{\varepsilon}(S^*) \coloneqq \big\{ S \in \ol{\mathcal{W}}  \colon \max_{i \in I} \| S_{i} - S_{i}^* \|_{1} < \varepsilon \big\} \subset \big(A(S^*) \cap \ol{\mathcal{W}}\big).
\end{equation}
Then, regarding the $S$-flow~\eqref{eq:def-S-flow-F}, the following holds:
If $S(t_0) \in B_{\varepsilon}(S^*)$ for some point in time $t_0$, then $S(t) \in B_{\varepsilon}(S^*)$ for all $t \geq t_0$ and $\lim_{t \rightarrow \infty} S(t) = S^*$.
Moreover, we have
\begin{subequations}\label{eq:atttaction-rate}
\begin{align}\label{eq:attraction-Gronwall}
\| S_i(t) - S_i^* \|_{1} \leq \| S_i(t_0) - S_i^* \|_{1} \cdot e^{-\beta_i (t - t_0)}, \quad \forall i \in I,
\intertext{where}\label{eq:beta-i-attraction}
\beta_i = \min_{S \in \ol{B_{\delta}(S^*)}
 \cap \ol{\mathcal{W}}}~S_{i j^*(i)} \cdot \min_{j \neq j^*(i)}~\big( (\Omega S)_{i j^*(i)} - (\Omega S)_{ij} \big) > 0
\end{align}
\end{subequations}
and $\delta > 0$ is chosen small enough such that $S(t_{0}) \in \ol{B_{\delta}(S^{\ast})} \subset B_{\veps}(S^{\ast})$.
\end{proposition}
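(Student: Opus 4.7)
Since $S^* \in \Wstar$, each subvector is a unit vector $S_i^* = e_{j^*(i)}$, and for any $S \in \ol{\mc{W}}$ a direct computation gives the key scalar reduction
\[
\|S_i - S_i^*\|_1 = (1 - S_{i,j^*(i)}) + \sum_{j \neq j^*(i)} S_{ij} = 2\bigl(1 - S_{i,j^*(i)}\bigr).
\]
I therefore introduce $u_i(t) \coloneqq 1 - S_{i,j^*(i)}(t) = \tfrac{1}{2}\|S_i(t) - S_i^*\|_1$ and reduce the whole estimate to scalar differential inequalities for the $u_i$. The explicit replicator form~\eqref{eq:dot-Wi-explicit} yields $\dot u_i = -S_{i,j^*(i)}\bigl((\Omega S)_{i,j^*(i)} - \la S_i, (\Omega S)_i\ra\bigr)$, and using $\sum_{j} S_{ij} = 1$ the bracket rewrites as $\sum_{j \neq j^*(i)} S_{ij}\bigl((\Omega S)_{i,j^*(i)} - (\Omega S)_{ij}\bigr)$. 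Hence
\[
\dot u_i \;=\; -\,S_{i,j^*(i)} \sum_{j \neq j^*(i)} S_{ij}\bigl((\Omega S)_{i,j^*(i)} - (\Omega S)_{ij}\bigr). \qquad (\star)
\]

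\textbf{Forward invariance of $B_\varepsilon(S^*)$.} As long as $S(t) \in A(S^*) \cap \ol{\mc{W}}$, each factor on the right-hand side of $(\star)$ is nonnegative, so $\dot u_i(t) \leq 0$ for every $i \in I$. Thus $t \mapsto \max_i u_i(t)$ is non-increasing on any sub-arc contained in $B_\varepsilon(S^*)$. A standard continuity argument---if the trajectory exited $B_\varepsilon(S^*)$ it would first have to reach $\max_i u_i = \varepsilon/2$, contradicting the non-increase---shows that $S(t) \in B_\varepsilon(S^*)$ for all $t \geq t_0$. In particular, $u_i(t)$ converges as $t \to \infty$.

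\textbf{Exponential rate via Grönwall.} Given the initial point $S(t_0) \in B_\varepsilon(S^*)$, choose any $\delta \in [\max_i \|S_i(t_0) - S_i^*\|_1,\,\varepsilon)$; then $S(t_0) \in \ol{B_\delta(S^*)} \subset B_\varepsilon(S^*)$, and by the monotonicity above $S(t) \in \ol{B_\delta(S^*)} \cap \ol{\mc{W}}$ for all $t \geq t_0$. Using $\sum_{j \neq j^*(i)} S_{ij} = u_i$ and bounding the bracket in $(\star)$ from below by $\min_{j \neq j^*(i)}((\Omega S)_{i,j^*(i)} - (\Omega S)_{ij})$, I obtain from $(\star)$ the scalar differential inequality $\dot u_i \leq -\beta_i\,u_i$ with $\beta_i$ exactly as in~\eqref{eq:beta-i-attraction}. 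Positivity of $\beta_i$ follows because $\ol{B_\delta(S^*)} \cap \ol{\mc{W}}$ is compact, the inclusion $\ol{B_\delta(S^*)} \subset A(S^*)$ guarantees $\min_{j \neq j^*(i)}((\Omega S)_{i,j^*(i)} - (\Omega S)_{ij}) > 0$ pointwise, and $S_{i,j^*(i)} \geq 1 - \delta/2 > 0$ on this set; a continuous strictly positive function on a compact set attains a positive minimum. Grönwall's inequality then gives $u_i(t) \leq u_i(t_0) e^{-\beta_i(t - t_0)}$, which after multiplication by $2$ is exactly~\eqref{eq:attraction-Gronwall}, and the limit $S(t) \to S^*$ follows since $\beta_i > 0$.

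\textbf{Where the work lies.} The substantive step is the algebraic reduction to $(\star)$, which converts a multi-vertex stability question about a coupled replicator system into $m$ decoupled scalar differential inequalities and makes evident both the sign structure (for invariance) and the $u_i$-linear majorant (for the exponential rate). Once $(\star)$ is in hand, invariance and Grönwall are routine; the only minor subtlety is the two-step choice of $\delta$, which is possible precisely because $B_\varepsilon(S^*)$ is open and $S(t_0)$ is an interior point.
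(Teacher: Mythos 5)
Your proof is correct and follows essentially the same route as the paper's: both reduce to the scalar quantity $\|S_i-S_i^*\|_1 = 2(1-S_{ij^*(i)})$, derive the differential inequality $\tfrac{\D}{\D t}\|S_i-S_i^*\|_1 \le -S_{ij^*(i)}\,\|S_i-S_i^*\|_1\cdot\min_{j\neq j^*(i)}\big((\Omega S)_{ij^*(i)}-(\Omega S)_{ij}\big)$, and conclude by forward invariance plus Gronwall with $\beta_i$ defined as a minimum over the compact set $\ol{B_{\delta}(S^*)}\cap\ol{\mc{W}}$. The only cosmetic difference is that you rewrite the bracket exactly as $\sum_{j\neq j^*(i)}S_{ij}\big((\Omega S)_{ij^*(i)}-(\Omega S)_{ij}\big)$ before bounding, whereas the paper bounds $\la S_i,(\Omega S)_i\ra$ by the maximal off-label entry; the resulting estimates coincide.
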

\begin{proof}
For each $i \in I$, we have with $S_{i}^{\ast}=e_{j^{\ast}(i)}$
\begin{subequations}
\begin{align}
\begin{split}
\frac{\mathrm{d}}{\mathrm{d}t}& \| S_{i} - S_{i}^* \|_{1} \\
	&\stackrel{\hphantom{\eqref{eq:Si-diff-ell-1}}}{= }\frac{\mathrm{d}}{\mathrm{d}t} \Big( 1 - S_{i j^*(i)} + \sum_{j \neq j^*(i)} S_{i j} \Big)  \qquad(\text{using}\; \sum_{j \in [n]} S_{ij}=1)
\end{split} \\ \label{eq:Si-diff-ell-1}
	&\stackrel{\hphantom{\eqref{eq:Si-diff-ell-1}}}{=} \frac{\mathrm{d}}{\mathrm{d}t}(2 - 2 S_{i j^*(i)})
	\\
	&\stackrel{\substack{\hphantom{\eqref{eq:Si-diff-ell-1}} \\ \eqref{eq:def-S-flow-F}}}{=} - 2 S_{i j^*(i)} \big( (\Omega S)_{i j^*(i)} - \langle S_i, (\Omega S)_i \rangle \big) \\
	&\stackrel{\hphantom{\eqref{eq:Si-diff-ell-1}}}{\leq} -2 S_{i j^*(i)} \Big( (\Omega S)_{i j^*(i)} - S_{i j^*(i)} (\Omega S)_{i j^*(i)} - \max_{j \neq j^*(i)}~(\Omega S)_{ij} \sum_{j \neq j^*(i)} S_{ij} \Big) \\
	&\stackrel{\hphantom{\eqref{eq:Si-diff-ell-1}}}{=} -2 S_{i j^*(i)} (1 - S_{i j^*(i)}) \Big( (\Omega S)_{i j^*(i)} - \max_{j \neq j^*(i)}~(\Omega S)_{ij} \Big) \\
	&\stackrel{\eqref{eq:Si-diff-ell-1}}{=}
	- S_{i j^*(i)} \| S_{i} - S_{i}^* \|_{1} \cdot \min_{j \neq j^*(i)}~\big( (\Omega S)_{i j^*(i)} - (\Omega S)_{ij} \big).
\end{align}
\end{subequations}
Choosing $\delta > 0$ such that $S(t_0) \in \ol{B_{\delta}(S^*)} \subset B_{\varepsilon}(S^*)$, it follows that $\beta_{i}$ given by~\eqref{eq:beta-i-attraction} is positive. Consequently
\begin{equation}
\frac{\mathrm{d}}{\mathrm{d}t} \| S_{i} - S_{i}^* \|_{1} \leq -\beta_i \| S_{i} - S_{i}^* \|_{1}
\end{equation}
and by Gronwall's Lemma~\eqref{eq:attraction-Gronwall} holds.  Hence, $\max_{i \in I} \| S_{i} - S_{i}^* \|_{1}$ monotonically decreases as long as $S(t) \in \overline{B_{\delta}(S^*)}$.
This guarantees that $S(t)$ stays in $\overline{B_{\delta}(S^*)} \subset B_{\varepsilon}(S^*)$ and converges toward $S^*$. \qed
\end{proof}
Note that if $S(t)$ is close to $S^*$, then the convergence rate~\eqref{eq:atttaction-rate} of $S(t)$ is approximately governed by
\begin{equation} \label{eq:convergence-rate}
\beta_i \approx \min_{j \neq j^*(i)}~\big( (\Omega S^*)_{i j^*(i)} - (\Omega S^*)_{ij} \big).
\end{equation}

Proposition~\ref{prop:attraction} provides a criterion for terminating the numerical integration of the $S$-flow and subsequent `safe' rounding to an integral solution. For this purpose, the following proposition provides an estimate of $\veps$ defining~\eqref{eq:attraction-B-veps}.
\begin{proposition} \label{prop:eps_est}
Let $S^* \in \Wstar$ satisfy~\eqref{eq:stable}. A value $\varepsilon > 0$ that is sufficient small for the inclusion~\eqref{eq:attraction-B-veps} to hold, is given by
\begin{equation} \label{eq:eps_est}
\varepsilon_{\mathrm{est}} = \min_{i \in I}~\min_{j \neq j^*(i)}~2 \cdot \frac{(\Omega S^*)_{i j^*(i)} - (\Omega S^*)_{ij}}{(\Omega \eins_{m} )_{i} + (\Omega S^*)_{i j^*(i)} - (\Omega S^*)_{ij}} > 0.
\end{equation}
\end{proposition}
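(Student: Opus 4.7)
The plan is to show, for an arbitrary $S \in B_{\varepsilon_{\mathrm{est}}}(S^*)$, that $(\Omega S)_{ij} < (\Omega S)_{i j^*(i)}$ holds for every $i \in I$ and every $j \neq j^*(i)$; by the definition~\eqref{eq:attraction-AS*} of $A(S^*)$ this is exactly the required inclusion $B_{\varepsilon_{\mathrm{est}}}(S^*) \subset A(S^*)\cap\ol{\mathcal{W}}$. Positivity of $\varepsilon_{\mathrm{est}}$ itself is immediate from~\eqref{eq:stable} together with nonnegativity of $\Omega$, so the entire content is the above strict inequality.

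First, I would write $S_k = S_k^* + \Delta_k$ with $S_k^* = e_{j^*(k)}$. Because $\Delta_k \in T_0$, one has $\sum_j \Delta_{kj}=0$, and setting $\alpha_k := 1 - S_{k j^*(k)} \ge 0$ gives $\|S_k - S_k^*\|_1 = 2\alpha_k$, so that $S \in B_{\varepsilon_{\mathrm{est}}}(S^*)$ translates into $\alpha_k < \varepsilon_{\mathrm{est}}/2$ for every $k$. Using $S^*_{kl} = [l = j^*(k)]$ and expanding, one obtains the identity
\begin{equation}
(\Omega S)_{ij} - (\Omega S)_{i j^*(i)} = -\bigl((\Omega S^*)_{i j^*(i)} - (\Omega S^*)_{ij}\bigr) + \sum_{k} \omega_{ik}\bigl(\Delta_{kj} - \Delta_{k j^*(i)}\bigr),
\end{equation}
so it suffices to bound the second term strictly from above by $(\Omega S^*)_{i j^*(i)} - (\Omega S^*)_{ij}$.

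Second, I would split the sum over $k$ according to the three mutually exclusive classes $j^*(k) = j^*(i)$, $j^*(k) = j$, and $j^*(k) \notin\{j, j^*(i)\}$. A short case check using $S_{k j^*(k)} = 1-\alpha_k$ and $0\le S_{kl}\le \alpha_k$ for $l\neq j^*(k)$ gives the pointwise bounds $\Delta_{kj} - \Delta_{k j^*(i)} \le 2\alpha_k$, $\le 0$, and $\le \alpha_k$, respectively. Inserting $\alpha_k < \varepsilon_{\mathrm{est}}/2$ and regrouping the weights via $\sum_{k\colon j^*(k)=l}\omega_{ik} = (\Omega S^*)_{il}$ and $\sum_k \omega_{ik} = (\Omega\eins_m)_i$ yields
\begin{equation}
\sum_k \omega_{ik}\bigl(\Delta_{kj} - \Delta_{k j^*(i)}\bigr) < \tfrac{\varepsilon_{\mathrm{est}}}{2}\bigl((\Omega\eins_m)_i + (\Omega S^*)_{i j^*(i)} - (\Omega S^*)_{ij}\bigr).
\end{equation}

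Third, the definition of $\varepsilon_{\mathrm{est}}$ in~\eqref{eq:eps_est} was arranged precisely so that the right-hand side is at most $(\Omega S^*)_{i j^*(i)} - (\Omega S^*)_{ij}$; plugging this back into the identity from the first step makes $(\Omega S)_{ij} - (\Omega S)_{i j^*(i)}$ strictly negative, which concludes the proof. The main obstacle is the case analysis in step two: the naive bound $|\Delta_{kj}|<\varepsilon/2$ per component leads to the strictly weaker estimate $\varepsilon<((\Omega S^*)_{i j^*(i)}-(\Omega S^*)_{ij})/(\Omega\eins_m)_i$, so one has to exploit that whenever $j^*(k)=j$ the contribution is actually nonpositive in order to replace $(\Omega\eins_m)_i$ by the smaller quantity $(\Omega\eins_m)_i + (\Omega S^*)_{i j^*(i)} - (\Omega S^*)_{ij}$ appearing in~\eqref{eq:eps_est}.
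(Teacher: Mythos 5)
Your proposal is correct and follows essentially the same route as the paper's proof: after the substitution $S=S^*+\Delta$ your three-class case analysis on $k$ (according to $j^*(k)=j^*(i)$, $j^*(k)=j$, or neither) and the resulting bound $\tfrac{\varepsilon}{2}\bigl((\Omega\eins_m)_i+(\Omega S^*)_{ij^*(i)}-(\Omega S^*)_{ij}\bigr)$ reproduce exactly the paper's splitting of $(\Omega S)_{ij^*(i)}-(\Omega S)_{ij}$ and its final estimate. The only difference is cosmetic (working with the increments $\Delta_k$ rather than with $S_k$ directly), and your closing remark about exploiting the nonpositive contribution of the class $j^*(k)=j$ is precisely the step the paper uses to obtain the denominator in~\eqref{eq:eps_est}.
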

\begin{proof}
Let $S \in \ol{\mathcal{W}}$ be a point such that
\begin{equation}
\max_{i \in I} \| S_i - S_i^* \|_1 < \varepsilon = \varepsilon_{\mathrm{est}}.
\end{equation}
We have to show that $S \in A(S^{\ast})$, with $A(S^{\ast})$ given by~\eqref{eq:attraction-AS*}.

Since $\| S_i - S_i^* \|_1 = 2 - 2 S_{i j^*(i)}$, we have
\begin{subequations}
\begin{align}
S_{i j^*(i)} &> 1 - \frac{\varepsilon}{2}, \qquad\qquad
S_{i j} \leq \sum_{l \neq j^*(i)} S_{il} = 1 - S_{i j^*(i)} < \frac{\varepsilon}{2}, \quad \forall j \neq j^*(i).
\end{align}
\end{subequations}
Hence, for any $i \in I$ and any $j \neq j^*(i)$, we get with $j^{\ast}(k),\,k \in I$ similarly defined as $j^{\ast}(i)$ in~\eqref{eq:stable}, 
\begin{subequations}
\begin{align}
(\Omega S&)_{i j^*(i)} - (\Omega S)_{ij}
\overset{\eqref{eq:def-Omega}}{=}
\sum_{k \in\mc{N}_{i}} \omega_{i k} S_{k j^*(i)} - \sum_{k  \in \mc{N}_{i}} \omega_{i k} S_{kj}
\\
	&= \sum_{\substack{k \in \mc{N}_{i} \\ \mathclap{j^*(k) = j^*(i)}}} \omega_{i k} \overbrace{ S_{k j^*(i)} }^{> 1 - \tfrac{\varepsilon}{2}}
	      + \sum_{\substack{k \in \mc{N}_{i} \\ \mathclap{j^*(k) \neq j^*(i)}}} \omega_{i k} \overbrace{ S_{k j^*(i)} }^{\geq 0}
	- \sum_{\substack{k  \in \mc{N}_{i} \\ \mathclap{j^*(k) = j}}} \omega_{i k} \overbrace{ S_{kj} }^{\leq 1}
	      - \sum_{\substack{k  \in \mc{N}_{i} \\ \mathclap{j^*(k) \neq j}}} \omega_{i k} \overbrace{ S_{kj} }^{< \tfrac{\varepsilon}{2}},
 \intertext{and by dropping the second nonnegative summand,}
	&> \Big(1 - \frac{\varepsilon}{2} \Big) \sum_{\substack{k \in \mc{N}_{i} \\ \mathclap{j^*(k) = j^*(i)}}} \omega_{i k} - \sum_{\substack{k  \in \mc{N}_{i} \\ \mathclap{j^*(k) = j}}} \omega_{i k} - \frac{\varepsilon}{2} \sum_{\substack{k  \in \mc{N}_{i} \\ \mathclap{j^*(k) \neq j}}} \omega_{i k}
\intertext{and using the subvectors of $S^{\ast}$ are unit vectors,}
	&= \Big(1 - \frac{\varepsilon}{2} \Big) (\Omega S^*)_{i j^*(i)} - (\Omega S^*)_{ij} - \frac{\varepsilon}{2} \big( ( \Omega \eins_{m} )_{i} - (\Omega S^*)_{ij} \big) \\
	&= (\Omega S^*)_{i j^*(i)} - (\Omega S^*)_{ij} - \frac{\varepsilon}{2} \Big( ( \Omega \eins_{m} )_{i} + (\Omega S^*)_{i j^*(i)} - (\Omega S^*)_{ij} \Big) \\
	&\overset{\mathclap{\eqref{eq:eps_est}}}{\geq}\; 0.
\end{align}
\end{subequations}
This verifies $S \in A(S^*)$. \qed
\end{proof}

Figure~\ref{fig:attraction-region} illustrates the sets $A(S^*)$ and $B_{\varepsilon}(S^*)$ defined by~\eqref{eq:attraction-AS*} and~\eqref{eq:attraction-B-veps}, for some examples in the simple case of two data points and two labels.
The beige and green regions in the left panel illustrate that the condition $S(t_0) \in A(S^*)$ neither guarantees that the $S$-flow converges to $S^*$ nor to stay in $A(S^*)$.
This demonstrates the need for the sets $B_{\varepsilon}(S^*)$, shown as shaded squares in Figure~\ref{fig:attraction-region}. Note that $B_{\varepsilon}(S^*) \neq \emptyset$ only if $S^* \in A(S^*) \neq\emptyset$, i.e.\ if the stability condition~\eqref{eq:stable} is fulfilled.

\begin{figure}[htb]
\begin{center}
\begin{tabular}{ccc}
\quad $\Omega = \begin{pmatrix} 0.55 & 0.45 \\ 0.25 & 0.75 \end{pmatrix}$ &
\quad $\Omega = \begin{pmatrix} 0.5 & 0.5 \\ 0.5 & 0.5 \end{pmatrix}$ &
\quad $\Omega = \begin{pmatrix} 0.25 & 0.75 \\ 0.75 & 0.25 \end{pmatrix}$ \\[2ex]
\includegraphics[width=0.29\textwidth]{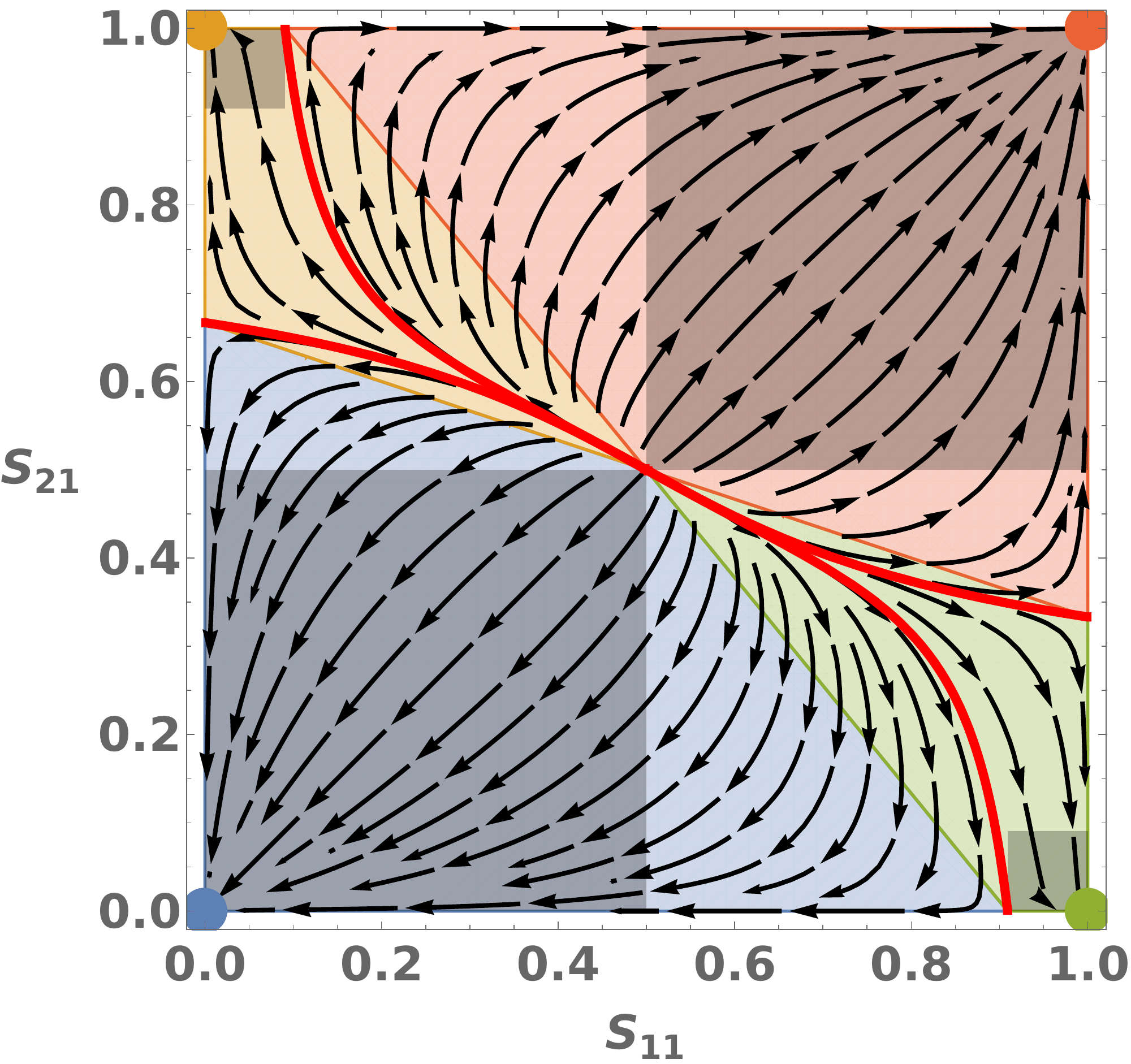} &
\includegraphics[width=0.29\textwidth]{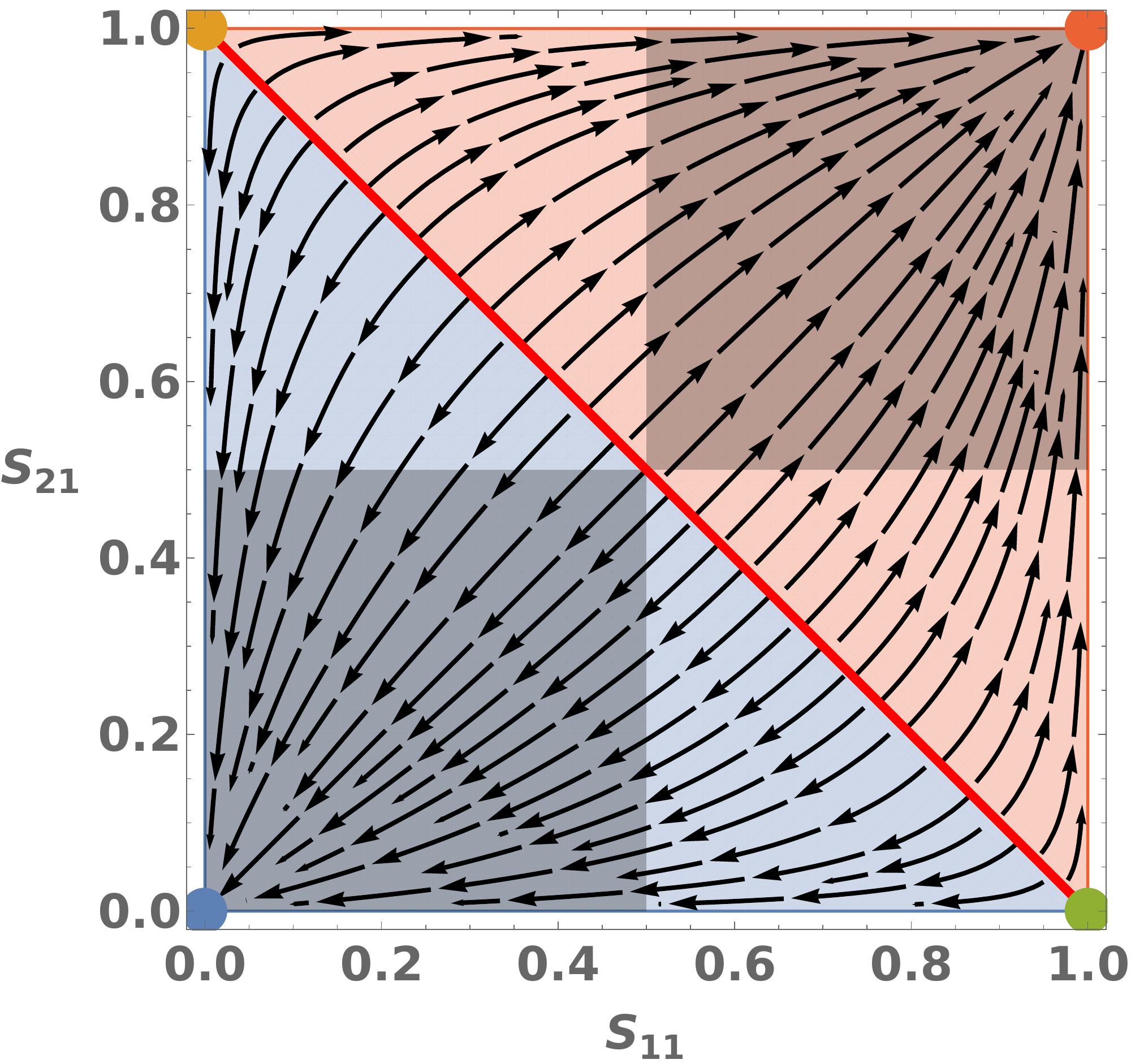} &
\includegraphics[width=0.29\textwidth]{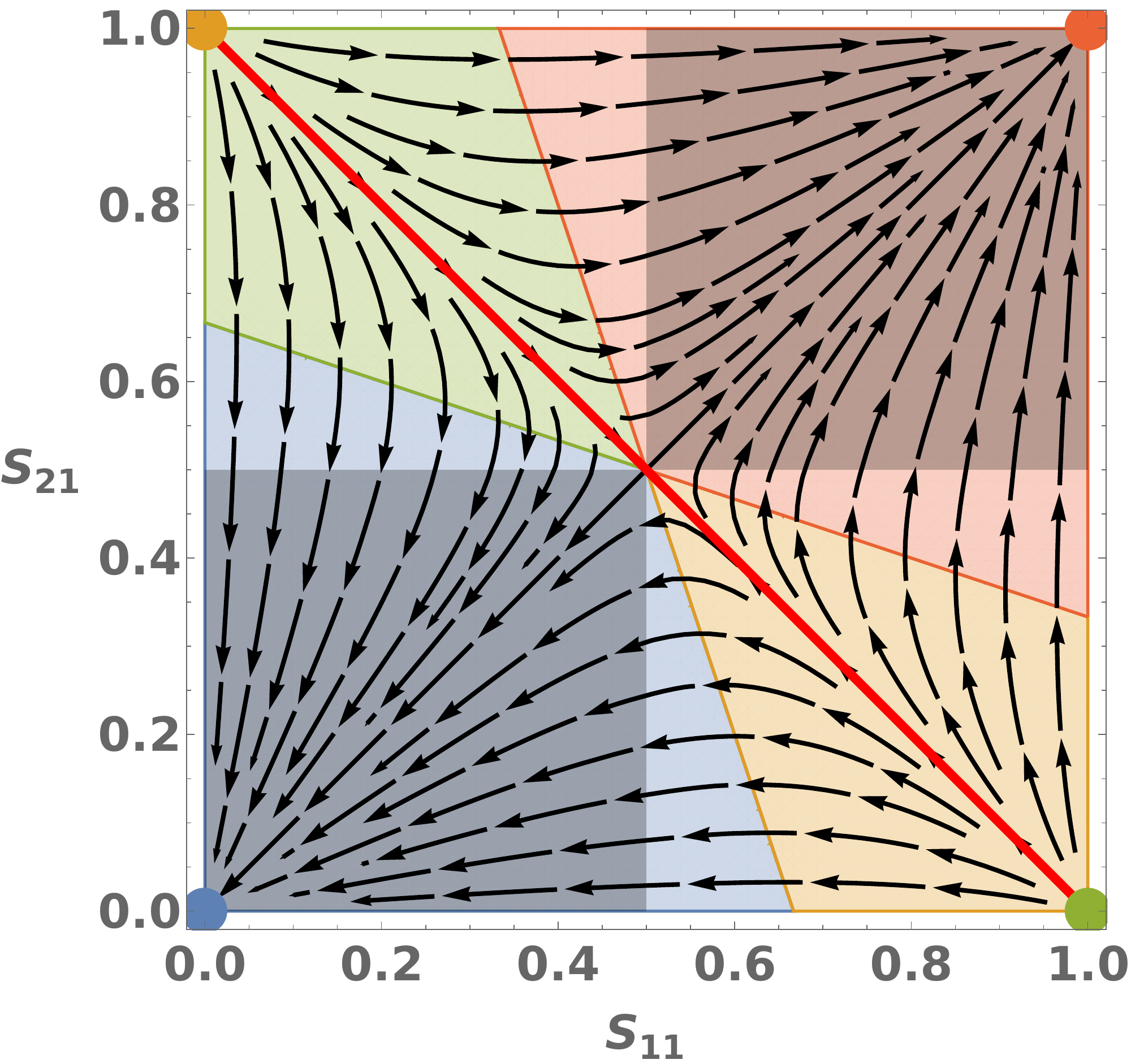}
\end{tabular}
\end{center}
\caption[Illustration of the approximation of the basins of attraction for the case \boldmath$|I|=|J|=2$.]{
\textbf{Illustration of the approximation of the basins of attraction for the case \boldmath$|I|=|J|=2$.}
The plots show the phase portrait of the $S$-flow~\eqref{eq:def-S-flow-F} for three different row-stochastic matrices $\Omega$. The four points $S^* \in \Wstar$ marked with
$\{
\tikz[color={rgb,255:red,94; green,129; blue,181},baseline=-0.625ex]{\fill (0,0) circle(0.75ex);},
\tikz[color={rgb,255:red,225; green,156; blue,36},baseline=-0.625ex]{\fill (0,0) circle(0.75ex);},
\tikz[color={rgb,255:red,142; green,176; blue,50},baseline=-0.625ex]{\fill (0,0) circle(0.75ex);},
\tikz[color={rgb,255:red,235; green,98; blue,53},baseline=-0.625ex]{\fill (0,0) circle(0.75ex);}
\}$,
the corresponding sets $A(S^*)$~\eqref{eq:attraction-AS*} are shown as colored regions, and the balls $B_{\varepsilon}(S^*)$~\eqref{eq:attraction-B-veps} around the equilibria for which convergence to the equilibria is guaranteed are shown as shaded squares, with $\varepsilon = \varepsilon_{\text{est}}(S^*, \Omega)$ from~\eqref{eq:eps_est}.
Finally, the boundary between the basins of attraction is marked with a thick red curve.
In the center and right panel, only the constant labelings $S^{\ast} \in \{\begin{psmallmatrix}0&1\\0&1\end{psmallmatrix},\begin{psmallmatrix}1&0\\1&0\end{psmallmatrix}\}
$ fulfill the stability criterion~\eqref{eq:stable}, i.e.\ $S^* \in A(S^*)$.
As for the other two points $S^* \in \Wstar$, we have either $A(S^*) = \emptyset$ (center panel) or $S^* \not\in A(S^*) \neq \emptyset$ (right panel).} \label{fig:attraction-region}
\end{figure}

If \textit{uniform} weights $\Omega$ are used for averaging, then the estimate~\eqref{eq:eps_est} can be cast into a simple form that no longer depends on  $S^*$.
\begin{corollary} \label{prop:eps_unif}
Let $\Omega$ defined by~\eqref{eq:def-Omega} be given by uniform weights $\omega_{ik} = \tfrac{1}{|\mathcal{N}_i |}$, ${k \in \mc{N}_{i}}$, ${i \in I}$. 
Then the value $\veps > 0$ that achieves the inclusion~\eqref{eq:attraction-B-veps} can be chosen as
\begin{equation} \label{eq:eps_unif}
\varepsilon_{\mathrm{unif}} = \frac{2}{1 + \max_{i \in I} |\mathcal{N}_i|} > 0.
\end{equation}
\end{corollary}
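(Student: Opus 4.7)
The plan is to obtain this as a direct specialization of Proposition~\ref{prop:eps_est}: I will compute $(\Omega \eins_{m})_{i}$ and lower bound $(\Omega S^{*})_{i j^{*}(i)} - (\Omega S^{*})_{ij}$ explicitly under the uniform weight assumption, and then exploit monotonicity of the map $x \mapsto \tfrac{2x}{1+x}$ to show $\veps_{\text{unif}} \leq \veps_{\text{est}}$. Since any $\veps$ smaller than $\veps_{\text{est}}$ is likewise admissible in~\eqref{eq:attraction-B-veps} (because $B_{\veps}(S^{*}) \subset B_{\veps_{\text{est}}}(S^{*}) \subset A(S^{*}) \cap \ol{\mc{W}}$), this will immediately yield the claim.

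First, with uniform weights $\w_{ik} = \tfrac{1}{|\mc{N}_{i}|}$ for $k \in \mc{N}_{i}$, I compute $(\Omega \eins_{m})_{i} = \sum_{k \in \mc{N}_{i}} \tfrac{1}{|\mc{N}_{i}|} = 1$ for every $i \in I$. Next, since $S^{*} \in \Wstar$, each subvector $S_{k}^{*}$ is a unit vector $e_{j^{*}(k)}$, so
\begin{equation}
(\Omega S^{*})_{ij} = \tfrac{1}{|\mc{N}_{i}|} \cdot \#\{k \in \mc{N}_{i} \colon j^{*}(k) = j\}, \qquad j \in J.
\end{equation}
Consequently $(\Omega S^{*})_{i j^{*}(i)} - (\Omega S^{*})_{ij}$ is a rational number with denominator $|\mc{N}_{i}|$, which by the stability hypothesis~\eqref{eq:stable} is strictly positive; hence
\begin{equation}
(\Omega S^{*})_{i j^{*}(i)} - (\Omega S^{*})_{ij} \;\geq\; \tfrac{1}{|\mc{N}_{i}|}, \qquad \forall i \in I,\ \forall j \neq j^{*}(i).
\end{equation}

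Finally, the real function $\varphi(x) = \tfrac{2x}{1+x}$ is strictly increasing on $[0,\infty)$. Inserting the above lower bound and $(\Omega \eins_{m})_{i} = 1$ into formula~\eqref{eq:eps_est} gives
\begin{equation}
\veps_{\text{est}} \;=\; \min_{i \in I} \min_{j \neq j^{*}(i)} \varphi\bigl((\Omega S^{*})_{i j^{*}(i)} - (\Omega S^{*})_{ij}\bigr) \;\geq\; \min_{i \in I} \varphi\bigl(\tfrac{1}{|\mc{N}_{i}|}\bigr) \;=\; \min_{i \in I} \tfrac{2}{|\mc{N}_{i}| + 1} \;=\; \veps_{\text{unif}}.
\end{equation}
Thus $B_{\veps_{\text{unif}}}(S^{*}) \subset B_{\veps_{\text{est}}}(S^{*}) \subset A(S^{*}) \cap \ol{\mc{W}}$, which is exactly the inclusion~\eqref{eq:attraction-B-veps} required. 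The positivity $\veps_{\text{unif}} > 0$ is obvious from the definition.

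Essentially no step is a serious obstacle; the only thing to verify carefully is the integrality argument giving the uniform lower bound $\tfrac{1}{|\mc{N}_{i}|}$ on the difference $(\Omega S^{*})_{i j^{*}(i)} - (\Omega S^{*})_{ij}$, which is where the integrality of $S^{*} \in \Wstar$ and the uniformity of $\Omega$ combine. Everything else is routine bookkeeping combined with the monotonicity of $\varphi$ and the fact that the bound in Proposition~\ref{prop:eps_est} is an upper bound on an admissible $\veps$.
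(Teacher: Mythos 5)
Your proof is correct and follows essentially the same route as the paper's: both establish the lower bound $(\Omega S^{*})_{i j^{*}(i)} - (\Omega S^{*})_{ij} \geq \tfrac{1}{|\mathcal{N}_i|}$ via integrality of the counting numerator under the stability hypothesis, then apply monotonicity of $x \mapsto \tfrac{2x}{1+x}$ to conclude $\varepsilon_{\mathrm{unif}} \leq \varepsilon_{\mathrm{est}}$ and invoke Proposition~\ref{prop:eps_est}. Your additional explicit remarks (that $(\Omega \eins_m)_i = 1$ and that any smaller $\varepsilon$ remains admissible) are correct and merely make explicit what the paper leaves implicit.
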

\begin{proof}
Let $j^{\ast}(i)$ be defined as in~\eqref{eq:stable}.
We have

\begin{subequations}
\begin{align}
(\Omega S^*)_{i j^*(i)} - (\Omega S^*)_{ij} 
	&= \frac{| \{ k \in \mathcal{N}_i \colon j^*(k) = j^*(i) \} | - | \{ k \in \mathcal{N}_i \colon j^*(k) = j \} |}{| \mathcal{N}_i |} \label{eq:eps-unif-int}\\
	&\geq \frac{1}{| \mathcal{N}_i |} > 0
\end{align}
\end{subequations}
by assumption and integrality of the numerator in \eqref{eq:eps-unif-int}.
Monotonicity of the function $x \mapsto \frac{x}{1+x}$ implies
\begin{equation}
2 \cdot \frac{(\Omega S^*)_{i j^*(i)} - (\Omega S^*)_{ij}}{1 + (\Omega S^*)_{i j^*(i)} - (\Omega S^*)_{ij}} \geq 2 \cdot \frac{\frac{1}{| \mathcal{N}_i |}}{1 + \frac{1}{| \mathcal{N}_i |}} = \frac{2}{1 + | \mathcal{N}_i |}
\end{equation}
and hence $\varepsilon_{\mathrm{unif}} \leq \varepsilon_{\mathrm{est}}$, with $\varepsilon_{\mathrm{est}}$ given by~\eqref{eq:eps_est}.
The assertion, therefore, follows from Proposition~\ref{prop:eps_est}. \qed
\end{proof}
%


\subsection{Convergence Properties of the Linear Assignment Flow}\label{ssec:ConvergencePropertiesLAF}

This section analyzes the convergence of the linear assignment flow to equilibria
and limit points.
To apply the standard theory, we rewrite the matrix-valued ($V \in \R^{m \times n}$)
equation of the linear assignment flow~\eqref{eq:LAF} into a vector-valued
($V \in \R^{m n}$) one, using again $V$, for simplicity.

Equation~\eqref{eq:LAF} then takes the form
\begin{subequations}\label{eq:LAF-2}
\begin{align}\label{eq:LAF-2-dot-V}
    \dot{V} &= A V + b, \quad V(0) = 0,
    \intertext{where} \label{eq:LAF-2-A}
    A &= R_{\widehat{S}}(\Omega \otimes I_n).
\end{align}
\end{subequations}
Note that matrix $A$ is exactly the second summand in the Jacobian~\eqref{eq:Jacobian-S-Flow} of the \mbox{$S$-flow}.
The first summand of~\eqref{eq:Jacobian-S-Flow} is due to the dependence of the replicator matrix on the flow. The linear assignment flow~\eqref{eq:LAF} ignores this dependency by assuming $\widehat S \in \mc{W}$ to be \textit{fixed}.

The following Lemma says that under the assumption $b \in \mathcal{R}(A)$, the asymptotic properties of~\eqref{eq:LAF-2-dot-V} can be inferred from the homogeneous system.
\begin{lemma}\label{lmm:additive-to-initial}
    Let $\Psi_{A,b,V_0}(t)$ denote the flow of the dynamical system~\eqref{eq:LAF-2}
    but with initial condition $V(0) = V_0$ and assume $b \in \mathcal{R}(A)$.
    Then the equation $\Psi_{A,b,V_0}(t) = \Psi_{A,0,V_0+A^+b}(t)-A^+b$ holds, where
    $A^+$ denotes the pseudoinverse of $A$.
\end{lemma}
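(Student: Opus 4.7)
The plan is to reduce the inhomogeneous linear system~\eqref{eq:LAF-2} to the homogeneous one by a constant shift of the state variable, and then invoke uniqueness of solutions to linear ODEs. Concretely, I would introduce the new variable
\begin{equation}
\widetilde V(t) = V(t) + A^{+}b
\end{equation}
and compute
\begin{equation}
\dot{\widetilde V} = \dot V = AV + b = A(\widetilde V - A^{+}b) + b = A\widetilde V + (b - AA^{+}b).
\end{equation}
At this point I would use the standing assumption $b \in \mathcal{R}(A)$, together with the defining property of the Moore--Penrose pseudoinverse that $AA^{+}$ is the orthogonal projector onto $\mathcal{R}(A)$. This gives $AA^{+}b = b$, so the residual term vanishes and $\widetilde V$ satisfies the homogeneous equation $\dot{\widetilde V} = A\widetilde V$.

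Next I would check the initial condition: from $V(0) = V_0$ we get $\widetilde V(0) = V_0 + A^{+}b$. Hence $\widetilde V(t)$ is precisely the solution of~\eqref{eq:LAF-2} with $b$ replaced by $0$ and starting at $V_0 + A^{+}b$, i.e.\ $\widetilde V(t) = \Psi_{A,0,V_0 + A^{+}b}(t)$, where uniqueness of solutions to the linear Cauchy problem (the right-hand side is globally Lipschitz) guarantees that $\widetilde V$ coincides with this flow. Undoing the shift yields
\begin{equation}
\Psi_{A,b,V_0}(t) = V(t) = \widetilde V(t) - A^{+}b = \Psi_{A,0,V_0 + A^{+}b}(t) - A^{+}b,
\end{equation}
which is the claimed identity.

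The argument is essentially a one-line calculation; there is no real obstacle. The only step that requires a moment's thought is the identity $AA^{+}b = b$, which is exactly where the hypothesis $b \in \mathcal{R}(A)$ enters and which fails without it (in general $AA^{+}$ is only the projection onto $\mathcal{R}(A)$, so $AA^{+}b = b$ iff $b \in \mathcal{R}(A)$). Everything else is routine linear ODE theory.
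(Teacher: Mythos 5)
Your proof is correct. It takes a slightly different route from the paper: you shift coordinates by setting $\widetilde V(t)=V(t)+A^{+}b$, observe that the residual $b-AA^{+}b$ vanishes because $b\in\mathcal{R}(A)$, and then invoke uniqueness for the linear Cauchy problem to identify $\widetilde V$ with the homogeneous flow. The paper instead writes the solution explicitly via Duhamel's (variation-of-constants) formula, substitutes $b=AA^{+}b$ inside the integral, and evaluates $\int_0^t e^{(t-\tau)A}A\,\D\tau=e^{tA}-I$ to regroup the terms. Both arguments pivot on exactly the same identity $AA^{+}b=b$, which you correctly flag as the one place the hypothesis $b\in\mathcal{R}(A)$ is used. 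Your substitution argument is marginally more economical in that it needs neither the explicit solution formula nor the integral evaluation, and it would transfer unchanged to settings where one prefers not to write down $e^{tA}$ explicitly; the paper's computation has the minor advantage of exhibiting the solution in closed form along the way. There is no gap in your argument.
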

\begin{proof}
    For $b \in \mathcal{R}(A)$ we have $A A^+ b = b$ and therefore with Duhamel's formula~\cite[p.~72]{Teschl:2012aa}
    \begin{subequations}
        \begin{alignat}{1}
            \Psi_{A,b,V_0}(t) &= e^{t A} V_0 + \int_0^t e^{(t-\tau) A} b\ \D \tau = e^{t A} V_0 + \int_0^t e^{(t-\tau) A} A\ \D \tau A^+ b \\
            &= e^{t A} V_0 + (e^{t A} - I_n) A^+ b = \Psi_{A,0,V_0+A^+b}(t)-A^+b.
        \end{alignat}
    \end{subequations} \qed
\end{proof}

As the translation of the flow by  $-A^+b$ does not change the convergence properties
(except for translating the equilibria), we can focus on the corresponding homogeneous system
\begin{alignat}{1}\label{eq:LAF-homogeneous}
    \dot{V} = A V,\quad V(0) = V_0.
\end{alignat}
Using the eigensystem of $A$, the solution to~\eqref{eq:LAF-homogeneous} can be represented in the following well-known way.
\begin{lemma}\label{lmm:LAF-trajectory}
    Let $A$ be a diagonalizable matrix with eigenvalues $\lambda_i$,
    corresponding eigenvectors $v_i$.
    Further let $V_0 = \sum_i c_i v_i$ with $c_i \in \R$.
    The solution of the linear dynamical system~\eqref{eq:LAF-homogeneous} can
    be written as
    \begin{alignat}{1}
        V(t) = \sum_i c_i e^{\lambda_i t} v_i.
    \end{alignat}
    Without loss of generality, let $\lambda_1$ be the dominant eigenvalue, i.e.\ the eigenvalue with
    maximal real part.
    If $\lambda_1$ is unique and $c_1 \neq 0$, then
    \begin{alignat}{1}
        \lim_{t \to \infty} V(t) = \lim_{t \to \infty} c_1 e^{\lambda_1 t} v_1.
    \end{alignat}
    The hyperplane of initial values with $c_1 = 0$ separates two half-spaces
    which are the regions of attraction for the limit points in the directions $v_{1}$
    and $-v_{1}$, respectively.
\end{lemma}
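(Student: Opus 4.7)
\medskip
\noindent\textbf{Proof plan for Lemma~\ref{lmm:LAF-trajectory}.}
My plan is to split the statement into three independent pieces and handle each by a short, standard computation. First I would verify the closed-form solution, then extract the dominant-mode asymptotics, and finally identify the separating hyperplane.

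For the solution formula, I exploit linearity of~\eqref{eq:LAF-homogeneous}. Since $A v_i = \lambda_i v_i$, each curve $t \mapsto c_i e^{\lambda_i t} v_i$ satisfies $\tfrac{\D}{\D t}(c_i e^{\lambda_i t} v_i) = A (c_i e^{\lambda_i t} v_i)$, so by linearity $V(t) = \sum_i c_i e^{\lambda_i t} v_i$ solves $\dot V = A V$; the initial condition $V(0) = \sum_i c_i v_i = V_0$ holds by the assumed expansion. Uniqueness of the trajectory is immediate from global Lipschitz continuity of the linear vector field and Picard--Lindel\"of~\cite[Corollary~2.16]{Teschl:2012aa}. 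Diagonalizability is used here only to guarantee that such an eigenbasis expansion of $V_0$ actually exists.

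For the dominant-mode statement, I would factor out the leading exponential:
\begin{equation}
V(t) - c_1 e^{\lambda_1 t} v_1 \;=\; \sum_{i \geq 2} c_i e^{\lambda_i t} v_i.
\end{equation}
By assumption $\Re(\lambda_i) < \Re(\lambda_1)$ for all $i \geq 2$, so with $\mu := \max_{i \geq 2}\Re(\lambda_i) < \Re(\lambda_1)$ one obtains $\|V(t) - c_1 e^{\lambda_1 t} v_1\| \leq e^{\mu t} \sum_{i\geq 2} |c_i|\, \|v_i\|$. Dividing by $|e^{\lambda_1 t}|$ yields $V(t) = c_1 e^{\lambda_1 t} v_1 \bigl(1 + o(1)\bigr)$ as $t\to\infty$, which is the precise meaning of the displayed asymptotic equality in the lemma.

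For the separation statement, observe that the coordinate $c_1$ is a real linear functional of $V_0$ (the projection onto $v_1$ along $\Span(v_2,\dotsc,v_{mn})$, which is well defined because $A$ is diagonalizable). Its zero set $H = \{V_0 : c_1 = 0\} = \Span(v_2,\dotsc,v_{mn})$ is a hyperplane, and the two open half-spaces $\{c_1 > 0\}$ and $\{c_1 < 0\}$ are precisely the initial values for which the leading term $c_1 e^{\lambda_1 t} v_1$ points in direction $v_1$ and $-v_1$, respectively; combined with the asymptotic in the previous paragraph, $V(t)/\|V(t)\| \to \pm v_1/\|v_1\|$ on the two half-spaces.

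The only delicate point -- and the one I would be careful to phrase correctly in writing -- is that the displayed ``$\lim V(t) = \lim c_1 e^{\lambda_1 t} v_1$'' is informal when $\Re(\lambda_1) \geq 0$, since neither limit exists in the naive sense. The rigorous content is the asymptotic equivalence $V(t) \sim c_1 e^{\lambda_1 t} v_1$ together with the direction convergence on each half-space; this is what my argument above actually establishes. Apart from this mild reinterpretation, no obstacle arises: everything reduces to spectral decomposition of a constant matrix.
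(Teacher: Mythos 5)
Your proof is correct and follows exactly the standard eigendecomposition argument that the paper itself invokes without proof (the lemma is stated as a ``well-known'' representation and no proof is given in the paper). Your closing remark is also well taken: the displayed equality of limits is indeed informal when $\Re(\lambda_1) \geq 0$, and the asymptotic equivalence $V(t) = c_1 e^{\lambda_1 t} v_1\,(1+o(1))$ together with the convergence of $V(t)/\|V(t)\|$ on each half-space is the rigorous content actually used downstream in Proposition~\ref{prop:LAF-A-1} and Corollary~\ref{cor:integral-assignment-LAF}.
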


Lemma~\ref{lmm:LAF-trajectory} implies the following properties of the system~\eqref{eq:LAF-homogeneous}.

\begin{proposition}\label{prop:LAF-A-1}
Any linear dynamical system of the form~\eqref{eq:LAF-homogeneous} with diagonalizable $A$
has the following properties
    \begin{enumerate}[(a)]
        \item If $A$ has an eigenvalue with positive real part, then any finite equilibrium is unstable and the set of initial points converging to these equilibria is a null set.
        \item If all eigenvalues of $A$ are real, then the trajectory does not spiral
            around a subspace through the origin infinitely often, i.e.\ $0$ neither is a spiral sink nor a spiral source.
        \item The set of equilibria is the nullspace $\mc{N}(A)$.
        \item The stable (resp. unstable) manifold is spanned by the
            eigenvectors of $A$ corresponding to eigenvalues with
            negative (resp. positive) real part. All initial points which do not belong to the center-stable manifold
            diverge to infinity.
    \end{enumerate}
\end{proposition}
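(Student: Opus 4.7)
My plan is to reduce all four assertions to the explicit modal representation from Lemma~\ref{lmm:LAF-trajectory}. Since $A$ is diagonalizable, pick a basis of eigenvectors $v_{1},\dotsc,v_{N}$ with eigenvalues $\lambda_{1},\dotsc,\lambda_{N}$ (repeated with multiplicity) and write $V_{0}=\sum_{i} c_{i} v_{i}$, so that
\begin{equation*}
V(t) = \sum_{i} c_{i} e^{\lambda_{i} t} v_{i}.
\end{equation*}
This representation is the single ingredient driving everything; the four items are essentially independent book-keeping on the modes $c_{i} e^{\lambda_{i} t} v_{i}$.

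For (c), note that $V^{\ast}$ is an equilibrium iff $A V^{\ast}=0$, so the set of equilibria is exactly $\mc{N}(A)=\Span\{v_{i}\colon \lambda_{i}=0\}$. Part (d) follows directly from the decomposition: modes with $\Re\lambda_{i}<0$ decay, modes with $\Re\lambda_{i}=0$ stay bounded, and modes with $\Re\lambda_{i}>0$ blow up. Hence the stable manifold is $E_{s}:=\Span\{v_{i}\colon \Re\lambda_{i}<0\}$, the unstable manifold is $E_{u}:=\Span\{v_{i}\colon \Re\lambda_{i}>0\}$, and the center-stable manifold is $E_{cs}=E_{s}\oplus E_{c}$ with $E_{c}=\Span\{v_{i}\colon \Re\lambda_{i}=0\}$. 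If $V_{0}\notin E_{cs}$, then at least one coefficient $c_{i}$ with $\Re\lambda_{i}>0$ is nonzero, and selecting the term of largest real part shows $\|V(t)\|\to\infty$.

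For (a), suppose $\Re\lambda_{k}>0$ for some $k$ and let $V^{\ast}\in\mc{N}(A)$. Since translation by $V^{\ast}$ preserves the linear system (the difference $V(t)-V^{\ast}$ again satisfies $\dot{U}=AU$), arbitrarily small perturbations $V_{0}=V^{\ast}+\delta v_{k}$ produce trajectories with $\|V(t)-V^{\ast}\|=|\delta|e^{\Re\lambda_{k} t}\|v_{k}\|\to\infty$, so every equilibrium is unstable. The initial conditions converging to an equilibrium must satisfy $c_{i}=0$ for every $i$ with $\Re\lambda_{i}>0$, i.e., $V_{0}$ must lie in $E_{s}\oplus E_{c}$, a proper linear subspace of the ambient space (since $E_{u}\neq\{0\}$) and hence a Lebesgue null set.

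Finally for (b), if all $\lambda_{i}\in\R$, then each mode $c_{i} e^{\lambda_{i} t} v_{i}$ is monotone in $t$ and contains no oscillatory factor; grouping the modes by eigenvalue and selecting the one of largest $\lambda_{i}$ with $c_{i}\neq 0$ shows $V(t)/\|V(t)\|$ converges to $\pm v_{i}/\|v_{i}\|$ as $t\to\infty$ (and analogously as $t\to -\infty$), so the trajectory approaches a ray and cannot wind around any subspace infinitely often; in particular $0$ is neither a spiral sink nor a spiral source. The only mildly delicate point is making `spiral around a subspace through the origin' precise, but the absence of any periodic factor $\cos(\beta t),\sin(\beta t)$ in the real modal representation makes this essentially automatic, which is why I expect no serious obstacle in this proof beyond carefully bookkeeping the eigenspace decomposition. \qed
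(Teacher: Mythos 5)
Your proposal is correct and follows exactly the route the paper intends: the paper gives no written proof of Proposition~\ref{prop:LAF-A-1}, stating only that it is implied by the modal representation of Lemma~\ref{lmm:LAF-trajectory}, which is precisely the single ingredient you use for all four items. The only cosmetic caveat is in (a): for a complex eigenvalue one should perturb by $\delta\,\Re(v_{k})$ and bound $\|V(t)-V^{\ast}\|$ from below by a constant times $e^{\Re\lambda_{k}t}$ rather than claiming exact equality, but this does not affect the argument.
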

The following proposition complements Proposition~\ref{prop:LAF-A-1} by examining the spectrum of the matrix $A$.
\begin{proposition}\label{prop:LAF-A-2}
    Let $A = R_{\widehat{S}}(\Omega \otimes I_n)$ be the system matrix of the linear assignment flow~\eqref{eq:LAF-2-dot-V}. Then the following holds.
    \begin{enumerate}[(a)]
        \item If the diagonal of $\Omega$ is nonnegative and contains at least
            one positive element, the matrix $A$ has at least one eigenvalue
            with positive real part.
            This means that all finite equilibria are unstable.
        \item If $\Omega$ has the form~\eqref{eq:sym-Omega} (i.e.\ $\Omega$ is a
            row-wise positive scaling of a symmetric matrix),
            then $A$ has only real eigenvalues.
            As a consequence, any initial value converges either to a finite
            equilibrium or to a fixed limit point at infinity.
        \item If $\Omega$ is invertible, then $\rank(A) = m(n-1)$.
            Furthermore, $\mc{N}(A)$ is spanned by the vectors
            $\{e_i \otimes \eins_n \colon i \in I\}$ and the restriction
            $A|_{\mc{T}_{0}}$ is invertible.
            Thus, 0 is the only finite equilibrium.
        \item If $\Omega$ is invertible and positive definite, then the $m(n-1)$ nonzero
            eigenvalues of $A$ are positive as well.
            Consequently, the restriction $A|_{\mc{T}_{0}}$ is positive definite
            and any initial value, except for the origin, diverges to infinity.
    \end{enumerate}
\end{proposition}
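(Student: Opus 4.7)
The plan is to exploit the block structure $A_{ik} = \omega_{ik} R_{\widehat{S}_i}$ coming from the Kronecker identity $(\Omega\otimes I_n)(e_k\otimes v) = (\Omega e_k)\otimes v$. For part (a), I would compute the trace directly,
\begin{equation*}
\operatorname{tr}(A) \;=\; \sum_{i\in I}\omega_{ii}\,\operatorname{tr}(R_{\widehat{S}_i}) \;=\; \sum_{i\in I}\omega_{ii}\bigl(1-\|\widehat{S}_i\|^2\bigr),
\end{equation*}
using that every $\widehat{S}_i\in\mc{S}$ satisfies $\sum_j \widehat{S}_{ij}=1$ and $\|\widehat{S}_i\|^2<1$ strictly. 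Under the hypothesis on the diagonal of $\Omega$, the trace is strictly positive, so the sum of real parts of the eigenvalues of $A$ (counted with multiplicity) is positive, at least one eigenvalue sits in the open right half-plane, and instability follows from Proposition~\ref{prop:LAF-A-1}(a).

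For (b), I would factor $\Omega = \Diag(w)^{-1}\widehat{\Omega}$ and write $A = MN$ with $M := R_{\widehat{S}}\,(\Diag(w)^{-1}\otimes I_n)$ and $N := \widehat{\Omega}\otimes I_n$. Each block $R_{\widehat{S}_i} = \Diag(\widehat{S}_i) - \widehat{S}_i\widehat{S}_i^\top$ is symmetric positive semidefinite (Jensen's inequality realises $v^\top R_{\widehat{S}_i} v$ as the variance of $v$ under $\widehat{S}_i$), so $M$ is block-diagonal with symmetric PSD blocks $w_i^{-1}R_{\widehat{S}_i}$, and $N$ is symmetric by the assumption on $\widehat{\Omega}$. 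The standard identity $\sigma(PQ)\setminus\{0\} = \sigma(QP)\setminus\{0\}$ applied to $P = M^{1/2}$, $Q = M^{1/2} N$ then shows that the nonzero eigenvalues of $A = MN$ coincide with those of the symmetric matrix $M^{1/2}NM^{1/2}$ and are therefore real; the asymptotic statement then follows from Proposition~\ref{prop:LAF-A-1}(b),(d).

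For (c), one direction is immediate from the block identity: $(\Omega\otimes I_n)(e_i\otimes\eins_n) = (\Omega e_i)\otimes\eins_n$ has every $n$-block a multiple of $\eins_n\in\mc{N}(R_{\widehat{S}_k})$, so $\Span\{e_i\otimes\eins_n : i\in I\}\subseteq\mc{N}(A)$. Conversely, expanding $v = \sum_i e_i\otimes v_i$, one sees that $Av = 0$ forces $(\Omega\otimes I_n)v\in\mc{N}(R_{\widehat{S}}) = \Span\{e_k\otimes\eins_n\}$, which unfolds into the system $\sum_i \omega_{ki} v_i = c_k\eins_n$ for every $k$. Invertibility of $\Omega$ then yields $v_i\in\Span\{\eins_n\}$ for each $i$, giving the claimed nullspace and $\rank(A) = m(n-1)$. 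Because $\rge(R_{\widehat{S}})\subseteq\mc{T}_0$, $A$ takes values in $\mc{T}_0$, and the direct sum $\R^{mn} = \mc{T}_0\oplus\mc{N}(A)$ (which is immediate from dimensions and $\mc{T}_0\cap\Span\{e_i\otimes\eins_n\}=\{0\}$) shows that $A|_{\mc{T}_0}$ is an isomorphism, so $0$ is the only finite equilibrium.

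Positive definiteness of $\Omega$ in (d) entails symmetry, so (b) and (c) apply: $A$ has real eigenvalues, $m$ of them vanish, and $A|_{\mc{T}_0}$ is invertible. To sign the remaining $m(n-1)$ eigenvalues, I would reuse $A = MN$ with $M = R_{\widehat{S}}$ and $N = \Omega\otimes I_n$; the latter is symmetric positive definite since $\sigma(\Omega\otimes I_n) = \sigma(\Omega)$ with each eigenvalue repeated $n$ times. Then $M^{1/2}NM^{1/2}$ is symmetric and satisfies $x^\top M^{1/2}NM^{1/2}x = (M^{1/2}x)^\top N(M^{1/2}x) > 0$ whenever $M^{1/2}x\neq 0$, so all its nonzero eigenvalues are strictly positive; by the similarity argument from (b) these coincide with the nonzero eigenvalues of $A$. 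Since $\rge(M^{1/2}) = \mc{N}(M)^\perp = \mc{T}_0$, this gives positive definiteness of $A|_{\mc{T}_0}$, and divergence of every nonzero initial value in $\mc{T}_0$ is Proposition~\ref{prop:LAF-A-1}(d). The main hurdle throughout is that $A$ itself is not symmetric; the plan bypasses this by routing everything through the symmetric sandwich $M^{1/2}NM^{1/2}$ and carefully tracking the zero eigenvalues, which live in $\Span\{e_i\otimes\eins_n\}$ rather than in $\mc{T}_0$.
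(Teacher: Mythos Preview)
Your proposal is correct and follows essentially the paper's approach: the trace argument for (a), the $\sigma(PQ)=\sigma(QP)$ symmetric-sandwich trick for (b), the nullspace/rank computation for (c), and the sandwich argument for (d) all match. The only cosmetic difference is that in (d) the paper sandwiches with the \emph{invertible} factor $(\Omega\otimes I_n)^{1/2}$ and then reads off the signature of $A$ from that of $R_{\widehat S}$ via Sylvester's law of inertia, whereas you sandwich with the singular factor $R_{\widehat S}^{1/2}$ and argue positivity of the nonzero eigenvalues directly from the quadratic form---both are variants of the same idea.
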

\begin{proof}
    \begin{enumerate}[(a)]
        \item Because the trace of $A$ is positive---cf.~\eqref{eq:ProofAtLeastOnePositiveEigenvalue}---$A$
            must have at least one positive eigenvalue.
            The statement on the stability of the equilibria follows from Proposition~\ref{prop:LAF-A-1}(a).
        \item Using the notation $A \sim B$ for the similarity
            of the matrices $A$ and $B$, we have
            \begin{subequations}
                \begin{align}
                    A &= R_{\widehat{S}} (\Omega \otimes I_n)
                    \overset{\eqref{eq:sym-Omega}}{=} R_{\widehat{S}} (\Diag(w)^{-1}\widehat{\Omega} \otimes I_n) \\
                    &= R_{\widehat{S}} (\Diag(w)^{-1} \otimes I_n) (\widehat{\Omega} \otimes I_n) \\
                    &= R_{\widehat{S}} (\Diag(w) \otimes I_n)^{-1} (\widehat{\Omega} \otimes I_n) \\
                    \begin{split}
                    &\sim (\Diag(w) \otimes I_n)^{\frac{1}{2}} R_{\widehat{S}} (\Diag(w) \otimes I_n)^{-\frac{1}{2}} \\
                    &\qquad \cdot (\Diag(w) \otimes I_n)^{-\frac{1}{2}} (\widehat{\Omega} \otimes I_n) (\Diag(w) \otimes I_n)^{-\frac{1}{2}} 
                    \end{split} \\
                    &= R_{\widehat{S}} (\Diag(w) \otimes I_n)^{-\frac{1}{2}} (\widehat{\Omega} \otimes I_n) (\Diag(w) \otimes I_n)^{-\frac{1}{2}} \\
                    &\sim R_{\widehat{S}}^{\frac{1}{2}} (\Diag(w) \otimes I_n)^{-\frac{1}{2}} (\widehat{\Omega} \otimes I_n) (\Diag(w) \otimes I_n)^{-\frac{1}{2}} R_{\widehat{S}}^{\frac{1}{2}},
                \end{align}
            \end{subequations}
            where
            $R_{\hat{S}}^{\frac{1}{2}}$ denotes the symmetric positive semidefinite
            square root of $R_{\hat{S}}$.
            The last matrix is symmetric and therefore all of the matrices above only have
            real eigenvalues.
            By Proposition~\ref{prop:LAF-A-1}(b), the system converges either to a finite
            equilibrium or towards a fixed point at infinity.
        \item We have $\rank(A) = \rank(R_{\widehat{S}}(\Omega \otimes I_n)) = \rank(R_{\widehat{S}}) = m(n-1)$,
            which yields the first statement.
            The second statement follows from
            \begin{equation}
            	{R_{\widehat{S}}(\Omega \otimes I_n)(e_i \otimes \eins_n)} = {R_{\widehat{S}}(\Omega e_i \otimes I_n \eins_n)} = {R_{\widehat{S}}(\Omega e_i \otimes \eins_n)} = 0, 
	   \end{equation}
	   since ${R_{\widehat{S}_{i}}\eins_{n}=0}$, ${\forall i \in I}$.
            With Proposition~\ref{prop:LAF-A-1}(c) we conclude that 0 is the only finite equilibrium.
        \item $R_{\widehat{S}}$ is positive semidefinite and we have
        \begin{equation}
            \sigma(R_{\widehat{S}}(\Omega \otimes I_n)) = {\sigma((\Omega \otimes I_n)^\frac{1}{2}R_{\widehat{S}}(\Omega \otimes I_n)^\frac{1}{2})}.
        \end{equation}
            Hence, by Sylvester's law, the matrices
            $(\Omega \otimes I_n)^\frac{1}{2}R_{\widehat{S}}(\Omega \otimes I_n)^\frac{1}{2}$
            and $R_{\widehat{S}}$ have the same inertia.
            Thus, the center-stable manifold contains only the origin.
            Proposition~\ref{prop:LAF-A-1}(d) yields divergence to infinity.
    \end{enumerate} \qed
\end{proof}

\begin{remark}
    If $\Omega$ is not a row-wise positive scaling of a
    symmetric matrix, the resulting matrix $A$ may have complex eigenvalues.
    This can be seen for the choice
    \begin{alignat}{1}
        \hat{S} = \frac{1}{2}\begin{pmatrix}
            1 & 1 \\ 1 & 1
        \end{pmatrix},\quad
        \Omega = \frac{1}{2}\begin{pmatrix}
            1 & 1 \\ -1 & 1
        \end{pmatrix},
    \end{alignat}
    for which the matrix $A$ has the eigenvalues $\sigma(A) = \{\frac{1}{2}+\frac{1}{2} i, \frac{1}{2}-\frac{1}{2} i, 0, 0 \}$.
    Note that $\Omega$ is a row-wise scaling of a symmetric matrix but not a
    row-wise \textit{positive} scaling.

    The same matrix $\hat{S}$ and the matrix
    \begin{alignat}{1}
        \Omega = \frac{1}{2}\begin{pmatrix}
            -1 & 1 \\ 1 & -1
        \end{pmatrix}
    \end{alignat}
    yield only nonpositive eigenvalues $\sigma(A) = \{-\frac{1}{2}, 0, 0, 0 \}$.

    For uniform positive weights~\eqref{eq:uniform-weights}, $\Omega$ has nonpositive eigenvalues.
    The existence of the eigenvalue $0$ depends on the size of the graph and the
    size of the neighborhood.
    If $\Omega$ is a randomly chosen or a matrix of the
    form~\eqref{eq:sym-Omega} and estimated from data, it generally has negative eigenvalues.
\end{remark}

To analyze the asymptotic behavior of the lifted flow
\begin{equation}\label{eq:LAF-lifted}
W(t) = \Exp_{W_{0}}\big(V(t)\big)
= \exp_{W_{0}}\Big(\frac{V(t)}{W_{0}}\Big),
\end{equation}
it is enough to lift the line in direction of the maximal eigenvector to the
assignment manifold, as examined next.
\begin{lemma}
    Let $v$ be a vector which has its maximal entries at the positions
    $\{i_1, \dots i_k\} = \argmax_i v_i$.
    Then the line in direction $v$ lifted at $p \in \mc{S}$ converges to a
    specific point on a $k$-dimensional face of $\mc{S}$ given by
    \begin{alignat}{1}
        \lim_{t \to \infty} \exp_p(t v) = \frac{1}{\sum_{l \in [k]} p_{i_l}} \sum_{l \in [k]} p_{i_l} e_{i_l}.
    \end{alignat}
    In particular, if $v$ has a unique maximal entry, $\lim_{t \to \infty} \exp_p(t v)$
    converges to the corresponding unit vector.
\end{lemma}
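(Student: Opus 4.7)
The plan is to unfold the explicit componentwise formula~\eqref{eq:def-exp} for $\exp_p$ and evaluate the limit directly. By definition,
\begin{equation}
[\exp_p(tv)]_j = \frac{p_j e^{t v_j}}{\sum_{m=1}^n p_m e^{t v_m}}, \qquad j \in [n].
\end{equation}
Setting $v^{\ast} := \max_{i} v_i$ and multiplying numerator and denominator by $e^{-t v^{\ast}}$ gives the equivalent representation
\begin{equation}
[\exp_p(tv)]_j = \frac{p_j\, e^{t (v_j - v^{\ast})}}{\sum_{m=1}^n p_m\, e^{t (v_m - v^{\ast})}},
\end{equation}
in which every exponent is nonpositive. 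This reparametrization is the key move, since it avoids dealing with expressions that individually diverge.

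Now I would observe that $v_j - v^{\ast} \leq 0$ for every $j$, with equality precisely for $j \in \{i_1,\dots,i_k\}$. Consequently, as $t \to \infty$, each term $e^{t(v_j - v^{\ast})}$ tends to $1$ if $j$ is a maximizer and to $0$ otherwise. Since $p \in \mathcal{S}$ has strictly positive components, the denominator converges to the strictly positive number $\sum_{l \in [k]} p_{i_l}$, so the quotient converges componentwise to
\begin{equation}
\lim_{t \to \infty} [\exp_p(tv)]_j = \begin{cases} \dfrac{p_j}{\sum_{l \in [k]} p_{i_l}}, & j \in \{i_1,\dots,i_k\}, \\[1ex] 0, & \text{otherwise}. \end{cases}
\end{equation}
Assembling these entries into a vector yields exactly the claimed formula $\lim_{t \to \infty} \exp_p(tv) = \frac{1}{\sum_{l \in [k]} p_{i_l}} \sum_{l \in [k]} p_{i_l} e_{i_l}$, and specializing to $k=1$ gives the claim about unique maximizers.

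There is no real obstacle here: the argument is a direct computation once one normalizes by $e^{-tv^{\ast}}$. The only point that needs a one-line justification is that the denominator stays bounded away from zero in the limit, which is immediate from $p \in \mathcal{S}$ (hence $p_{i_l} > 0$ for each $l$). No appeal to the earlier theory of the $S$-flow or the assignment flow is required.
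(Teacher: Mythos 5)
Your proof is correct and follows essentially the same route as the paper: the paper's proof also normalizes by $v_{\max}=\max_i v_i$ (via the shift-invariance $\exp_p(tv)=\exp_p(t(v-v_{\max}\eins_n))$, which amounts to your multiplication by $e^{-tv^{\ast}}$) and then takes the componentwise limit. Your added remark that the denominator is bounded away from zero because $p\in\mc{S}$ is a welcome, if minor, bit of extra care.
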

\begin{proof}
    Set $v_{\max} = \max_i v_i$ and consider
    $\exp_p(t v) = \exp_p(t (v - v_{\max} \eins_{n})) = \frac{p e^{t(v - v_{\max} \eins_{n})}}{\langle p, e^{t(v - v_{\max} \eins_{n})} \rangle}$.
    In the numerator, every entry which does not correspond to a maximal entry of
    $v$ converges to $0$ for $t \to \infty$, whereas the other entries converge
    to the corresponding entry in $p$.
    The denominator normalizes the expression, which yields the result. \qed
\end{proof}

Applying this lemma to each vertex in $I$, we get the following
statement on the convergence of the lifted linear assignment flow to integral
assignments.
\begin{corollary}\label{cor:integral-assignment-LAF}
    Under the assumptions of
    Lemma~\ref{lmm:LAF-trajectory},
    if $\frac{v_1}{W_0}$ has a unique maximal entry for
    each vertex, then the lifted flow~\eqref{eq:LAF-lifted} converges to an
    integral assignment.
\end{corollary}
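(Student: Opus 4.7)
The plan is to combine Lemma~\ref{lmm:LAF-trajectory}, which controls the asymptotics of $V(t)$, with the preceding lifted-line lemma, which handles $\exp_p(tv)$ when $v$ has a unique maximum. Concretely, I would first invoke Lemma~\ref{lmm:LAF-trajectory} to write
\begin{equation}
V(t) = c_1 e^{\lambda_1 t} v_1 + \sum_{k \geq 2} c_k e^{\lambda_k t} v_k,
\end{equation}
where $\Re(\lambda_k) < \lambda_1$ for all $k \geq 2$ by the unique dominance of $\lambda_1$. Dividing componentwise by the strictly positive matrix $W_0$ preserves these asymptotics entrywise, so $V(t)/W_0 = c_1 e^{\lambda_1 t}\, (v_1/W_0) + r(t)$ with $r(t)$ entrywise $O(e^{\Re(\lambda_2) t})$.

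Next, I would argue vertex by vertex. Fix $i \in I$ and let $j^*(i)$ be the unique maximizer of the subvector $(v_1)_i/W_{0i}$, which exists by hypothesis. Set $u_i(t) := V_i(t)/W_{0i}$. By~\eqref{eq:def-exp}, the $j$-th component of $W_i(t) = \exp_{W_{0i}}(u_i(t))$ equals
\begin{equation}
W_{ij}(t) = \frac{W_{0ij}\, e^{u_{ij}(t) - u_{ij^*(i)}(t)}}{\sum_{l} W_{0il}\, e^{u_{il}(t) - u_{ij^*(i)}(t)}},
\end{equation}
since the softmax is invariant under adding $u_{ij^*(i)}(t)\eins_n$ to the argument. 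For $j \neq j^*(i)$ the shifted exponent is
\begin{equation}
u_{ij}(t) - u_{ij^*(i)}(t) = c_1 e^{\lambda_1 t}\Bigl(\tfrac{(v_1)_{ij}}{W_{0ij}} - \tfrac{(v_1)_{ij^*(i)}}{W_{0ij^*(i)}}\Bigr) + O\bigl(e^{\Re(\lambda_2) t}\bigr),
\end{equation}
and the coefficient multiplying $c_1 e^{\lambda_1 t}$ is strictly negative by the uniqueness of the maximum. Provided $c_1 e^{\lambda_1 t} \to +\infty$ (the case of interest, in which Lemma~\ref{lmm:LAF-trajectory} gives a genuine direction of growth), this dominant negative term overwhelms the $O(e^{\Re(\lambda_2) t})$ remainder because $\lambda_1 > \Re(\lambda_2)$; hence $u_{ij}(t) - u_{ij^*(i)}(t) \to -\infty$, which forces every non-maximal component of $W_i(t)$ to vanish. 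Consequently $W_i(t) \to e_{j^*(i)}$, and since $i$ was arbitrary the lifted flow converges to the integral assignment whose $i$-th row is $e_{j^*(i)}$.

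The main obstacle is the bookkeeping of signs: the statement is phrased in terms of the unique maximum of $v_1/W_0$, but this is only the right hypothesis when $c_1 e^{\lambda_1 t} \to +\infty$. If $c_1 < 0$ (still with $\lambda_1 > 0$), the roles of $\argmax$ and $\argmin$ swap and one must apply the hypothesis to $-v_1/W_0$; I would note this as an implicit sign convention. A related point is that the corollary is only informative when $\lambda_1 > 0$, so that the lifted line actually reaches a vertex of $\ol{\mc{W}}$; this is automatic, for instance, under the hypothesis of Proposition~\ref{prop:LAF-A-2}(a). Both issues are cosmetic rather than substantive, and the core argument is the softmax-domination estimate above.
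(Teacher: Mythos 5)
Your argument is correct and follows essentially the same route as the paper, which simply applies the preceding lifted-line lemma ($\lim_{t\to\infty}\exp_p(tv)$ concentrating on $\argmax_j v_j/p_j$ after division by $p$) to the dominant-mode asymptotics of $V(t)$ from Lemma~\ref{lmm:LAF-trajectory}, vertex by vertex. Your version is slightly more careful in that it explicitly controls the subdominant remainder $r(t)=O(e^{\Re(\lambda_2)t})$ (the paper's lemma is stated for an exact ray $tv$) and flags the sign convention when $c_1<0$ and the need for $\lambda_1>0$; these are reasonable clarifications, not a different proof.
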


Because $W_0$ and the dominant eigenvector of $A$ depend on real data in practice, the assumptions of Corollary~\ref{cor:integral-assignment-LAF} are typically satisfied.

We conclude this section by comparing the convergence properties of the $S$-flow
to those of the linear assignment flow.
\begin{remark}[$S$-flow vs.~linear assignment flow]
    If $\Omega$ is nonnegative on the diagonal with at least one positive entry,
    the Jacobian matrices of the $S$-flow (at nonintegral points) and the
    Jacobian matrix of the linear assignment flow, i.e.\ $A$, have at least
    one eigenvalue with positive real part (see Proposition~\ref{prop:Jacobian-eigen}(c) and Proposition~\ref{prop:LAF-A-2}(a)).
    Thus, for both flows and such an $\Omega$, the nonintegral equilibria are
    unstable (Corollary~\ref{cor:stability-S}(c) and Proposition~\ref{prop:LAF-A-2}(a)).

    Theorem~\ref{thm:convergence} and Proposition~\ref{prop:LAF-A-2}(b) state
    that for both flows a sufficient condition for convergence is that $\Omega$
    has the form~\eqref{eq:sym-Omega}.
    Let $\Omega$ have both properties, i.e.\ nonnegative on the diagonal with at least
    one positive entry and row-wise positive scaling of a symmetric matrix.
    Then, the set of initial values converging to a nonintegral point is negligible (Proposition~\ref{prop:null-set}, Theorem~\ref{thm:measure-0} and Proposition~\ref{prop:LAF-A-1}(a)).

    For a given initial value, the two flows generally converge to different limit
    points and their regions of attraction generally look different.
    However, for small finite time-points, the linear assignment flow approximates
    the assignment flow and (after the appropriate transformation) the $S$-flow
    very well~\cite{Zeilmann:2018aa}.
\end{remark}


\section{Discretization, Numerical Examples and Discussion}
\label{sec:numerics}

\subsection{Discretization, Geometric Integration}

We confine ourselves to the simplest geometric scheme worked out by~\cite{Zeilmann:2018aa} for numerically integrating the assignment flow~\eqref{eq:AF}. Applying this scheme to the $S$-flow~\eqref{eq:def-S-flow-F} that has the same structure as~\eqref{eq:AF}, yields the iteration
\begin{equation}\label{eq:Euler-scheme}
S^{(t+1)} = F_h( S^{(t)} ),\qquad
F_h(S) = \exp_{S}(h \Omega S),\qquad h > 0,\; t \in \N_{0},
\end{equation}
where $h$ denotes a fixed step size and the iteration step $t$ represents the points of time $t h$.

The following proposition shows that using this numerical method is `safe' in the sense that, by setting $h$ to a sufficiently small value, the approximation of the continuous-time solution $S(t)$ by the sequence $\big(S(t h)\big)_{t \geq 0}$ generated by
\eqref{eq:Euler-scheme} can become arbitrarily accurate.

\begin{proposition}\label{prop:approx-discr-euler}
Let $L > 0$ be the Lipschitz constant of the mapping $F$~\eqref{eq:def-S-flow-F} defining the $S$-flow.
Then there exists a constant $C > 0$ such that the solution $S(t)$ to the $S$-flow~\eqref{eq:def-S-flow-F} and the sequence $\big(S(t h)\big)_{t \geq 0}$ generated by
\eqref{eq:Euler-scheme} satisfy the relation
\begin{equation} \label{eq:approx-discr-euler}
\big\| S(t h) - S^{(t)} \big\| \leq \frac{C}{2 L} h e^{(t+1) L h},\qquad \forall t \in \N.
\end{equation}
\end{proposition}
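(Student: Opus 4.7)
The plan is to follow the classical consistency-plus-stability framework for one-step numerical schemes, specialised to the geometric Euler step $F_h(S) = \exp_S(h\Omega S)$ defined by~\eqref{eq:Euler-scheme}. Setting $e_t := \|S(th)-S^{(t)}\|$, one writes
\begin{equation}
e_{t+1} \leq \bigl\|S((t+1)h)-F_h(S(th))\bigr\| + \bigl\|F_h(S(th))-F_h(S^{(t)})\bigr\|,
\end{equation}
and treats the two summands separately as local truncation error and stability term. Discrete Gronwall then closes the argument.

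For consistency, I would show a uniform bound of the form $\|S(h;S_0)-F_h(S_0)\|\leq \tfrac{C}{2}h^2$ for every $S_0\in\ol{\mc{W}}$, where $S(h;S_0)$ is the value of the $S$-flow at time $h$ with initial value $S_0$. The exact solution Taylor-expands as $S(h;S_0)=S_0+hF(S_0)+O(h^2)$, using smoothness of $F=R_{(\cdot)}(\Omega\,\cdot)$. On the numerical side, formula~\eqref{eq:def-exp} gives $\exp_{S_0}(0)=S_0$, and~\eqref{eq:dexp} together with $R_{S_0}(\Omega S_0)=F(S_0)$ yields $\partial_h F_h(S_0)|_{h=0}=F(S_0)$, so $F_h(S_0)=S_0+hF(S_0)+O(h^2)$ as well. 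The first-order parts cancel, and the second-order remainder admits a constant $C$ independent of $S_0$ by compactness of $\ol{\mc{W}}$ and smoothness of $\exp_p(v)$ jointly in $(p,v)$.

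For stability, the task is to show that $F_h$ is Lipschitz with a constant of the form $1+Lh$ (equivalently $\leq e^{Lh}$). Writing
\begin{equation}
F_h(S)-F_h(S') = \int_0^1 dF_h\bigl(S'+s(S-S')\bigr)[S-S']\,ds,
\end{equation}
it suffices to bound $\|dF_h(\cdot)\|\leq 1+Lh+O(h^2)$ uniformly on $\ol{\mc{W}}$. This follows by differentiating $F_h(S)=\exp_S(h\Omega S)$ with respect to the base point $S$ and the tangent argument simultaneously: at $h=0$ one has $dF_0=I$, and the first-order correction is exactly $h\,dF$, whose operator norm is at most $hL$. This is the step I expect to be the main technical obstacle, because one must control the derivative of $\exp_p(v)$ with respect to $p$ (not covered by~\eqref{eq:dexp}); however, the explicit closed form~\eqref{eq:def-exp} makes the estimate routine and uniform on the compact set $\ol{\mc{W}}$.

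Combining the two estimates yields the recursion $e_{t+1}\leq (1+Lh)e_t+\tfrac{C}{2}h^2$ with $e_0=0$. Iterating gives
\begin{equation}
e_t \leq \tfrac{C}{2}h^2\sum_{k=0}^{t-1}(1+Lh)^k \leq \tfrac{C}{2}h^2\cdot\frac{(1+Lh)^t-1}{Lh} \leq \tfrac{C}{2L}\,h\,e^{tLh}\leq \tfrac{C}{2L}\,h\,e^{(t+1)Lh},
\end{equation}
using $(1+Lh)^k\leq e^{kLh}$ and $e^{Lh}\geq 1$ to adjust the exponent into the stated form~\eqref{eq:approx-discr-euler}. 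Apart from the stability bound, the remaining work is entirely bookkeeping.
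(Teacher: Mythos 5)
Your overall strategy (consistency $+$ stability of the discrete map $+$ discrete Gronwall) is a legitimate alternative to the paper's argument, which instead compares the exact trajectory on each interval $[th,(t+1)h]$ with the curve $Y^{(t)}(\tau)=\exp_{S^{(t)}}(\tau\Omega S^{(t)})$, splits the defect of the vector fields into $F(S)-F(Y)$ and $F(Y)-G(Y)$ with $G(Y)=R_{Y}(\Omega S^{(t)})$, and applies the \emph{continuous} Gronwall inequality; that route produces the one-step amplification factor $e^{Lh}$ exactly, with $L$ the Lipschitz constant of the continuous field $F$. Your consistency estimate $\|S(h;S_0)-F_h(S_0)\|\leq\tfrac{C}{2}h^2$, uniform over $\ol{\mc{W}}$ by compactness and invariance, is fine.

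The gap is in the stability step. You claim it suffices to show $\|dF_h\|\leq 1+Lh+O(h^2)$, but it does not. Since $F_h(S)=\exp_S(h\Omega S)$ is not the explicit Euler map $S\mapsto S+hF(S)$, the $O(h^2)$ discrepancy also enters the differential of $F_h$, and there is no reason for its coefficient to be dominated by $L^2/2$; hence $\|dF_h\|\leq e^{Lh}$ is not a consequence of your expansion. With a contraction factor $1+Lh+ch^2$ the recursion only yields $e_t\leq\tfrac{C}{2L}h\,e^{t(L+ch)h}$, and because the proposition is asserted for \emph{all} $t\in\N$ with $L$ pinned down as the Lipschitz constant of $F$, the extra factor $e^{ch\cdot th}$ grows unboundedly in $t$ and cannot be absorbed into the constant $C$; the stated inequality~\eqref{eq:approx-discr-euler} therefore does not follow as written. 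The repair is easy and stays within your framework: insert the explicit Euler map $E_h(S)=S+hF(S)$ as an intermediate point and bound $e_{t+1}$ by $\|S((t+1)h)-E_h(S(th))\|+\|E_h(S(th))-E_h(S^{(t)})\|+\|E_h(S^{(t)})-F_h(S^{(t)})\|$. The middle term is $\leq(1+Lh)e_t$ exactly, while the first and third terms are both $O(h^2)$ uniformly on $\ol{\mc{W}}$, so the deviation of the geometric step from Euler is charged to the truncation error (where it only enlarges $C$) rather than to the stability constant (where it would corrupt the exponent). With that modification your computation closes and even gives $e^{tLh}$ in place of $e^{(t+1)Lh}$.
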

\begin{proof}
See Appendix~\ref{sec:Proofs}.
\end{proof}

Proposition~\ref{prop:attraction} asserts the existence of regions of attraction for stable equilibria $S^{\ast}\in\ol{\mc{W}}$ of the continuous-time $S$-flow~\eqref{eq:def-S-flow-F}. The following proposition extends this assertion to the discrete-time $S$-flow~\eqref{eq:Euler-scheme}.
\begin{proposition}\label{prop:region-attraction-Euler}
Let $\Omega$, $S^* \in \Wstar$, $A(S^*)$ and $B_{\varepsilon}(S^*)$ be as in Proposition~\ref{prop:attraction}.
Then, for the sequence $(S^{(t)})_{t \in \N}$ generated by~\eqref{eq:Euler-scheme}, the following holds.
If $S^{(t_0)} \in B_{\varepsilon}(S^*)$ for some time point $t_0 \in \N$, then $S^{(t)} \in B_{\varepsilon}(S^*)$ for all $t \geq t_0$ and $\lim_{t \rightarrow \infty} S^{(t)} = S^*$.
Moreover, we have
\begin{equation}\label{eq:rate-Euler}
\big\| S_i^{(t)} - S_i^* \big\|_{1} \leq \big\| S_i^{(t_0)} - S_i^* \big\|_{1} \cdot \gamma_i^{t - t_0}
\end{equation}
with $\gamma_i \in (0,1)$, for each $i \in I$.
\end{proposition}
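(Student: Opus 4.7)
The plan is to mirror the proof of Proposition~\ref{prop:attraction}, but replace the Grönwall-type differential estimate by a one-step multiplicative contraction extracted directly from the explicit form of the geometric Euler update $F_h$. The crucial observation is that for any integral $S^{\ast}\in\Wstar$ with $S_i^{\ast}=e_{j^{\ast}(i)}$, the $\ell_1$-distance reduces to $\|S_i-S_i^{\ast}\|_1 = 2(1-S_{i,j^{\ast}(i)})$, so it suffices to show that the single quantity $1-S_{i,j^{\ast}(i)}$ is contracted at each step.

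Using~\eqref{eq:def-exp} and dividing numerator and denominator by $e^{h(\Omega S)_{i,j^{\ast}(i)}}$, one obtains
\begin{equation*}
1-(F_h(S))_{i,j^{\ast}(i)} \;=\; \frac{\sum_{j\neq j^{\ast}(i)} S_{ij}\, e^{-h\Delta_{ij}(S)}}{S_{i,j^{\ast}(i)} + \sum_{j\neq j^{\ast}(i)} S_{ij}\, e^{-h\Delta_{ij}(S)}}, \qquad \Delta_{ij}(S):=(\Omega S)_{i,j^{\ast}(i)}-(\Omega S)_{ij}.
\end{equation*}
The hypothesis $S\in B_{\varepsilon}(S^{\ast})\subset A(S^{\ast})$ is precisely $\Delta_{ij}(S)>0$ for every $j\neq j^{\ast}(i)$. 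Setting $c_i(S):=\exp\!\bigl(-h\min_{j\neq j^{\ast}(i)}\Delta_{ij}(S)\bigr)\in(0,1)$ and $x=1-S_{i,j^{\ast}(i)}$, then bounding each exponential by $c_i(S)$ in the numerator and using that $t\mapsto t/(t+a)$ is increasing on $(0,\infty)$ for $a=S_{i,j^{\ast}(i)}$ fixed, yields
\begin{equation*}
1-(F_h(S))_{i,j^{\ast}(i)} \;\leq\; \frac{c_i(S)\, x}{1-(1-c_i(S))\, x}.
\end{equation*}

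Next, choose $\delta\in(0,\varepsilon)$ such that $S^{(t_0)}\in\overline{B_{\delta}(S^{\ast})}$, which is possible because $B_{\varepsilon}(S^{\ast})$ is relatively open in $\ol{\mc{W}}$. The set $\overline{B_{\delta}(S^{\ast})}\cap\ol{\mc{W}}$ is compact and contained in the open polytope $A(S^{\ast})$, so the continuous function $S\mapsto \min_{j\neq j^{\ast}(i)}\Delta_{ij}(S)$ attains a strictly positive minimum there. This gives a uniform constant $c_i\in(0,1)$ and hence
\begin{equation*}
\gamma_i \;:=\; \frac{c_i}{1-(1-c_i)\,\delta/2} \;\in\;(0,1),
\end{equation*}
where $\gamma_i<1$ uses $\delta<\varepsilon\leq 2$. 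The induction then closes at once: if $S^{(t)}\in\overline{B_{\delta}(S^{\ast})}$, the per-step bound above gives $\|S_i^{(t+1)}-S_i^{\ast}\|_1\leq \gamma_i\|S_i^{(t)}-S_i^{\ast}\|_1\leq \gamma_i\delta<\delta$, so $S^{(t+1)}\in\overline{B_{\delta}(S^{\ast})}\subset B_{\varepsilon}(S^{\ast})$, and iteration yields both~\eqref{eq:rate-Euler} and $\lim_{t\to\infty}S^{(t)}=S^{\ast}$.

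The main obstacle I expect is the uniformization step: unlike the continuous-time argument of Proposition~\ref{prop:attraction}, where monotone decrease of $\|S_i-S_i^{\ast}\|_1$ follows from a pointwise Grönwall estimate, the discrete bound $c_i(S)x/\bigl(1-(1-c_i(S))x\bigr)$ couples the contraction rate $c_i(S)$ and the current deviation $x$. One must therefore argue simultaneously that $c_i$ stays bounded away from $1$ on a compact sub-ball \emph{and} that the iterates never leave this sub-ball. The argument closes thanks to the mild geometric fact $\varepsilon\leq 2$, which forces $\gamma_i<1$ for any admissible $\delta$; no additional assumptions on $\Omega$ beyond those of Proposition~\ref{prop:attraction} are needed.
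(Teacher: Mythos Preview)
Your proof is correct and follows essentially the same approach as the paper's: both compute $\|F_{h,i}(S)-S_i^{\ast}\|_1$ explicitly from the lifting map, bound the exponentials $e^{-h\Delta_{ij}(S)}$ by $e^{-h\beta_i(S)}$ to obtain the one-step contraction factor $\dfrac{e^{-h\beta_i(S)}}{S_{i,j^{\ast}(i)}+(1-S_{i,j^{\ast}(i)})e^{-h\beta_i(S)}}$, uniformize over a compact sub-ball $\overline{B_{\delta}(S^{\ast})}\subset B_{\varepsilon}(S^{\ast})$, and close by induction. The only cosmetic difference is in how $\gamma_i$ is extracted: the paper defines $\gamma_i$ as the maximum of the full contraction factor over $\overline{B_{\delta}(S^{\ast})}$, whereas you first bound $c_i(S)$ uniformly and then bound $x\leq\delta/2$ separately, arriving at $\gamma_i=c_i/\bigl(1-(1-c_i)\delta/2\bigr)$; both yield a constant in $(0,1)$ since $\delta<2$.
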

\begin{proof}
Let
\begin{equation}
\beta_i = \beta_i(S) \coloneqq \min \big\{ (\Omega S)_{i j^*(i)} - (\Omega S)_{ij} \big\}_{j \neq j^*(i)}.
\end{equation}
For $S \in A(S^*)$, we have $\beta_i(S) > 0$ and with $S_{i}^{\ast}=e_{ij^{\ast}(i)}$, $F_{h,i}(S)\in\Delta_{n}$,
\begin{subequations}
\begin{align}
\big\| F_{h,i}(S) - S_i^* \big\|_1
	&= 2 - 2 F_{h, i j^*(i)}(S) \\
	&= 2 - 2 \frac{ S_{i j^*(i)} }{ S_{i j^*(i)} + \sum_{j \neq j^*(i)} S_{ij} e^{h (\Omega S)_{ij} - h (\Omega S)_{i j^*(i)}} } \\
	&\leq 2 - 2 \frac{ S_{i j^*(i)} }{ S_{i j^*(i)} + (1 - S_{i j^*(i)} ) e^{-h \beta_i} } \\
	&= \| S_i - S_i^* \|_1 \underbrace{\frac{ e^{-h \beta_i} }{ S_{i j^*(i)} + (1 - S_{i j^*(i)}) e^{-h \beta_i} }}_{< 1}.
\end{align}
\end{subequations}
Choosing $\delta > 0$ with $S^{(t_0)} \in \overline{B_{\delta}(S^*)} \subset B_{\varepsilon}(S^*)$, we set
\begin{equation}
\gamma_i = \max_{S \in \ol{B_{\delta}(S^*)}}~\frac{ e^{-h \beta_i(S)} }{ S_{i j^*(i)} + (1 - S_{i j^*(i)}) e^{-h \beta_i(S)} } \in (0,1).
\end{equation}
and thus get $\| F_{h,i}(S) - S_i^* \|_1 \leq \gamma_i \| S_i - S_i^* \|_1$ for
$S \in B_{\delta}(S^*)$, which implies $F_h(\ol{B_{\delta}(S^*)}) \subseteq \ol{B_{\delta}(S^*)} \subset B_{\varepsilon}(S^*)$ and the exponential convergence rate~\eqref{eq:rate-Euler} of $S^{(t)}$. \qed
\end{proof}
\subsection{Numerical Examples, Discussion}

We illustrate in this section by a range of counter-examples that violating assumption~\eqref{eq:sym-Omega} can make the assignment flow behave quite differently from what the assertions of Section~\ref{sec:AF-Properties} predict.
In fact, we use violations of the assumptions as a guiding principle for constructing alternative asymptotic behavior (Section~\ref{sec:3x3counter-examples}).

In addition, we briefly discuss the influence of the parameter matrix $\Omega$ on the spatial shape of labelings returned by the assignment flow. Finally, we illustrate that our results on the region of attraction of the $S$-flow towards labelings turns the termination criterion proposed by~\cite{Astroem2017} into a mathematically sound one, provided a proper geometrical scheme is used for numerically integrating the assignment flow.

\subsubsection{Vanishing Diagonal Averaging Parameters}\label{sec:vanishing-diagonal}

We consider a small dynamical system that violates the basic assumption of Corollary~\ref{cor:stability-S}, that all diagonal entries of the parameter matrix $\Omega$ of the $S$-flow~\eqref{eq:def-S-flow-F} are positive. As a consequence, an entire line of nonintegral points $S^{\ast}$ is locally attracting the flow.
\begin{example}\label{ex:nonpos-diag}
Let $m=|I| = 3$ and $n=|J| = 2$, and let the parameters of the $S$-flow~\eqref{eq:def-S-flow-F} be given by the row-stochastic matrix
\begin{equation}\label{eq:ex-nonpos-diag-Omega}
\Omega = \{\omega_{ik}\}_{k \in \mc{N}_{i}, i \in I} = \frac 14 \begin{pmatrix} 0 & 2 & 2 \\ 1 & 2 & 1 \\ 1 & 1 & 2 \end{pmatrix}.
\end{equation}
One easily checks that any point $S^{\ast}$ on the line $\mc{L}$
\begin{equation}\label{eq:ex-nonpos-diag-L}
\mathcal{L} = \left\{ \begin{pmatrix} p & 1-p \\ 1 & 0 \\ 0 & 1 \end{pmatrix} \colon p \in [0,1] \right\} \subset \ol{\mathcal{W}}
\end{equation}
is an equilibrium of the $S$-flow satisfying $F(S^{\ast})=0$. In particular, this includes nonintegral points with $p \in (0,1)$. The eigenvalues of the Jacobian are given by
\begin{equation}
\sigma\big( \tfrac{\partial F}{\partial S}(S^*) \big) = \big\{ 0, -\tfrac 12, -\tfrac{p+2}{4}, -\tfrac p2, -\tfrac{1-p}{2}, -\tfrac{3-p}{4} \big\} \subset \R_{\leq 0}
\end{equation}
and are nonpositive. The phase portrait depicted by Figure~\ref{fig:example_nonpos_diag} illustrates that $\mc{L}$ locally attracts the flow.

This small example demonstrates that violation of the basic assumption---here,
specifically, $\w_{11}$ of~\eqref{eq:ex-nonpos-diag-Omega} is \textit{not} positive---leads to $S$-flows with properties not covered by the results of Section~\ref{sec:AF-Properties}.
Note that Theorem~\ref{thm:measure-0} is also based on this assumption and does not apply to the present example: there is an open set of starting points $S_{0} \in \mc{W}$ for which the $S$-flow converges to nonintegral equilibria $S^{\ast} \in \ol{\mathcal{W}}$.

Recalling Corollary~\ref{cor:integral-assignment-LAF}, we see that for
the linear assignment flow~\eqref{eq:LAF} continuous sets on the boundary of the assignment
manifold, like line $\mc{L}$ in Figure~\ref{fig:example_nonpos_diag}, cannot be limit points.
\end{example}
\begin{figure}
\begin{center}
\includegraphics[trim=2cm 0cm 0.5cm 0.75cm, clip, width=0.35\textwidth]{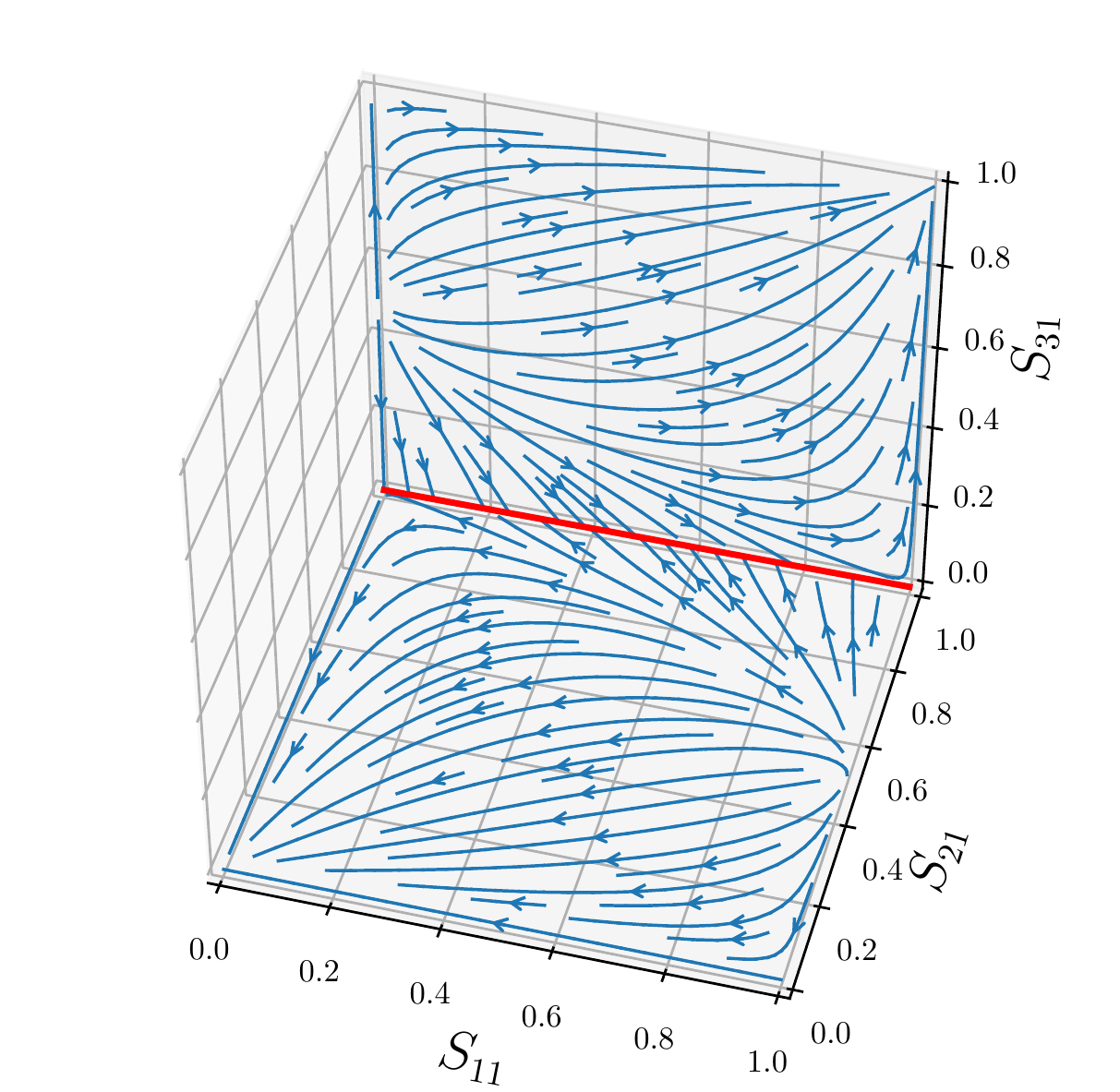} \qquad
\includegraphics[width=0.35\textwidth]{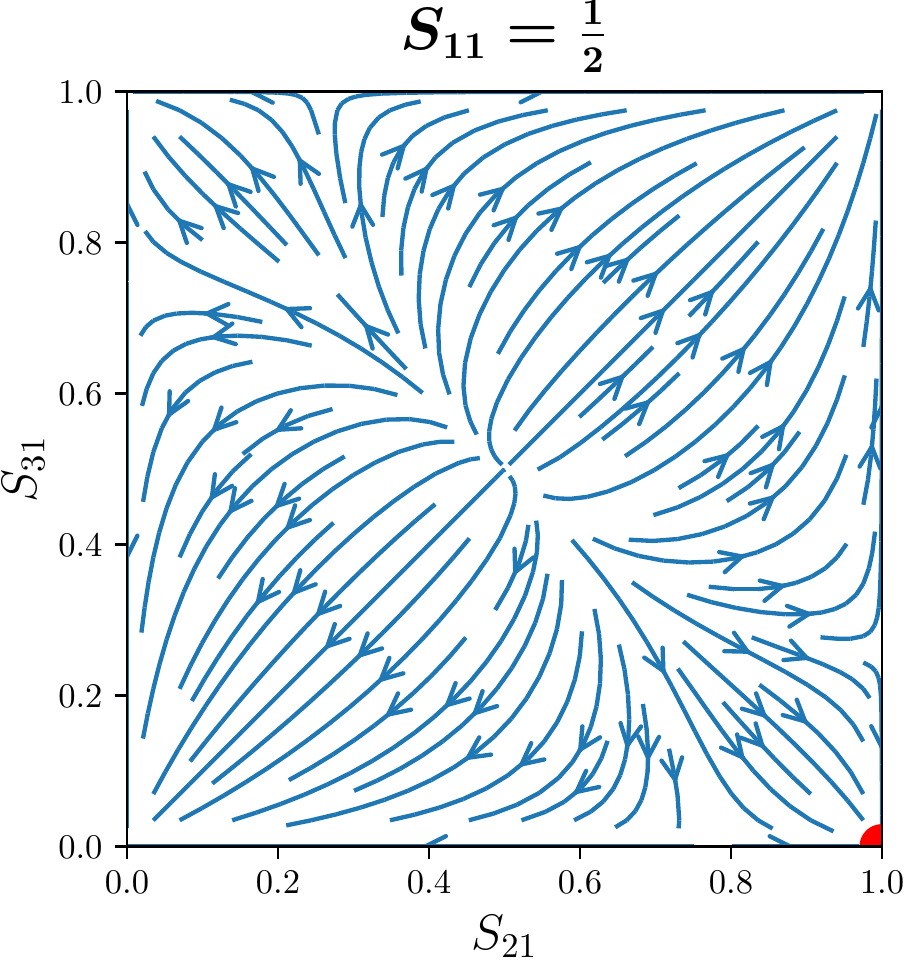}
\end{center}
\caption[Phase portrait for the flow of Example~\ref{ex:nonpos-diag}.]{%
\textbf{Phase portrait for the flow of Example~\ref{ex:nonpos-diag}.} We graphically depict the $S$-flow with $\Omega$ given by~\eqref{eq:ex-nonpos-diag-Omega}, by its first column. This describes the flow completely, since $n=|J|=2$. The left panel shows the phase portrait of the flow within the planes $\{ S_{21} = 1 \}$ and $\{ S_{31} = 0 \}$. The plane $\{ S_{11} = \frac{1}{2} \}$ is depicted by the right panel. The line $\mathcal{L}$ of equilibria given by~\eqref{eq:ex-nonpos-diag-L} is marked red and located in the lower right vertex in the right plot. The phase portrait illustrates that this line attracts the flow within a small neighborhood.} \label{fig:example_nonpos_diag}
\end{figure}
\subsubsection{\texorpdfstring{Constructing $3\times 3$ Systems with Various Asymptotic Properties}{Constructing 3x3 Systems with Various Asymptotic Properties}}\label{sec:3x3counter-examples}
In this section, we construct a family of $S$-flows~\eqref{eq:prop_sflow} in terms of a class of nonnegative parameter matrices $\Omega$, that may violate assumption~\eqref{eq:sym-Omega} which underlies Theorem~\ref{thm:convergence}. Accordingly, for a small problem size $n=3$, we explicitly specify flows that exhibit one of the following behaviors:
\begin{enumerate}
\item $t \mapsto S(t)$ converges towards a point $S^* \in \overline{\mathcal{W}}$ as $t \rightarrow \infty$;
\item $t \mapsto S(t)$ is periodic with some period $t_1 > 0$;
\item $t \mapsto S(t)$ neither converges to a point nor is periodic.
\end{enumerate}
These cases are discussed below as Example~\ref{ex:non-convergent_s-flow} and illustrated by Figure~\ref{fig:non-convergent}. They demonstrate that assumption~\eqref{eq:sym-Omega} is not too strong, because violation may easily imply that the flow fails to converge to an equilibrium.

Let $\mc{D}$ denote the set of \textit{doubly stochastic, circulant} matrices. We consider the case $m=|I|=|J|=n$ and therefore have $\mc{D} \subset\ol{\mathcal{W}}$. Let
\begin{equation}
P \in \{0,1\}^{n \times n},\qquad
P_{ij} = \begin{cases} 1, &\text{if $i-j \equiv 1\ (\operatorname{mod} n)$,} \\ 0, &\text{else} \end{cases}
\end{equation}
denote the permutation matrix that represents the $n$-cycle $(1,\dotsc,n)$. Then $\mc{D}$ is the convex hull of the matrices $\{P, P^2, \dots, P^n \}$ with $P^n = I_n$, and any element $M \in \mathcal{D}$ admits the representation
\begin{equation}
M = \sum_{k \in [n]} \mu_{k} P^k \quad \text{with} \quad \mu \in \Delta_{n}.
\end{equation}
Since the matrices $P, P^2, \dots, P^n \in \R^{n \times n}$ are linearly independent, the vector ${\mu \in \Delta_n}$ is uniquely determined. We will call $\mu$ the \textit{representative} of $M \in \mathcal{D}$. The following Lemma characterizes two matrix products on $\mc{D}$ in terms of the corresponding matrix representatives.
\begin{lemma} \label{lem:dot_in_D}
Let $\mu^{(1)}, \mu^{(2)} \in \Delta_{n}$ be the representatives of any two matrices $M^{(1)}, M^{(2)} \in \mathcal{D}$. Then the element-wise Hadamard product and the ordinary matrix product, respectively, are given by
\begin{align}
M^{(1)} \odot M^{(2)} &= \sum_{k \in [n]} \eta_{k} P^k \quad \text{with} \quad \eta = \mu^{(1)} \odot \mu^{(2)} \in \R_{\geq 0}^n,
\label{eq:D-Hadamard-product} \\ \label{eq:D-ordinary-product}
M^{(1)} M^{(2)} &= \sum_{k \in [n]} \mu_{k} P^k \quad \text{with} \quad \mu = M^{(1)} \mu^{(2)} \in \Delta_n.
\end{align}
\end{lemma}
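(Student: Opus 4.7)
The plan is to exploit the simple algebraic structure of the generating set $\{P, P^2, \dots, P^n\}$: distinct cyclic permutation matrices have disjoint supports, while their ordinary product behaves like cyclic addition on exponents. Once these two elementary facts are recorded, both identities follow from bilinearity and the uniqueness of the representative.

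First, I would observe from the definition that $(P^k)_{ij} = 1$ if and only if $i - j \equiv k \pmod n$. Since the residue classes $k \bmod n$ are distinct for $k \in [n]$ (identifying $n$ with $0$), the supports of $P^{k}$ and $P^{l}$ are disjoint whenever $k \neq l$, and coincide (with common value $1$) when $k = l$. This yields
\begin{equation}
P^k \odot P^l = \delta_{kl}\, P^k, \qquad k,l \in [n].
\end{equation}
Expanding $M^{(1)} \odot M^{(2)} = \sum_{k,l \in [n]} \mu^{(1)}_k \mu^{(2)}_l (P^k \odot P^l)$ and using the displayed identity immediately gives $\eta_k = \mu^{(1)}_k \mu^{(2)}_k$, i.e.\ \eqref{eq:D-Hadamard-product}. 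Nonnegativity of $\eta$ is clear; no constraint on its sum is claimed.

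For \eqref{eq:D-ordinary-product}, the key observation is $P^{k} P^{l} = P^{k+l}$, where exponents are taken modulo $n$ (using $P^{n} = I_{n}$). Hence by bilinearity
\begin{equation}
M^{(1)} M^{(2)} = \sum_{k,l \in [n]} \mu^{(1)}_k \mu^{(2)}_l P^{k+l} = \sum_{r \in [n]} \Big(\sum_{k \in [n]} \mu^{(1)}_k \mu^{(2)}_{r-k}\Big) P^{r},
\end{equation}
with indices $r-k$ read modulo $n$. By the linear independence of $P^1,\dots,P^n$, the coefficient vector $\mu \in \R^n$ of $M^{(1)}M^{(2)}$ is uniquely determined by the inner sum. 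To identify $\mu$ with $M^{(1)}\mu^{(2)}$, I would compute $M^{(1)}_{rj} = \sum_k \mu^{(1)}_k (P^k)_{rj} = \mu^{(1)}_{r-j}$ and substitute: $(M^{(1)}\mu^{(2)})_r = \sum_j \mu^{(1)}_{r-j}\mu^{(2)}_j$, which after the reindexing $k = r-j \pmod n$ matches the parenthesized expression above.

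Finally, it remains to check $\mu \in \Delta_n$. Nonnegativity is immediate since $M^{(1)}$ and $\mu^{(2)}$ have nonnegative entries, and
\begin{equation}
\sum_{r \in [n]} \mu_r = \sum_{r,j} M^{(1)}_{rj} \mu^{(2)}_j = \sum_j \mu^{(2)}_j \sum_r M^{(1)}_{rj} = \sum_j \mu^{(2)}_j = 1,
\end{equation}
using that $M^{(1)}$ is doubly stochastic (columns sum to $1$) and $\mu^{(2)} \in \Delta_n$. There is no real obstacle in this argument; the only minor care required is bookkeeping the cyclic index convention $\{1,\dots,n\}$ with $n \equiv 0$, so that $P^{k+l}$ is unambiguously interpreted as one of the generators.
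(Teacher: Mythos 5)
Your proof is correct and follows essentially the same route as the paper: the disjoint-support identity $P^k \odot P^l = \delta_{kl}P^k$ for the Hadamard product, and the cyclic-convolution bookkeeping $P^kP^l = P^{k+l \bmod n}$ identified with $M^{(1)}\mu^{(2)}$ for the ordinary product. Your explicit verification that $\mu \in \Delta_n$ via column-stochasticity of $M^{(1)}$ is a small welcome addition that the paper's proof leaves implicit.
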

\begin{proof}
We note that the $k$th power of $P$ is given by
\begin{equation} \label{eq:Pkij}
(P^k)_{ij} = \begin{cases} 1, &\text{if $i-j \equiv k\ (\operatorname{mod} n)$,} \\ 0, &\text{else.} \end{cases}
\end{equation}
This implies $P^k \odot P^l = \delta_{kl} P^k$ for $k,l \in [n]$, with $\delta_{kl}$ denoting the Kronecker delta, and
\begin{align}
\begin{split}
M^{(1)} \odot M^{(2)} 
	&= \bigg( \sum_{k \in [n]} \mu_k^{(1)} P^k \bigg) \odot \bigg( \sum_{l \in [n]} \mu_l^{(2)} P^l \bigg)
	= \sum_{k,l \in [n]} \mu_k^{(1)} \mu_l^{(2)} P^k \odot P^l \\
	&= \sum_{k \in [n]} \mu_k^{(1)} \mu_k^{(2)} P^k.
\end{split}
\end{align}
As for~\eqref{eq:D-ordinary-product}, we compute
\begin{subequations}
\begin{align}
M^{(1)} M^{(2)}
	&\stackrel{\hphantom{(\ref{eq:Pkij})}}{=} \sum_{k,j \in [n]} \mu_{k}^{(1)} \mu_{j}^{(2)} P^{k+j}
	= \sum_{i \in [n]}~\sum_{k+j \equiv i (\operatorname{mod} n)} \mu_{k}^{(1)} \mu_{j}^{(2)} P^{i} \\
	&\stackrel{(\ref{eq:Pkij})}{=} \sum_{i \in [n]} \sum_{k \in [n]} \mu_{k}^{(1)} ( P^{k} \mu^{(2)})_i \, P^i
	= \sum_{i \in [n]} ( M^{(1)} \mu^{(2)} )_i \, P^i.
\end{align}
\end{subequations} \qed
\end{proof}
The following proposition shows that the $S$-flow on $\mc{D}$ can be expressed by the evolution of the corresponding representative.
\begin{proposition}
Let $\Omega \in \mathcal{D}$ and suppose the $S$-flow~\eqref{eq:prop_sflow} is initialized at $S(0) \in \mathcal{D}$. Then the solution $S(t) \in \mathcal{D}$ evolves on $\mc{D}$ for all $t \in \R$. In addition, the corresponding representative $p(t) \in \Delta_n$ of $S(t) = \sum_{k \in [n]} p_{k}(t) P^{k}$ satisfies the replicator equation
\begin{equation}\label{eq:p-representative-S-flow}
\dot{p} = R_{p}(\Omega p).
\end{equation}
\end{proposition}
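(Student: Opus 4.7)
The plan is to verify that the ansatz $S(t) = \sum_{k \in [n]} p_k(t) P^k$ with $p(t) \in \Delta_n$ is consistent with the $S$-flow~\eqref{eq:prop_sflow} whenever $p$ satisfies~\eqref{eq:p-representative-S-flow}; the claim then follows from uniqueness of solutions already established for the $S$-flow. Concretely, I would substitute this ansatz into $\dot S = R_S(\Omega S)$, use Lemma~\ref{lem:dot_in_D} to keep everything inside the linear span of $\{P,P^2,\dots,P^n\}$, and read off the induced ODE on the representative $p$.

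The main computation is the following. By Lemma~\ref{lem:dot_in_D}\,\eqref{eq:D-ordinary-product}, $\Omega S = \sum_k (\Omega p)_k P^k$, so its $i$th row is the same cyclic rearrangement of $\Omega p$ that $S_i$ is of $p$; explicitly $(S_i)_j = p_{(i-j)\bmod n}$ and $(\Omega S)_{ij} = (\Omega p)_{(i-j)\bmod n}$. Since a common cyclic reindexing of both factors leaves the Euclidean inner product invariant, one obtains
\begin{equation}
\langle S_i,(\Omega S)_i\rangle \;=\; \langle p,\Omega p\rangle \;=:\;\alpha,\qquad \forall i\in I,
\end{equation}
independent of $i$. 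Writing $R_{S_i}(\Omega S)_i = S_i\odot(\Omega S)_i-\alpha\, S_i$ and using Lemma~\ref{lem:dot_in_D}\,\eqref{eq:D-Hadamard-product} to evaluate $S\odot(\Omega S)=\sum_k p_k(\Omega p)_k P^k$, one gets
\begin{equation}
\dot S \;=\; \sum_{k\in[n]} \bigl(p_k(\Omega p)_k-\alpha\, p_k\bigr)P^k \;=\; \sum_{k\in[n]} \bigl(R_p(\Omega p)\bigr)_k P^k,
\end{equation}
which is precisely the image under $p\mapsto \sum_k p_k P^k$ of the right-hand side of the replicator equation~\eqref{eq:p-representative-S-flow}.

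It remains to check that the induced flow keeps $p$ in $\Delta_n$, after which uniqueness does the rest. Since $\mathbb{1}_n^\top R_p=0$ on $\{p:\langle\mathbb{1}_n,p\rangle=1\}$, the sum constraint is preserved; and since each coordinate equation has the factor $p_k$ on the right, the facets $\{p_k=0\}$ are invariant, so $p(t)\in\Delta_n$ for all $t\in\R$ as long as the solution exists, which is global by the same boundedness argument used in the existence proof for the $S$-flow. Thus $\tilde S(t):=\sum_k p_k(t)P^k$ takes values in $\mc{D}\subset\ol{\mc{W}}$ and solves $\dot{\tilde S}=R_{\tilde S}(\Omega \tilde S)$ with $\tilde S(0)=S(0)$; by uniqueness from Lipschitz continuity of the $S$-flow, $S(t)=\tilde S(t)\in\mc{D}$ for all $t\in\R$, and $p(t)$ is its representative. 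The only mildly delicate step is the invariance-of-inner-product observation that makes $\alpha$ independent of $i$; once that is noted, the rest is bookkeeping within the algebra spanned by the powers of $P$.
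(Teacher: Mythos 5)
Your proof is correct and follows essentially the same route as the paper's: both rest on Lemma~\ref{lem:dot_in_D} to compute $\Omega S$ and $S\odot\Omega S$ within the span of $\{P,\dots,P^n\}$, observe that $\langle S_i,(\Omega S)_i\rangle=\langle p,\Omega p\rangle$ independently of $i$, and thereby identify $R_S(\Omega S)$ with $\sum_k\bigl(R_p(\Omega p)\bigr)_kP^k$. Your explicit ansatz-plus-uniqueness wrapper and the cyclic-reindexing argument for the inner product are only cosmetic variations on the paper's direct reading-off of the ODE for the representative.
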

\begin{proof}
Let $S = \sum_{k \in [n]} p_k P^k \in \mathcal{D}$ with $p \in \Delta_n$.
Lemma~\ref{lem:dot_in_D} implies
\begin{equation}\label{eq:Hadamard-S-Omega-S}
S \odot \Omega S = \sum_{k \in [n]} p_k (\Omega p)_k \, P^k.
\end{equation}
Therefore, for any $i \in [n]$,
\begin{align}
\begin{split}
\langle S_i, (\Omega S)_i \rangle 
	&= \langle \eins_{n}, S_i \odot (\Omega S)_i \rangle = \big\langle \eins_{n}, \big( S \odot (\Omega S) \big)_i \big\rangle \\
	&= \sum_{k \in [n]} p_k (\Omega p)_k \underbrace{\la\eins_{n},(P^{k})_{i}\ra}_{=1} = \langle p, \Omega p \rangle.
\end{split}
\end{align}
Since this equation holds for any $i \in [n]$, the right-hand side of the $S$-flow~\eqref{eq:prop_sflow} can be rewritten as
\begin{subequations}\label{eq:ReplS_Replp}
\begin{align}
R_{S}(\Omega S)
&= S \odot (\Omega S) - \langle p, \Omega p \rangle S
\overset{\eqref{eq:Hadamard-S-Omega-S}}{=}
\sum_{k \in [n]}\Big(p_k (\Omega p)_k \, P^k  - \langle p, \Omega p \rangle p_{k} P^{k}\Big)
\\
&= \sum_{k \in [n]} v_k P^k \quad \text{with} \quad v = p \odot(\Omega p) - \langle p, \Omega p \rangle p = R_{p}(\Omega p).
\end{align}
\end{subequations}
Since $p \in \Delta_{n}$, we have $\la v,\eins_{n}\ra = 0$, that is $v$ is tangent to $\Delta_{n}$. Hence, by~\eqref{eq:ReplS_Replp}, $\dot S = \sum_{k \in [n]} \dot p_{k} P^{k} = R_{S}(\Omega S) $ is determined by $\dot p = v = R_{p}(\Omega p)$, whose solution $p(t)$ evolves on $\Delta_{n}$. \qed
\end{proof}
The following proposition introduces a restriction of parameter matrices $\Omega \in \mc{D}$ that ensures, for any such $\Omega$, that the product $\prod_{j \in [n]} p_{j}$ changes monotonously depending on the flow~\eqref{eq:p-representative-S-flow}.
\begin{proposition} \label{prop:prod_monotone}
Let $\Omega  = \sum_{k \in [n]} \mu_{k} P^{k} \in \mathcal{D}$ be parametrized by
\begin{equation} \label{eq:mu_restricted}
\mu = \alpha e_{n} + \frac{\beta}{n} \eins_n + \sum_{k < \big\lfloor \tfrac{n}{2} \big\rfloor} \gamma_k (e_{k} - e_{n-k}) \in \Delta_n,\qquad
\alpha,\beta,\gamma_{1},\dotsc,\gamma_{\lfloor \tfrac{n}{2}\rfloor-1} \in \R.
\end{equation}
Suppose $p(t) \in \mathcal{S} = \rint(\Delta_n)$ solves~\eqref{eq:p-representative-S-flow}. Then
\begin{equation}\label{eq:prop-dt-pi-p}
\frac{\D}{\D t} \prod_{j \in [n]} p_j(t)
\left\{ \begin{aligned}
&< 0, \;\text{if}\; \alpha > 0
\\
&= 0, \;\text{if}\; \alpha = 0
\\
&> 0, \;\text{if}\; \alpha < 0
\end{aligned}\right\}, \quad \text{for} \quad p(t) \neq \tfrac{1}{n} \eins_n.
\end{equation}
\end{proposition}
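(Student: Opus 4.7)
The plan is to compute the logarithmic derivative of $\Pi(t) := \prod_{j \in [n]} p_j(t)$ directly from the replicator equation~\eqref{eq:p-representative-S-flow} and then exploit the very specific structure of $\mu$ in~\eqref{eq:mu_restricted} to simplify the resulting quadratic form $\langle p,\Omega p\rangle$ to a single explicit expression in $\|p\|^2$.

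First I would write
\begin{equation}
\frac{\D}{\D t} \log \Pi = \sum_{j \in [n]} \frac{\dot p_j}{p_j} = \sum_{j \in [n]} \big((\Omega p)_j - \langle p, \Omega p\rangle\big) = \langle \eins_n,\Omega p\rangle - n\,\langle p, \Omega p\rangle.
\end{equation}
Because $\Omega\in\mathcal{D}$ is doubly stochastic, $\eins_n^{\T}\Omega = \eins_n^{\T}$ and hence $\langle \eins_n,\Omega p\rangle = \langle \eins_n, p\rangle = 1$, reducing the task to determining the sign of $1 - n\langle p,\Omega p\rangle$.

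Next I would evaluate $\langle p,\Omega p\rangle$ termwise along the decomposition~\eqref{eq:mu_restricted}, using $P^n = I_n$ and $\sum_{k\in[n]} P^k = \eins_n\eins_n^{\T}$. The key observation—and the only mildly nontrivial step—is that each antisymmetric pair contributes nothing: since $P^{n-k} = (P^k)^{-1} = (P^k)^{\T}$, one has $p^{\T} P^{n-k} p = p^{\T}(P^k)^{\T} p = p^{\T} P^k p$, and therefore $\langle p,(P^k-P^{n-k})p\rangle = 0$ for every admissible $k$. The $\alpha e_n$ term contributes $\alpha\|p\|^2$ (because $P^n = I_n$), and the term $\frac{\beta}{n}\eins_n$ contributes $\frac{\beta}{n}\langle p,\eins_n\rangle = \frac{\beta}{n}$. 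Combined with the constraint $\mu\in\Delta_n$, which yields $\alpha+\beta = 1$ (the $\gamma_k$ contributions cancel in the sum), I obtain
\begin{equation}
\langle p,\Omega p\rangle = \alpha\|p\|^2 + \frac{1-\alpha}{n},
\qquad \text{hence} \qquad
1 - n\langle p,\Omega p\rangle = \alpha\bigl(1 - n\|p\|^2\bigr).
\end{equation}

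Finally, Cauchy--Schwarz applied to $p$ and $\eins_n$ gives $n\|p\|^2 \geq \langle \eins_n,p\rangle^2 = 1$, with equality if and only if $p = \tfrac{1}{n}\eins_n$. Therefore, for $p(t)\neq \tfrac{1}{n}\eins_n$, the factor $1 - n\|p\|^2$ is strictly negative, so $\tfrac{\D}{\D t}\log\Pi(t)$ has the opposite sign of $\alpha$ (and vanishes identically when $\alpha = 0$). Since $p(t)\in\mathcal{S}$ implies $\Pi(t)>0$, the sign of $\tfrac{\D}{\D t}\Pi(t)$ agrees with that of $\tfrac{\D}{\D t}\log\Pi(t)$, yielding exactly the three cases asserted in~\eqref{eq:prop-dt-pi-p}. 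The only step requiring a small amount of care is the cancellation of the antisymmetric $\gamma_k$ contributions via the identity $P^{n-k} = (P^k)^{\T}$; the remainder is bookkeeping with the doubly stochastic and circulant structure.
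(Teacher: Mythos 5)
Your proposal is correct and follows essentially the same route as the paper: reduce $\tfrac{\D}{\D t}\prod_j p_j$ to the sign of $1-n\langle p,\Omega p\rangle$ via double stochasticity, evaluate $\langle p,\Omega p\rangle$ termwise using $P^n=I_n$, $\sum_{k\in[n]}P^k=\eins_n\eins_n^{\T}$ and the cancellation $\langle p,(P^k-P^{n-k})p\rangle=0$ from $P^{n-k}=(P^k)^{\T}$, and conclude from $\|p\|^2>\tfrac1n$ for $p\neq\tfrac1n\eins_n$. The only cosmetic difference is that you justify $\|p\|^2\geq\tfrac1n$ by Cauchy--Schwarz against $\eins_n$, while the paper derives it from $\langle p,P^kp\rangle\leq\langle p,p\rangle$; both are fine.
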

\begin{proof}
Set $\pi_{p} \coloneqq \prod_{j \in [n]} p_j$. By virtue of~\eqref{eq:p-representative-S-flow} and $\la\eins_{n},\Omega p\ra=\la \Omega^{\T}\eins_{n},p\ra=1$ ($\Omega$ is doubly stochastic and $p \in \Delta_{n}$), we have
\begin{equation}\label{eq:dt-pi-p-proof}
\frac{\D}{\D t} \pi_{p} = \pi_{p}\sum_{j \in [n]} \big( (\Omega p)_j - \langle p, \Omega p \rangle \big) = \pi_{p} \big( 1 - n \langle p, \Omega p \rangle \big).
\end{equation}
Hence, since $\pi_{p} > 0$ for $p \in \mathcal{S}$, $\frac{\D}{\D t} \pi_{p}$ has the same sign as $\tfrac{1}{n} - \langle p, \Omega p \rangle$.
Regarding the term
\begin{equation} \label{eq:pOmegap}
\langle p, \Omega p \rangle = \sum_{k \in [n]} \mu_k \langle p, P^k p \rangle,
\end{equation}
we have the following three cases:
\begin{enumerate}
\item[($\alpha$)] for all $k < n$, the inequality $\langle p, P^k p \rangle \leq \langle p, p \rangle = \langle p, P^n p \rangle$ holds, with equality if and only if $p = \tfrac 1n \eins_n$;
\item[($\beta$)] $\sum_{k \in [n]} \langle p, P^k p \rangle = \langle p, \eins_{n \times n} \, p \rangle = 1$;
\item[($\gamma$)] for all $k \in [n]$, $\langle p, P^k p \rangle = \langle p, P^{n-k} p \rangle$, since $P^{-1} = P^\top$.
\end{enumerate}
Inserting~\eqref{eq:mu_restricted} into~\eqref{eq:pOmegap} and applying $(\alpha),(\beta),(\gamma)$ gives
\begin{equation}
\langle p, \Omega p \rangle = \alpha \langle p, p \rangle + \beta \frac{1}{n} \quad \text{and} \quad \langle p, p \rangle > \frac{1}{n} \sum_{k \in [n]} \langle p, P^k p \rangle = \frac{1}{n} \quad \text{for} \quad p \neq \tfrac{1}{n} \eins_n.
\end{equation}
Since $\langle \mu, \eins_n \rangle = \alpha + \beta = 1$, we further obtain
\begin{equation}\label{eq:ineq-p-Omega-p}
\langle p, \Omega p \rangle
\left\{\begin{aligned}
&> \tfrac 1n,\;\text{if}\; \alpha > 0
\\
&= \tfrac 1n,\;\text{if}\; \alpha = 0
\\
&< \tfrac 1n,\;\text{if}\; \alpha < 0
\end{aligned}\right\},\quad \text{for all} \quad p \in \Delta_n \setminus \{ \tfrac{1}{n} \eins_{n} \}.
\end{equation}
Combining~\eqref{eq:ineq-p-Omega-p} and~\eqref{eq:dt-pi-p-proof} yields~\eqref{eq:prop-dt-pi-p}. \qed
\end{proof}
\begin{remark}\label{rem:Omega-circulant}
Based on Proposition~\ref{prop:prod_monotone}, we observe: If $\alpha > 0$, then $p(t)$ moves towards the (relative) boundary of the simplex $\Delta_n$, for any $p(0) \neq \tfrac 1n \eins_n$. If $\alpha < 0$, then $p(t)$ converges towards the barycenter $\tfrac 1n \eins_n$.
For $\alpha = 0$, the product $\prod_{j \in [n]} p_j(t)$ is constant over time.
\end{remark}
The scalars $\gamma_k$ in~\eqref{eq:mu_restricted} steer the skew-symmetric part of $\Omega$. Consequently,
if $\gamma_k = 0$ for all $k$, then $\Omega$ is symmetric and the $S$-flow converges to a single point by Theorem~\ref{thm:convergence}.
Depending on the skew-symmetric part, the $S$-flow may not converge to a point, as Example~\ref{ex:non-convergent_s-flow} below will demonstrate for few explicit instances and $n=3$. Note that, in this case $n=3$,~\eqref{eq:mu_restricted} describes a parametrization rather than a restriction of $\Omega \in \mathcal{D}$.
\begin{example} \label{ex:non-convergent_s-flow}
Let $n=3$. The matrix $\Omega \in \mathcal{D}$ take the form
\begin{align}
\mu &= \alpha e_3 + \tfrac{\beta}{3}\eins_3 + \gamma (e_1 - e_2), \\
\Omega &= \begin{pmatrix} \mu_3 & \mu_2 & \mu_1 \\ \mu_1 & \mu_3 & \mu_2 \\ \mu_2 & \mu_1 & \mu_3 \end{pmatrix}
	= \alpha \begin{pmatrix} 1 & 0 & 0 \\ 0 & 1 & 0 \\ 0 & 0 & 1 \end{pmatrix} + \frac{\beta}{3} \begin{pmatrix} 1 & 1 & 1 \\ 1 & 1 & 1 \\ 1 & 1 & 1 \end{pmatrix} + \gamma \begin{pmatrix} 0 & -1 & 1 \\ 1 & 0 & -1 \\ -1 & 1 & 0 \end{pmatrix}
\label{eq:Omega3-decomp}
\end{align}
with the constraint $\mu \in \Delta_3$, i.e.
\begin{equation}
\alpha + \beta = 1, \quad \alpha+\frac{\beta}{3} \geq 0, \quad \frac{\beta}{3} \geq |\gamma|.
\end{equation}
We examine the behavior of the flow~\eqref{eq:p-representative-S-flow}, depending on the parameters $\alpha$ and $\gamma$.
Note, that the flow does not depend on the parameter $\beta$ that merely ensures $\Omega$ to be row-stochastic. \par
\textbf{Case $\alpha < 0$.}
As already discussed (Remark~\ref{rem:Omega-circulant}), $p(t)$ converges to the barycenter in this case.
Depending on $\gamma$, this may happen with ($\gamma \neq 0$) or without ($\gamma = 0$) a spiral as depicted by Figure~\ref{fig:non-convergent} (a) and (b). \par
\textbf{Case $\alpha = 0$.} We distinguish the two cases $\gamma = 0$ and $\gamma \neq 0$.
If $\gamma = 0$, then we have $\Omega = \tfrac 13 \eins_{3\times 3}$ and therefore $\dot{p} = R_{p} \Omega p \equiv 0$, i.e., each point $p^* \in \Delta_3$ is an equilibrium.
In contrast, if $\gamma \neq 0$, then we have the (standard) rock-paper-scissors dynamics~\cite[Chapter 10]{schecter2016game}:
\begin{equation}
\dot{p} = \gamma \begin{pmatrix} p_1 (p_3 - p_2) \\ p_2 (p_1 - p_3) \\ p_3 (p_2 - p_1) \end{pmatrix} \neq 0, \quad \text{for } p \in \Delta_3 \setminus \{ e_1, e_2, e_3, \tfrac 13 \eins_3 \}.
\end{equation}
Starting at a point $p_0 \in \rint(\Delta_3) \setminus \{ \tfrac 13 \eins_3 \}$, the curve $t \mapsto p(t)$ moves along the closed curve $\big\{ p \in \Delta_3 \colon \prod_{j} p_j = \prod_{j} p_{0,j} \big\}$, i.e., the curve $t \mapsto p(t)$ is periodic; see Figure~\ref{fig:non-convergent} (c). \par
\textbf{Case $\alpha > 0$.}
We distinguish again the two cases $\gamma = 0$ and $\gamma \neq 0$.
If $\gamma = 0$, then the flow reduces to $\dot{p} = \alpha R_p p$ whose solution converges to
\begin{equation}
\lim_{t \rightarrow \infty} p(t) = \tfrac{1}{|J^*|} \sum_{j \in J^*} e_j \in \Delta_3, \quad \text{with} \quad J^* = \operatorname*{arg\, max}_{j \in [3]}~p_j(0).
\end{equation}
As for the remaining case $\alpha > 0$ and $\gamma \neq 0$, we distinguish $\alpha > |\gamma|$ and $\alpha \leq |\gamma|$ as illustrated by Figure~\ref{fig:non-convergent} (e), (f) and (g).
If $\alpha \leq |\gamma|$, then we have a generalized rock-paper-scissors game~\cite[Chapter 10]{schecter2016game}.
The curve $t \mapsto p(t)$ spirals towards the boundary of the simplex $\Delta_3$ and does not converge to a single point.
In contrast, if $\alpha > |\gamma|$, then the flow converges to a point on the boundary. In fact, the vertices of the simplex are attractors.
\end{example}

\begin{figure}
\begin{center}
\begin{tabular}{ccc}
\includegraphics[width=0.2\textwidth]{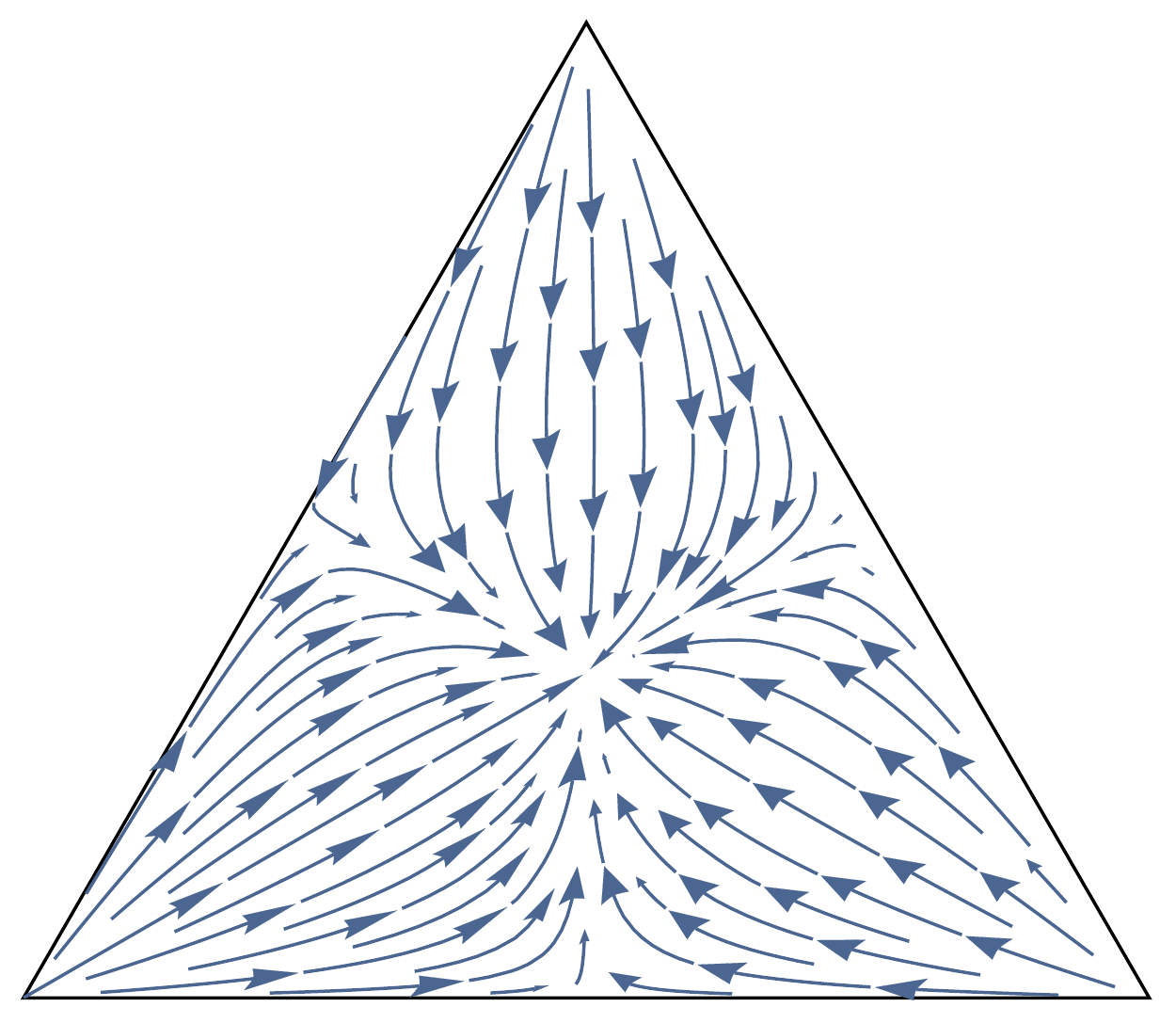} &
\includegraphics[width=0.2\textwidth]{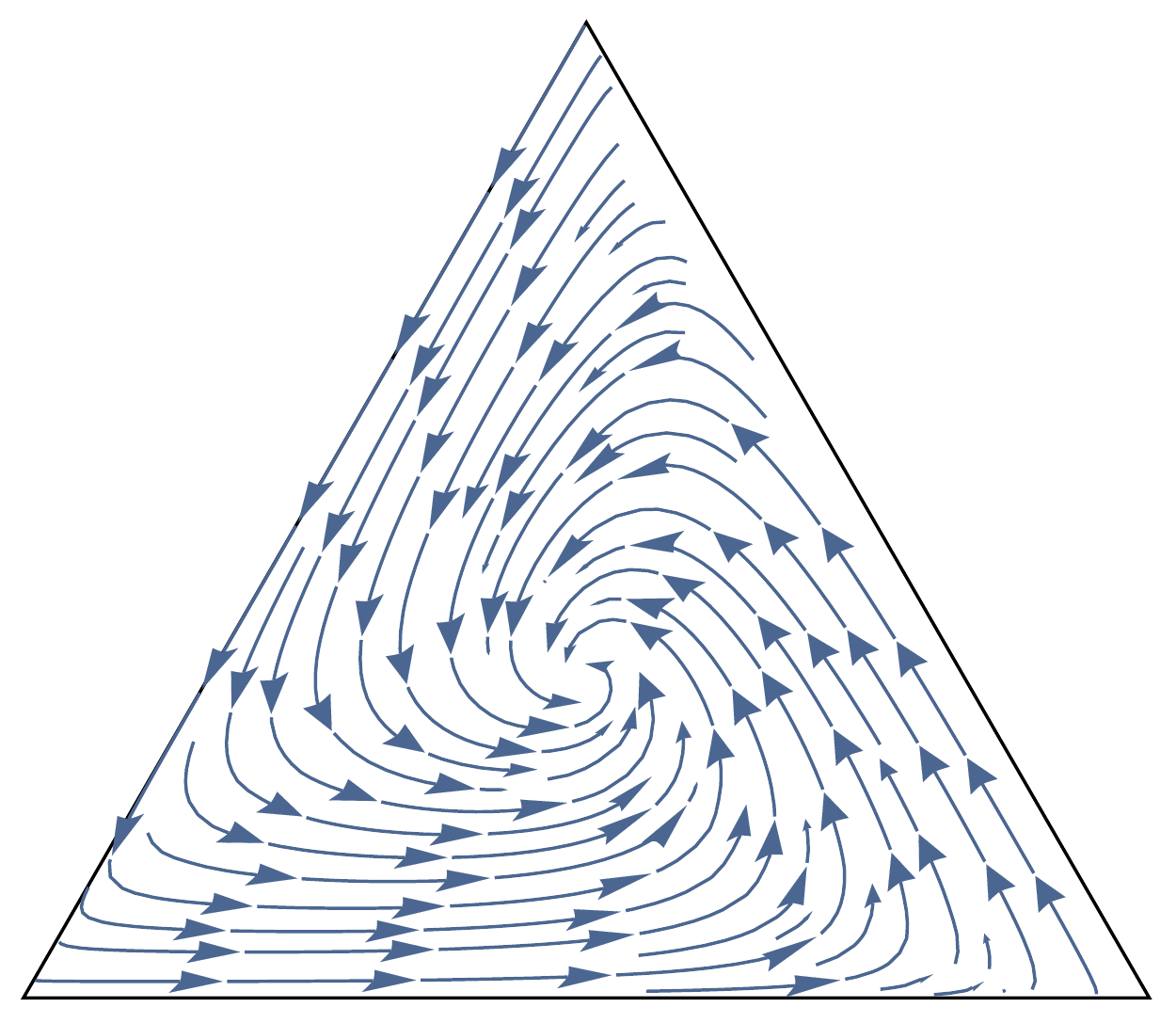} &
\includegraphics[width=0.2\textwidth]{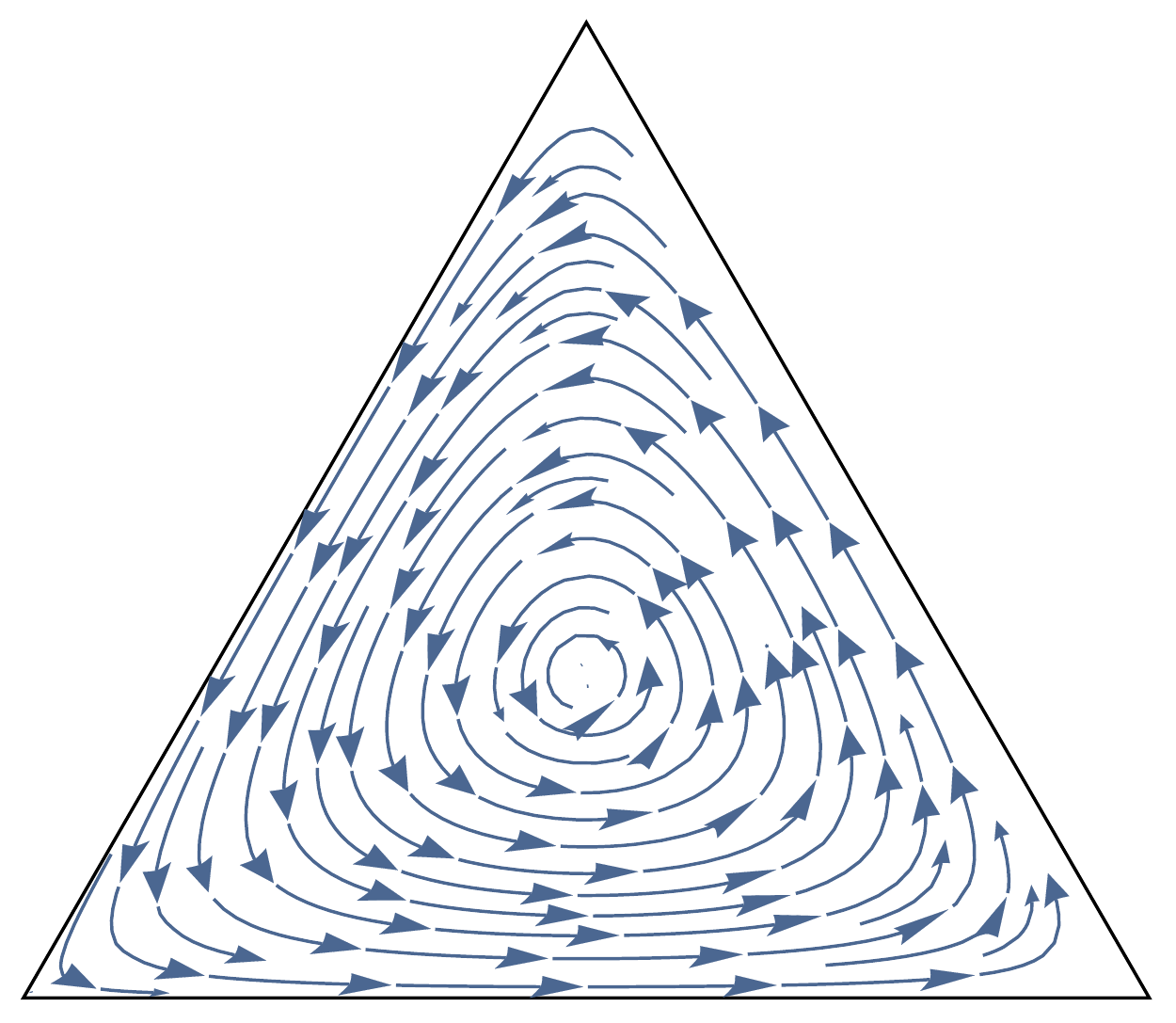} \\
\textbf{(a)} $\alpha < 0$, $\gamma = 0$ &
\textbf{(b)} $\alpha < 0$, $\gamma \neq 0$ &
\textbf{(c)} $\alpha = 0$, $\gamma \neq 0$
\end{tabular}
\begin{tabular}{cccc}
\includegraphics[width=0.2\textwidth]{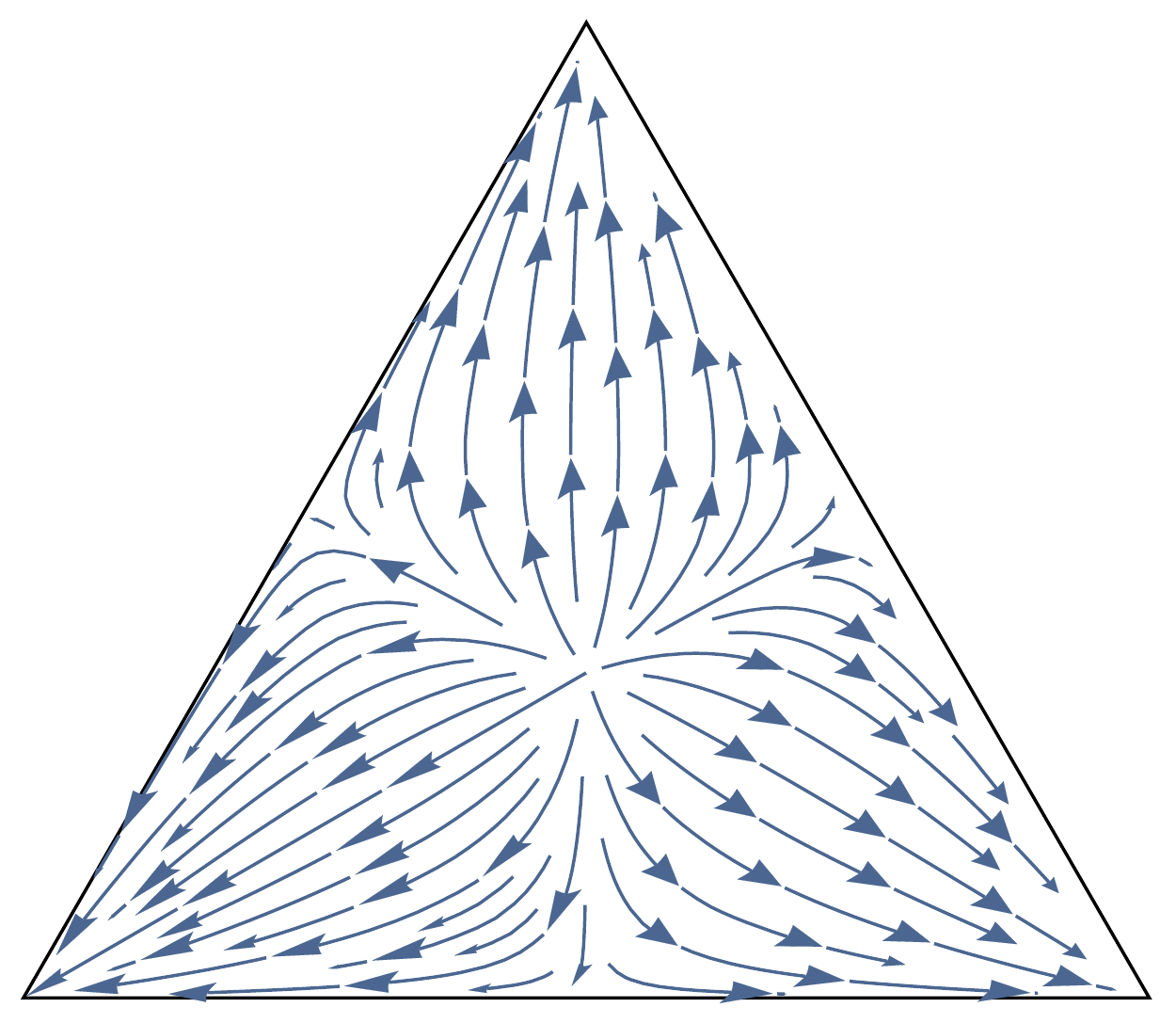} &
\includegraphics[width=0.2\textwidth]{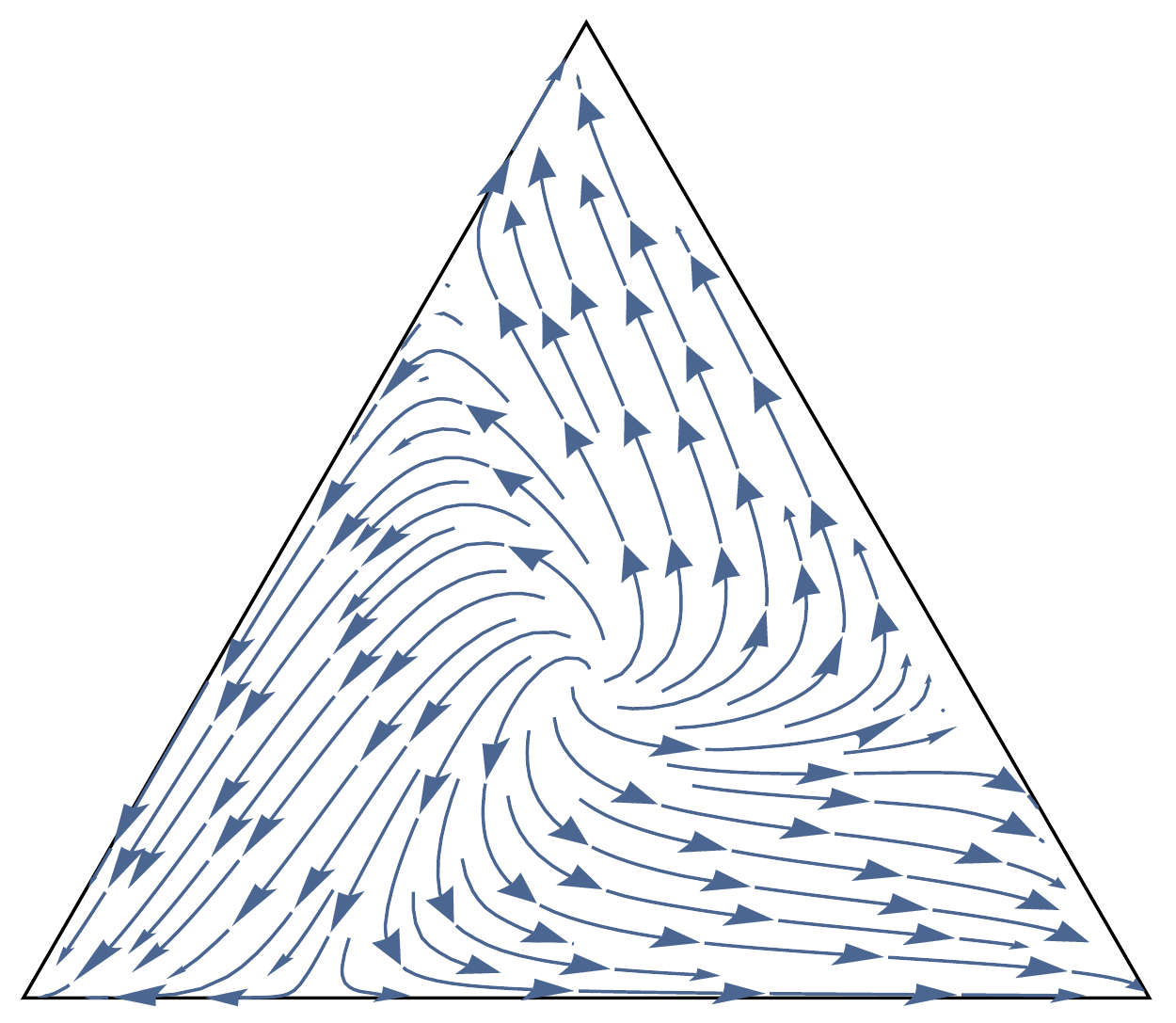} &
\includegraphics[width=0.2\textwidth]{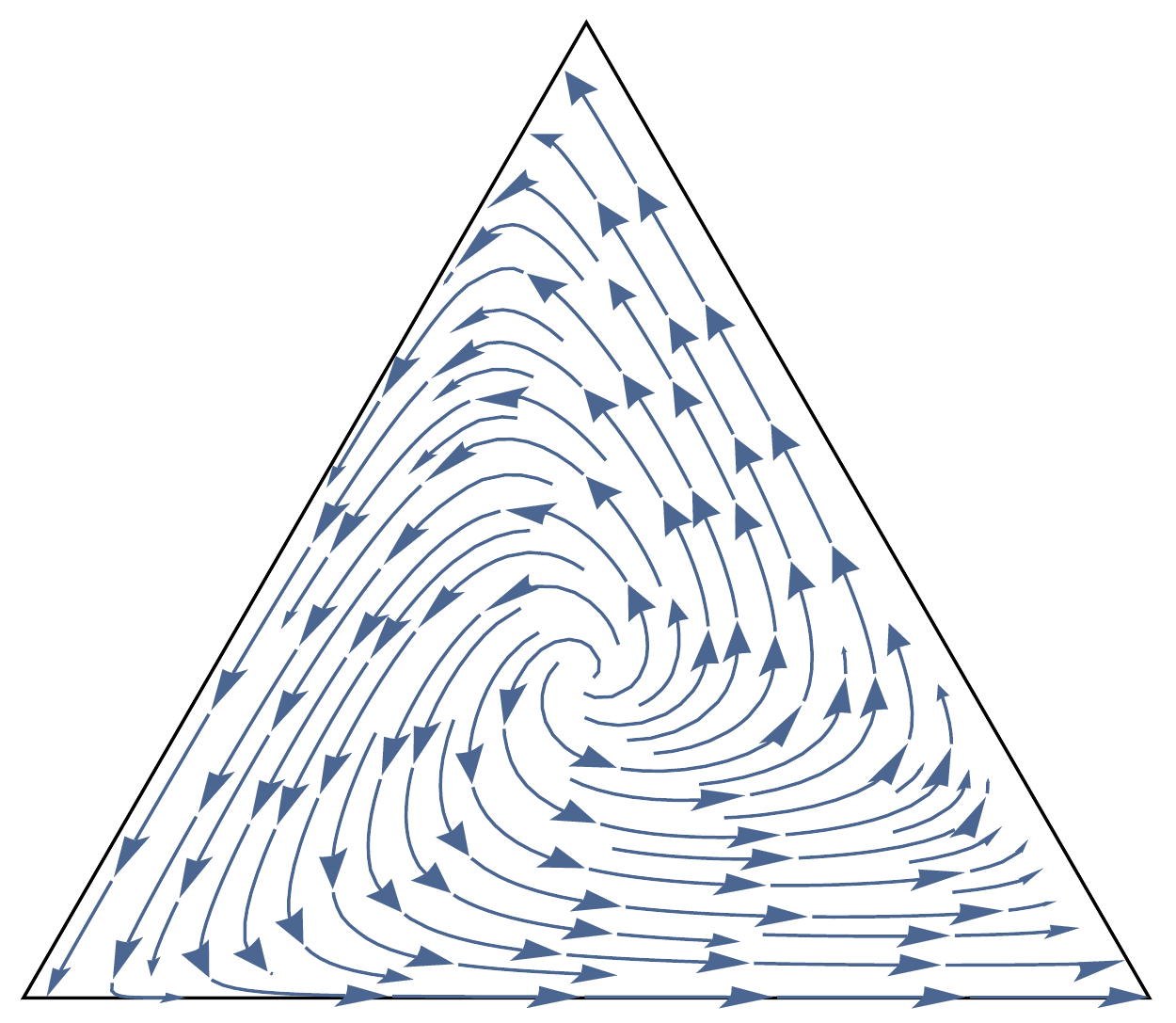} &
\includegraphics[width=0.2\textwidth]{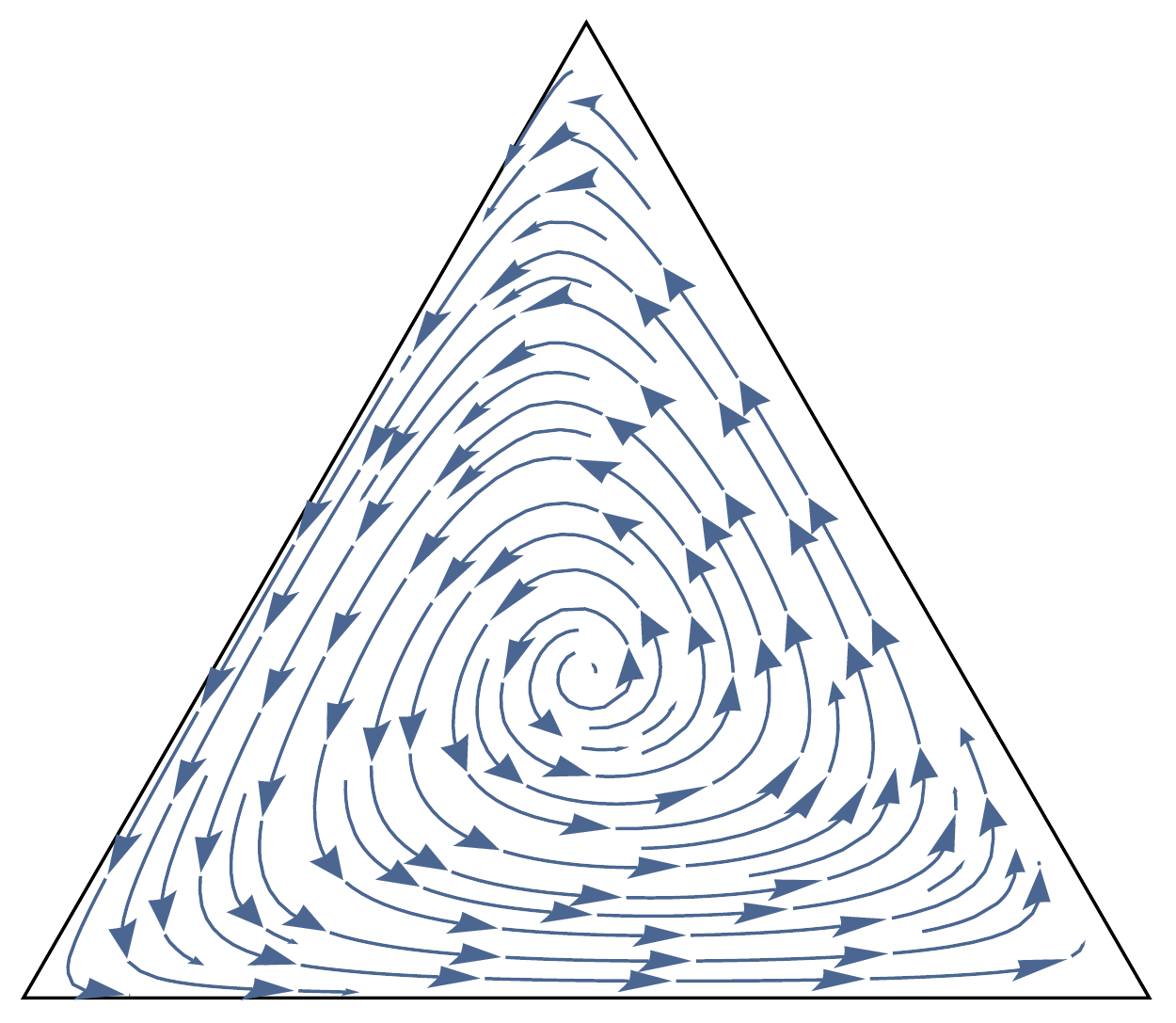} \\
\textbf{(d)} $\alpha > 0$, $\gamma = 0$ &
\textbf{(e)} $\alpha > |\gamma| > 0$ &
\textbf{(f)} $\alpha = |\gamma| > 0$ &
\textbf{(g)} $0 < \alpha < |\gamma|$
\end{tabular}
\end{center}
\caption[Phase portraits for the flows $\dot{p} = R_p(\Omega p)$ of Example~\ref{ex:non-convergent_s-flow}.]{%
\textbf{Phase portraits for the flows \boldmath$\dot{p} = R_p(\Omega p)$ of Example~\ref{ex:non-convergent_s-flow}.}
$\Omega$ is parameterized as specified by~\eqref{eq:Omega3-decomp}.
Parameter $\alpha$ controls whether the flow evolves towards the barycenter ($\alpha < 0$) as in \textbf{(a)} and \textbf{(b)}, or towards the boundary of the simplex ($\alpha > 0$) as in \textbf{(d)}-\textbf{(g)}.
Parameter $\gamma$ controls the rotational component of the flow.
In \textbf{(c)}, the flow neither evolves towards the barycenter nor towards the boundary, and the rotational component of the flow causes periodic orbits.
If $\alpha > 0$, then the convergence of the flow depends on the size of $\gamma$.
If $0 \leq |\gamma| < \alpha$ as in \textbf{(d)} and \textbf{(e)}, then the flow converges to a point on the boundary.
If $|\gamma| \geq \alpha$ as in \textbf{(f)} and \textbf{(g)}, then the flow spirals towards the boundary without converging to a single point.} \label{fig:non-convergent}
\end{figure}
Example~\ref{ex:non-convergent_s-flow} is devoted to the $S$-flow~\eqref{eq:prop_sflow} that parametrizes the assignment flow~\eqref{eq:prop_wflow}, as specified by Proposition~\ref{prop:W-from-S}. The following examples illustrate how the assignment flow may behave if the $S$-flow does not converge to an equilibrium point.
\begin{example} \label{ex:non-convergent_w-flow}
This example continues Example~\ref{ex:non-convergent_s-flow}. Accordingly, we consider the case $n=3$ and assume $\Omega \in \mc{D}$. Let the distance matrix $D$, whose row vectors define the mappings~\eqref{eq:def-Si} corresponding to the assignment flow, be given by
\begin{equation} \label{eq:input_nonconvergent_D}
D = \begin{pmatrix} 0 & 1 & 1 \\ 1 & 0 & 1 \\ 1 & 1 & 0 \end{pmatrix}.
\end{equation}
Then, if $\Omega \in \mathcal{D}$, the initial value $S(0) = \exp_{\baryW}(-\Omega D)$ of the $S$-flow~\eqref{eq:prop_sflow} lies in $\mathcal{D}$ as well.
Hence, the above observations of Example~\ref{ex:non-convergent_s-flow} for the $S$-flow hold.
The resulting assignment flow $t \mapsto W(t)$ then also evolves in $\mathcal{D}$ which can be verified using~\eqref{eq:W-from-S}.
As for the averaging parameters $\Omega$, we consider the following three matrices in $\mathcal{D}$:
\begin{equation} \label{eq:input_nonconvergent_Omega}
\Omega_{\text{center}} = \begin{pmatrix} 0 & 0 & 1 \\ 1 & 0 & 0 \\ 0 & 1 & 0 \end{pmatrix}, \quad
\Omega_{\text{cycle}} = \frac{1}{3} \begin{pmatrix} 1 & 0 & 2 \\ 2 & 1 & 0 \\ 0 & 2 & 1 \end{pmatrix}, \quad
\Omega_{\text{spiral}} = \frac{1}{5} \begin{pmatrix} 2 & 0 & 3 \\ 3 & 2 & 0 \\ 0 & 3 & 2 \end{pmatrix}.
\end{equation}
Figure~\ref{fig:non-convergentW} displays the trajectories of the assignment flow for these averaging matrices.
The symmetry of these plots results from $W(t) \in \mathcal{D}$.

Matrix $\Omega_{\text{center}}$ corresponds to the parameters $(\alpha,\beta,\gamma) = (-\tfrac{1}{2},\tfrac{3}{2},\tfrac{1}{2})$ of~\eqref{eq:Omega3-decomp}, for which the $S$-flow converges to the barycenter.
As a consequence, $W(t)$ converges to a point in $\mathcal{W} \setminus \{ \baryW \}$.

Matrix $\Omega_{\text{cycle}}$ corresponds to the parameters $(\alpha,\beta,\gamma) = (0,1,\tfrac{1}{3})$, for which the $S$-flow has periodic orbits.
Since these orbits are symmetric around the barycenter, i.e.\ $\int_{0}^{t_1} \big( S(t) - \baryW \big) \D t = 0$ with $t_1$ being the period of the trajectory, the trajectory $t \mapsto W(t)$ is also periodic as a consequence of equation~\eqref{eq:W-from-S}.

Finally, matrix $\Omega_{\text{spiral}}$ corresponds to the parameters $(\alpha,\beta,\gamma) = (0.1,0.9,0.3)$, for which
the $S$-flow spirals towards the boundary of the simplex. It is not clear a priori if $t \mapsto W(t)$ does not converge to a single point either.
The trajectory of $W(t)$ shown by Figure~\ref{fig:non-convergentW} suggests that the assignment flow also spirals towards the boundary of the simplex without converging to a single point.
\end{example}
\begin{figure}
\begin{center}
\begin{tabular}{ccc}
\includegraphics[width=0.29\textwidth]{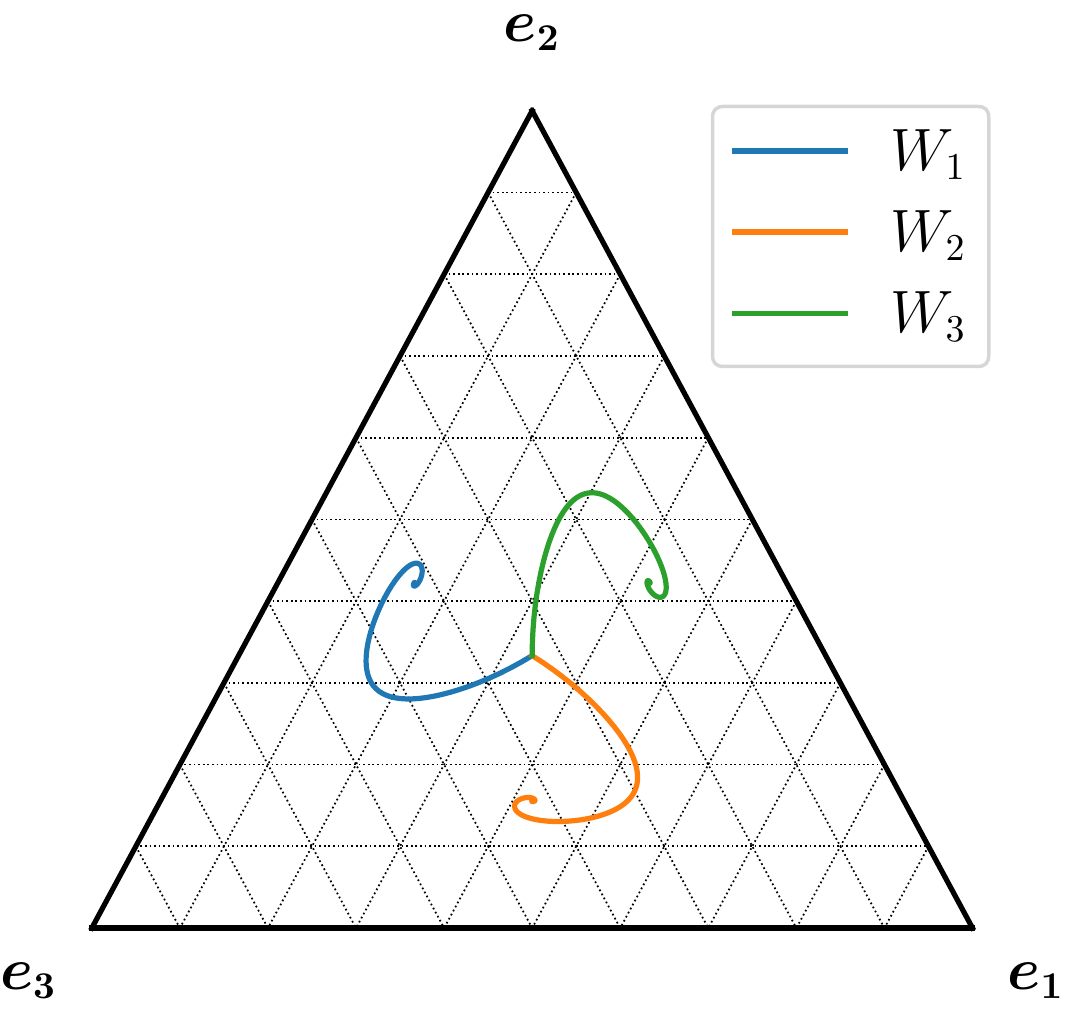} &
\includegraphics[width=0.29\textwidth]{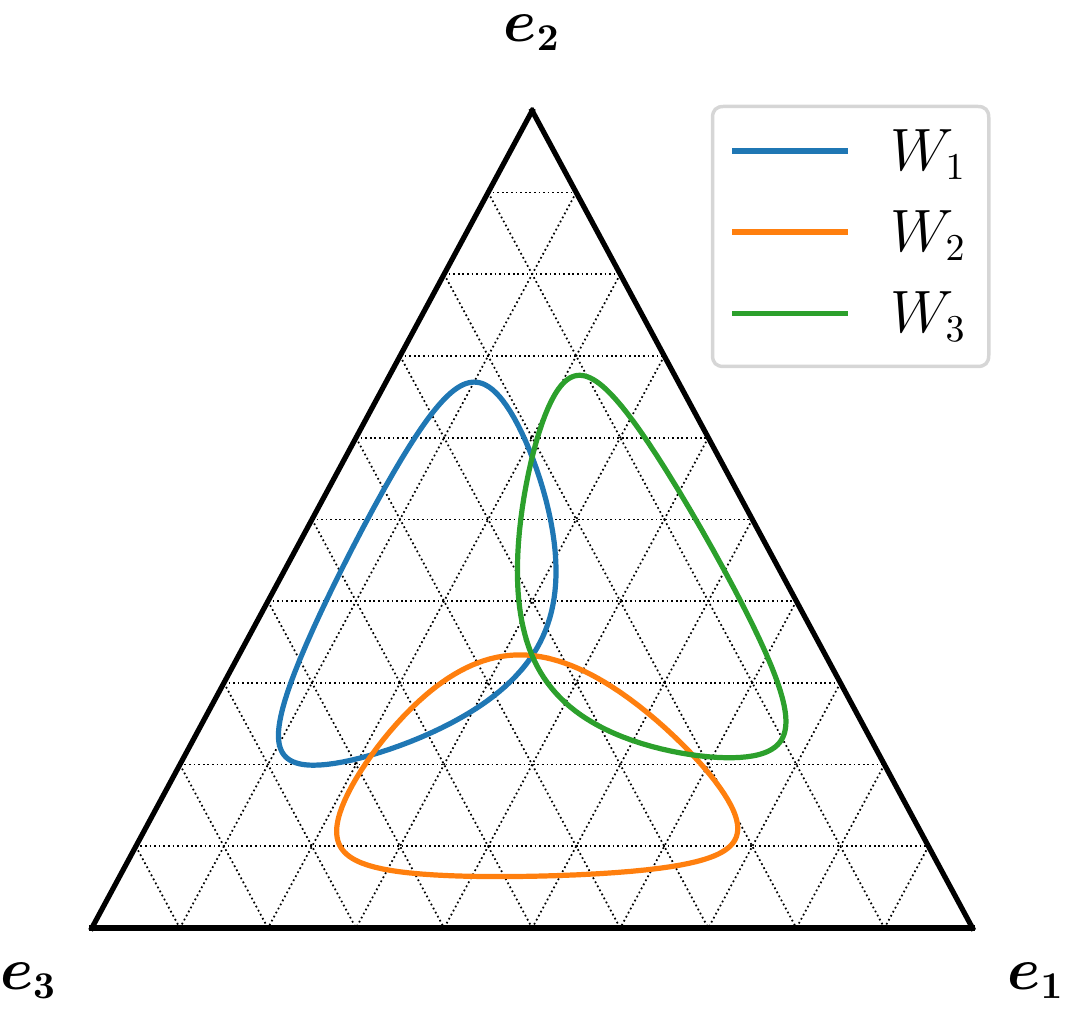} &
\includegraphics[width=0.29\textwidth]{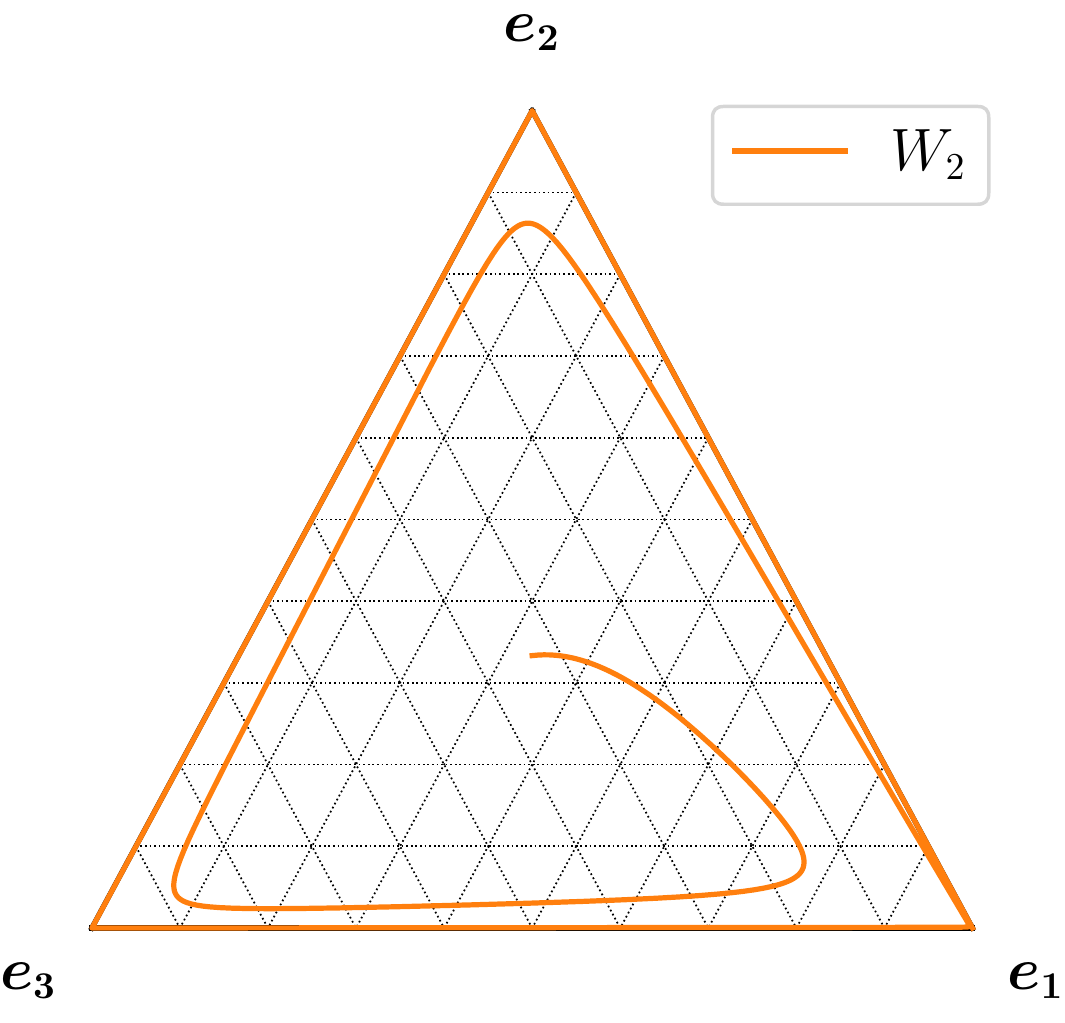} \\
$\Omega_{\text{center}}$ & $\Omega_{\text{cycle}}$ & $\Omega_{\text{spiral}}$
\end{tabular}
\end{center}
\caption[Trajectories of the assignment flows in example~\ref{ex:non-convergent_w-flow}.]{%
\textbf{Trajectories of the assignment flows in example~\ref{ex:non-convergent_w-flow}.}
The input data is given in~\eqref{eq:input_nonconvergent_D} and~\eqref{eq:input_nonconvergent_Omega}.
The flow for the matrix $\Omega_{\text{center}}$ converges to a point in the interior of the assignment manifold.
This limit point differs from the barycenter.
The trajectory for the averaging matrix $\Omega_{\text{cycle}}$ is a closed curve.
The trajectory for $\Omega_{\text{spiral}}$ is spiraling towards the boundary of the simplex.
For the sake of clarity, the trajectory of only one data point is plotted for $\Omega_{\text{spiral}}$.
The trajectories for the other data points can be obtained from that one by permuting the label indices.} \label{fig:non-convergentW}
\end{figure}

\begin{remark}\label{rem:discussion-convergence}
Examples~\ref{ex:non-convergent_s-flow} and~\ref{ex:non-convergent_w-flow} considered the special case $m=|I| = |J|=n=3$. We observed in further experiments similar behaviors also in the case $|J| < |I|$.
For example, it can be verified, for $|J|=2$ and $\Omega = \Omega_{\text{cycle}}$ from Example~\ref{ex:non-convergent_w-flow}, that the $S$-flow possesses a (unstable) limit cycle, i.e.\ a periodic orbit.

The above examples also demonstrate that several symmetries in the input data are required, e.g.\ $\Omega \in \mathcal{D}$ and $S_0 \in \mathcal{D}$, in order to obtain nonconvergent orbits. However, small perturbations like numerical errors or the omnipresent noise in \textit{real} data will break these symmetries. Therefore, it is very unlikely to observe such behavior of the $S$-flow and the assignment flow, respectively, in practice.
\end{remark}
\subsubsection{Geometric Averaging and Spatial Shape}\label{sec:ex-termination}
We design and construct a small academical example that, despite its simplicity, illustrates the following important points:
\begin{itemize}
\item
the region of attraction due to Corollary~\ref{prop:eps_unif}, here for the special case of uniform averaging parameters $\Omega$ (and likewise more generally for nonuniform $\Omega$ (Proposition~\ref{prop:eps_est})), that enables to terminate the numerical scheme and rounding to the \textit{correct} labeling;
\item
the influence of $\Omega$ on the spatial shape of patterns created through data labeling, which provides the basis for pixel-accurate `semantic' image labeling;
\item
undesired asymptotic behavior of the \textit{numerically} integrated assignment
flow---cf.~Remark~\ref{rem:Steidl} below---cannot occur when using proper geometric numerical integration, like the scheme~\eqref{eq:Euler-scheme} or any scheme devised by~\cite{Zeilmann:2018aa}.
\end{itemize}
\begin{example}\label{ex:stability}
We consider a $12 \times 12$ RGB image $u \colon I \rightarrow [0,1]^3$ shown by Figure~\ref{fig:stability-example}.
\begin{figure}[htbp]
\begin{center}
\begin{tabular}{cc}
\includegraphics[width=0.2\textwidth]{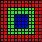} &
\includegraphics[width=0.2\textwidth]{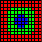} \\
input image & $S^*$
\end{tabular}
\end{center}
\caption[Illustration of input and output of example~\ref{ex:stability}.]{%
\textbf{Illustration of input and output of Example~\ref{ex:stability}.} The input image consisting of three colors, which was used for computing the distance matrix $D$, is shown on the left.
This distance matrix was used to initialize the $S$-flow, whose limit is illustrated by the image on the right. This is a minimal example that demonstrates how stability conditions~\eqref{eq:stable} constrain spatial shape.

} \label{fig:stability-example}
\end{figure}
The three unit vectors $e_{j},\,j \in J=[3]$ define the labels that are marked by the colors red, green and blue.
For spatial regularization we used $3 \times 3$ neighborhoods $\mc{N}_{i},\,i \in I$ with uniform weights $\omega_{ik} = \tfrac{1}{|\mathcal{N}_i|},\, k \in \mc{N}_{i}$, with shrunken neighborhoods if they intersect the boundary of the underlying quadratic domain.
The distance matrix $D$ that initializes the $S$-flow by $S_0 = \exp_{\baryW}(-\Omega D)$, was set to $D_{ij} = 10 \cdot \| u_i - e_j \|_2,\,i \in I,\,j \in J$.

Adopting the termination criterion from~\cite{Astroem2017}, we numerically integrated the $S$-flow using the scheme~\eqref{eq:Euler-scheme}, until iteration $T$ when the average entropy dropped below $10^{-3}$, i.e.
\begin{equation}\label{eq:termination-entropy}
-\frac{1}{|I| \log |J|} \sum_{i\in I,j\in J} S_{ij}^{(T)} \log S_{ij}^{(T)} < 10^{-3}.
\end{equation}
The resulting assignment $S^{(T)}$ was rounded to the integral assignment $S^* \in \Wstar$ depicted by the right panel of Figure~\ref{fig:stability-example}. We observe the following.
\begin{enumerate}[(i)]
\item
The resulting labeling $S^{\ast}$ differs from the input image although \textit{exact} (integral) input data are used.

This conforms to Corollary~\ref{cor:stability-S}(b), which enables to recognize the input data as \textit{unstable}. As a consequence, the green and blue labels at the corners of the corresponding quadrilateral shapes in the input data are replaced by the flow. The resulting labeling $S^{\ast}$ is stable, as one easily verifies using Corollary~\ref{cor:stability-S}(a).

This simple example and the corresponding observation points to a fundamental question to be investigated in future work: how can $\Omega$ be used for `storing' prior knowledge about the shape of labeling patterns?
\item
Using the estimate~\eqref{eq:eps_unif} which is the special case of~\eqref{eq:eps_est} in the case of uniform weights,
we computed
\begin{equation}
\varepsilon_{\mathrm{est}} = \varepsilon_{\mathrm{unif}} = 0.2.
\end{equation}
Since the distance between $S^*$ and the assignment $S^{(T)}$ obtained after terminating numerical integration due to~\eqref{eq:termination-entropy}, satisfied
\begin{equation}
\max_{i \in I}~\| S_i^{(T)} - S_i^* \|_1 \approx 0.00196 < \varepsilon_{\mathrm{est}},
\end{equation}
we had the \textit{guarantee} due to Proposition~\ref{prop:region-attraction-Euler} that $S^{(t)}$ converges for $t > T$ to $S^*$, i.e.\ that no label indicated by $S^{(T)}$ can change anymore.
With regard to Proposition~\ref{prop:approx-discr-euler}, the estimate \eqref{eq:approx-discr-euler} implies for sufficiently small step size ${h > 0}$ that the continuous $S$-flow $S(hT)$ also lies in the attracting region $B_{\varepsilon}(S^*)$. 
Proposition~\ref{prop:attraction} then states the convergence of the $S$-flow to $S^*$. 
Eventually, the continuous assignment flow~\eqref{eq:AF} converge to $S^*$ by Proposition~\ref{prop:limit-W}.
\end{enumerate}
\end{example}
\begin{remark}[numerical integration and asymptotic behavior]\label{rem:Steidl}
The authors of~\cite{Astroem2017} adopted a numerical scheme from~\cite{losert1983dynamics} which, when adapted and applied to~\eqref{eq:AF-i}, was shown in~\cite{bergmann2017iterative} to always converge to a constant solution as $t \to \infty$, i.e.\ a \textit{single} label is assigned to every pixel, which clearly is an unfavorable property. Irrespective of the fact that uniform positive weights were used by~\cite{Astroem2017}, that satisfy assumption~\eqref{eq:sym-Omega}, this strange asymptotic behavior resulted from the fact that the adaption of the discrete scheme of~\cite{losert1983dynamics} implicitly uses \textit{different} step sizes for updating the flow $S_{i}$ at different locations $i \in I$.

Our results in this paper show that the continuous-time assignment flow does not exhibit this asymptotic behavior, under appropriate assumptions on the parameter matrix $\Omega$. In addition, point (ii) above and Proposition~\ref{prop:region-attraction-Euler} show that using a proper geometric scheme from~\cite{Zeilmann:2018aa} turns condition~\eqref{eq:termination-entropy} into a sound criterion for terminating the numerical scheme, followed by safe rounding to an integral labeling.
\end{remark}
%



\section{Conclusion}\label{sec:Conclusion}
We established in this paper that under reasonable assumptions on the weight parameters $\Omega$, the assignment flow approach is a sound method for contextual data classification on graphs. Favourable properties like convergence to integral assignments and existence of corresponding basins of attraction extend to sequences generated by discrete-time schemes for geometric integration. This shows that geometric numerical integration of the assignment flow yields sound numerical algorithms. A range of counter-examples demonstrate that these conditions are not too strong, since violating them may quickly lead to unfavorable behavior of the assignment flow regarding classification.

The results provide a proper basis and justify recent work on learning the assignment flow parameters $\Omega$ from data \cite{Huhnerbein:2021th,Zeilmann:2021wt,Zeilmann:2021ul}, on extending the approach to unsupervised data classification on graphs \cite{Zern:2020ab,Zisler:2020aa} or taking additional spatial constraints into account \cite{Sitenko:2021vu}. Our future work will focus on deeper parametrizations of assignment flows using the same mathematical framework and on studying their properties and performance for statistical data classification on graphs.

\appendix

\section{Proofs}\label{sec:Proofs}

\subsection{Proof of Proposition~\ref{prop:limit-W}}
\begin{proof}
\begin{enumerate}[(a)]
\item Let $\beta_i \coloneqq \tfrac{1}{2} \min \{S_{i j^*(i)}^* - S_{ij}^*\}_{j \neq j^*(i)} > 0$. Since
\begin{equation}
\lim_{t\to\infty} S_{i j^*(i)}(t) - S_{ij}(t) = S_{i j^*(i)}^* - S_{ij}^* \geq 2 \beta_i > 0, \quad \forall j \in J \setminus \{ j^*(i) \},
\end{equation}
there exists $t_1 \geq 0$ such that
\begin{equation}
S_{i j^*(i)}(t) - S_{ij}(t) > \beta_i, \quad \forall t \geq t_1, \quad \forall j \in J \setminus \{ j^*(i) \}.
\end{equation}
We estimate
\begin{subequations}
\begin{align}
\| W_i(t) &- e_{j^*(i)}\|_1
\\
	&= 1 - W_{i j^*(i)} + \sum_{j \neq j^*(i)} W_{ij}
	= 2 - 2 W_{i j^*(i)}
	\overset{\eqref{eq:W-from-S}}{=}
2 - 2 \frac{\exp\Big( \int_{0}^{t} S_{i j^*(i)}(\tau) \D\tau \Big)}{\sum_{j \in J} \exp\Big( \int_{0}^{t} S_{ij}(\tau) \D\tau \Big)} \\
	&= 2 \frac{\sum_{j \neq j^*(i)} \exp\Big( \int_{0}^{t} S_{ij}(\tau) \D\tau \Big)}{\sum_{j \in J} \exp\Big( \int_{0}^{t} S_{ij}(\tau) \D\tau \Big)} \\
	&= 2 \frac{\sum_{j \neq j^*(i)} \exp\Big( \int_{0}^{t} \big( S_{ij}(\tau) - S_{i j^*(i)}(\tau) \big) \D\tau \Big)}{1 + \sum_{j \neq j^*(i)} \exp\Big( \int_{0}^{t} \big( S_{ij}(\tau) - S_{i j^*(i)}(\tau) \big) \D\tau \Big)} \\
	&\leq 2 \sum_{j \neq j^*(i)} \exp\Big( \int_{0}^{t} \big( S_{ij}(\tau) - S_{i j^*(i)}(\tau) \big) \D\tau \Big) \\
	&= 2 \sum_{j \neq j^*(i)} \exp\Big( \int_{0}^{t_1} \big( S_{ij}(\tau) - S_{i j^*(i)}(\tau) \big) \D\tau + \int_{t_1}^{t} \big( \overbrace{S_{ij}(\tau) - S_{i j^*(i)}(\tau)}^{< - \beta_i} \big) \D\tau \Big) \\
	&\leq 2 \sum_{j \neq j^*(i)} \exp\Big( \int_{0}^{t_1} \big( S_{ij}(\tau) - S_{i j^*(i)}(\tau) \big) \D\tau \Big) \cdot e^{-\beta_i (t - t_1)} \\
	&= \underbrace{2 e^{\beta_i t_1} \sum_{j \neq j^*(i)} \exp\Big( \int_{0}^{t_1} \big( S_{ij}(\tau) - S_{i j^*(i)}(\tau) \big) \D\tau \Big)}_{\eqqcolon \alpha_i > 0} \cdot e^{-\beta_i t},
\end{align}
\end{subequations}
which proves~\eqref{eq:prop-limit-W-a}.
\item Let $J^*(i) \coloneqq \argmax_{j \in J}~S_{ij}^*$. For any $j, l \in J^*(i)$, we have
\begin{align}
\begin{split}
\int_{0}^{\infty} \big| S_{ij}(t) - S_{il}(t) \big| \D t 
	&\leq \int_{0}^{\infty} \big| S_{ij}(t) - S_{ij}^* \big| \D t + \int_{0}^{\infty} \big| S_{il}(t) - S_{il}^* \big| \D t \\
	&\leq 2 \int_{0}^{\infty} \| S_i(t) - S_i^* \|_1 \D t < \infty,
\end{split}
\end{align}
where the last inequality follows from the hypothesis of~\eqref{eq:S-flow-fast-concergence}.
Thus, the improper integral \linebreak $\int_{0}^{\infty} {\big( S_{ij}(t) - S_{il}(t) \big)} \D t \in \R$ exists.

If $j \in J^*(i)$, we obtain
\begin{subequations}
\begin{align}
W_{ij}(t)
	&\overset{\eqref{eq:W-from-S}}{=} \frac{\exp\Big( \int_{0}^{t} S_{ij}(\tau) \D\tau \Big)}{\sum_{l \in J} \exp\Big( \int_{0}^{t} S_{il}(\tau) \D\tau \Big)} \\
\begin{split}
	&= \Bigg( 1 + \sum_{l \in J \setminus J^*(i)} \exp\Big( \overbrace{ \int_{0}^{t} \big( S_{il}(\tau) - S_{ij}(\tau) \big) \D\tau }^{\rightarrow -\infty} \Big) \\
	&\qquad\qquad + \sum_{l \in J^*(i) \setminus \{ j \}} \exp\Big( \int_{0}^{t} \big( S_{il}(\tau) - S_{ij}(\tau) \big) \D\tau \Big) \Bigg)^{-1}
\end{split} \\
	&\longrightarrow \Bigg( 1 + \sum_{l \in J^*(i) \setminus \{ j \}} \exp\Big( \int_{0}^{\infty} \big( S_{il}(\tau) - S_{ij}(\tau) \big) \D\tau \Big) \Bigg)^{-1} \in (0,1] \quad \text{for} \quad t \rightarrow \infty,
\end{align}
\end{subequations}
whereas for any $j \in J \setminus J^*(i)$
\begin{subequations}
\begin{align}
W_{ij}(t)
	&= \frac{\exp\Big( \int_{0}^{t} S_{ij}(\tau) \D\tau \Big)}{\sum_{l \in J} \exp\Big( \int_{0}^{t} S_{il}(\tau) \D\tau \Big)} \\
\begin{split}
	&= \Bigg( \overbrace{\sum_{l \in J \setminus J^*(i)} \exp\Big( \int_{0}^{t} \big( S_{il}(\tau) - S_{ij}(\tau) \big) \D\tau \Big)}^{\geq 0} \\
	&\qquad\qquad + \sum_{l \in J^*(i)} \exp\Big( \underbrace{\int_{0}^{t} \big( S_{il}(\tau) - S_{ij}(\tau) \big) \D\tau}_{\rightarrow \infty} \Big) \Bigg)^{-1} 
\end{split} \\
	&\longrightarrow 0 \quad \text{for} \quad t \rightarrow \infty.
\end{align}
\end{subequations} \qed
\end{enumerate}
\end{proof}
\subsection{Proof of Proposition~\ref{prop:Jacobian-eigen}}
\begin{proof}
\begin{enumerate}[(a)]
\item Since $\sigma\big( \tfrac{\partial F}{\partial S}(S^*)^\top \big) = \sigma\big( \tfrac{\partial F}{\partial S}(S^*) \big)$, we may alternatively regard the transpose of the Jacobian
\begin{equation}
\tfrac{\partial F}{\partial S}(S^*)^\top = \begin{pmatrix} B_1^\top & & \\ & \ddots & \\ & & B_{m}^\top \end{pmatrix} + \Omega^\top \otimes I_{n} \cdot \begin{pmatrix} R_{S_{1}^*} & & \\ & \ddots & \\ & & R_{S_{m}^*} \end{pmatrix}
\end{equation}
with $B_i^\top = \Diag\big( (\Omega S^*)_i \big) - \langle S_i^*, (\Omega S^*)_i \rangle I_{n} - (\Omega S^*)_i {S_i^*}^\top$. We have for each $i \in I$,
\begin{subequations}
\begin{alignat}{3}
B_i^\top \eins_{n} &= -\langle S_i^*, (\Omega S^*)_i \rangle \eins_{n}, \quad & R_{S_i^*} \eins_{n} &= 0, \\
B_i^\top e_j &= \big( (\Omega S^*)_{ij} - \langle S_i^*, (\Omega S^*)_i \rangle \big) e_j, \quad & R_{S_i^*} e_j &= 0, \quad \forall j &\in J \setminus \supp(S_i^*).
\end{alignat}
\end{subequations}
Hence, the transposed Jacobian possesses the following eigenpairs:
\begin{subequations} \label{eq:eigenpairsJT-corner}
\begin{align}
\tfrac{\partial F}{\partial S}(S^*)^\top \cdot e_i \otimes \eins_{n} &= -\langle S_i^*, (\Omega S^*)_i \rangle \cdot e_i \otimes \eins_{n}, \quad \forall i \in I, \\
\tfrac{\partial F}{\partial S}(S^*)^\top \cdot e_i \otimes e_j &= \big( (\Omega S^*)_{i j} -\langle S_i^*, (\Omega S^*)_i \rangle \big) \cdot e_i \otimes e_j, \quad \forall j \in J \setminus \supp(S_i^*), \quad \forall i \in I.
\end{align}
\end{subequations}
If $S^* \in \Wstar$, then $|\supp (S_i^*)| = 1$ for each $i \in I$ and therefore~\eqref{eq:eigenpairsJT-corner} specifies all $m n$ eigenpairs and the entire spectrum, which proves~\eqref{eq:spectrum-a}. In this case, the eigenvectors of $\tfrac{\partial F}{\partial S}(S^*)$ can also be stated explicitly: Since $R_{S^*} = 0$, we have
\begin{equation}
\tfrac{\partial F}{\partial S}(S^*) = \begin{pmatrix} B_1 & & \\ & \ddots & \\ & & B_{m} \end{pmatrix}.
\end{equation}
Each block $B_i$ fulfills
\begin{subequations}
\begin{align}
B_i S_i^* &= - \langle S_i^*, (\Omega S^*)_i \rangle S_i^*, \\
B_i (S_i^* - e_j) &= \big( (\Omega S^*)_{ij} - \langle S_i^*, (\Omega S^*)_i \rangle \big) (S_i^* - e_j) \quad \forall j \in J \setminus \supp(S_i^*).
\end{align}
\end{subequations}
Hence, the corresponding eigenvectors of $\tfrac{\partial F}{\partial S}(S^*)$ are
\begin{equation} 
e_i \otimes S_i^*, \quad e_i \otimes (S_i^* - e_j), \quad \forall j \in J \setminus \supp(S_i^*), \quad \forall i \in I.
\end{equation}
\item Since $\Omega S^* = \tfrac{1}{|J_{+}|} (\Omega \eins_{m}) \eins_{J_{+}}^\top$ for $S^*= \tfrac{1}{|J_{+}|} \eins_{m} \eins_{J_{+}}^\top$, we have
\begin{align}
B_i &= ( \Omega \eins_{m} )_{i} \cdot \Big( \tfrac{1}{|J_{+}|} \Diag(\eins_{J_{+}}) - \tfrac{1}{|J_{+}|} I_{n} -\tfrac{1}{|J_{+}|^2} \eins_{J_{+}} \eins_{J_{+}}^\top \Big), \\
R_{S_i^*} &= \tfrac{1}{|J_{+}|} \Diag(\eins_{J_{+}}) - \tfrac{1}{|J_{+}|^2} \eins_{J_{+}} \eins_{J_{+}}^\top
\end{align}
for all $i \in I$, i.e., the Jacobian matrix simplifies to
\begin{multline}
\tfrac{\partial F}{\partial S}(S^*) = \Diag(\Omega \eins_{m}) \otimes B_0 + \Omega \otimes R_{S_1} \\
	\text{with} \quad B_0 = \tfrac{1}{|J_{+}|} \Diag(\eins_{J_{+}}) - \tfrac{1}{|J_{+}|} I_{n} -\tfrac{1}{|J_{+}|^2} \eins_{J_{+}} \eins_{J_{+}}^\top.
\end{multline}
Let $\{ (\lambda_i, w_i) \}_{i \in \tilde{I}} \subset \C \times \C^{m}$ be the set of all eigenpairs of $\Omega$ indexed by $\widetilde I$, and let $\{ v_{1}, \dots, v_{|J_{+}|-1} \}$ be a basis of $\big\{ v \in \R^{n} \colon \langle v, \eins_{J_{+}} \rangle = 0,\ \supp(v) \subseteq J_{+} \big\}$. Note that $|\widetilde{I}| < m$ if and only if $\Omega$ is not diagonalizable. A short calculation shows
\begin{subequations}
\begin{alignat}{3}
B_0 e_j &= -\tfrac{1}{|J_{+}|} e_j, \quad &R_{S_1} e_j &= 0, \quad &&\forall j \in J \setminus J_{+}, \\
B_0 \eins_{J_{+}} &= -\tfrac{1}{|J_{+}|} \eins_{J_{+}}, \quad &R_{S_1} \eins_{J_{+}} &= 0, \quad && \\
B_0 v_j &=  0, \quad &R_{S_1} v_j &= \tfrac{1}{|J_{+}|} v_j, \quad &&\forall j \in \{ 1, \dots, |J_{+}|-1 \}.
\end{alignat}
\end{subequations}
Hence, the Jacobian has the following $m n - (m-|\widetilde{I}|)(|J_{+}|-1)$ eigenpairs:
\begin{subequations}
\begin{align}
\Big( -\tfrac{( \Omega \eins_{m} )_{i}}{|J_{+}|}, e_i \otimes e_j \Big),& \quad \forall j \in J \setminus J_{+}, \quad \forall i \in I, \\
\Big( -\tfrac{( \Omega \eins_{m} )_{i}}{|J_{+}|}, e_i \otimes \eins_{J_{+}} \Big),& \quad \forall i \in I, \\
\Big( \tfrac{\lambda_i}{|J_{+}|}, w_i \otimes v_j \Big),& \quad \forall j \in \{1, \dots, |J_{+}|-1 \}, \quad \forall i \in \widetilde{I}.
\end{align}
\end{subequations}
If $|\widetilde{I}| = m$, we thus have a complete set of $m n$ eigenpairs. If $|\tilde{I}| < m$, we may consider a diagonalizable perturbation $\widetilde{\Omega}$ of $\Omega$. 
By the same argument, we get a complete set of eigenpairs for the perturbed Jacobian matrix. Consequently, we obtain~\eqref{eq:spectrum-bary} by continuity of the spectrum.
\item We show that the real and imaginary parts of the corresponding eigenvector lie in the linear subspace
\begin{equation}
\mathcal{T}_{+} = \mathcal{T}_+(S^*) = \big\{ V \in \R^{m n} \colon \langle V_i, \eins_{n} \rangle = 0,\ \supp(V_i) \subseteq \supp(S_i^*), \; \forall i \in I \big\}
\end{equation}
To this end, we show the two inclusions
\begin{equation}\label{eq:inclusion_R-T-B}
\im R_{S^*} \subseteq \mathcal{T}_{+} \subseteq \ker B,
\end{equation}
where $R_{S^*}$ and $B$ denote the block diagonal matrices
\begin{equation}
B = \begin{pmatrix} B_1 & & \\ & \ddots & \\ & & B_{m} \end{pmatrix}, \quad R_{S^*} = \begin{pmatrix} R_{S_1^*} & & \\ & \ddots & \\ & & R_{S_{m}^*} \end{pmatrix}.
\end{equation}
As for the first inclusion, we use the orthogonal projection onto $\mathcal{T}_{+}$ given by
\begin{multline}
\Pi_{\mathcal{T}_{+}} = \begin{pmatrix} \Pi_{\mathcal{T}_{+},1} & & \\ & \ddots & \\ & & \Pi_{\mathcal{T}_{+},m} \end{pmatrix} \\ \text{with} \quad \Pi_{\mathcal{T}_{+},i} = \Diag( \eins_{J_i} ) - \tfrac{1}{|J_i|} \eins_{J_i} \eins_{J_i}^\top, \quad J_i = \supp(S_i^*) \quad \forall i \in I.
\end{multline}
One can verify that $\Pi_{\mathcal{T}_{+}} R_{S^*} = R_{S^*}$ which implies $\im R_{S^*} \subseteq \im \Pi_{\mathcal{T}_{+}} = \mathcal{T}_{+}$, i.e.\ the first inclusion of~\eqref{eq:inclusion_R-T-B}.

As for the second inclusion, we have to take into account that $S^*$ is an equilibrium point, i.e.\ by~\eqref{eq:equilibrium}
\begin{equation}
(\Omega S^*)_{ij} = \langle S_i^*, (\Omega S^*)_i \rangle \quad \forall j \in \supp(S_i^*) \quad \forall i \in I.
\end{equation}
Since $B$ is a block diagonal matrix, it suffices to examine each block
\begin{equation}
B_i = \Diag\big( (\Omega S^*)_i \big) - \langle S_i^*, (\Omega S^*)_i \rangle I_{n} - S^*_i (\Omega S^*)_i^\top
\end{equation}
separately. Since
\begin{equation}
B_i e_j = (\Omega S^*)_{ij} e_j - \langle S_i^*, (\Omega S^*)_i \rangle e_j - (\Omega S^*)_{ij} S_i^* = -\langle S_i^*, (\Omega S^*)_i \rangle S_i^*, \quad \forall j \in \supp(S_i^*)
\end{equation}
is independent of $j\in \supp(S_i^*)$, we get $B_i v = 0$ for any $v \in \R^{n}$ with $\langle v, \eins_{n} \rangle = 0$ and $\supp(v) \subseteq \supp(S_i^*)$. This verifies the second inclusion of~\eqref{eq:inclusion_R-T-B}.

As a consequence of the two inclusions (\ref{eq:inclusion_R-T-B}), any eigenvector $V$ of $R_{S^*} (\Omega \otimes I_{n})$ corresponding to a nonvanishing eigenvalue $\lambda \neq 0$ has a real and imaginary part lying in $\im R_{S^*} \subseteq \mathcal{T}_{+} \subseteq \ker B$. Therefore, $(\lambda, V)$ is also an eigenpair of $\tfrac{\partial F}{\partial S}(S^*) = B + R_{S^*} (\Omega \otimes I_{n})$.
It remains to show that
\begin{equation}
R_{S^*} (\Omega \otimes I_{n}) = \begin{pmatrix} \omega_{11} R_{S_1^*} & \cdots & \omega_{1 m} R_{S_1^*} \\ \vdots & & \vdots \\ \omega_{m 1} R_{S_{m}^*} & \cdots & \omega_{m m} R_{S_{m}^*} \end{pmatrix}
\end{equation}
has at least one eigenvalue with positive real part. Since the trace
\begin{equation}\label{eq:ProofAtLeastOnePositiveEigenvalue}
\tr\big( R_{S^*} (\Omega \otimes I_{n}) \big) = \sum_{i \in I} \omega_{ii} \tr\big( R_{S_i^*} \big) = \sum_{i \in I} \underbrace{\omega_{ii} \sum_{j \in J} (S_{ij}^* - {S_{ij}^*}^2)}_{\begin{cases} \geq 0, & \forall i \in I \\ > 0, & \text{for some $i \in I$} \end{cases}} > 0
\end{equation}
is positive by assumption, the existence of such an eigenvalue is guaranteed.
\end{enumerate} \qed
\end{proof}
\subsection{Proof of Theorem~\ref{thm:convergence}}

The proof follows after two preparatory Lemmata.
Let $\Lambda \subset \ol{\mathcal{W}}$ be the limit set of the orbit $\{ S(t) \colon t \geq 0 \}$, i.e.
\begin{equation}\label{eq:def-Lambda-critical}
\Lambda = \Lambda(S_0) = \Big\{ S^* \in \ol{\mathcal{W}} \colon \exists (t_k)_{k \in \N} \subset \R_{\geq 0}\;\text{with}\; t_k \rightarrow \infty,\; S(t_k) \rightarrow S^* \Big\}.
\end{equation}
The set $\Lambda\neq\emptyset$ is non-empty since $\ol{\mathcal{W}}$ is compact.
\begin{lemma}
Every point $S^{\ast} \in \Lambda$ of the limit set~\eqref{eq:def-Lambda-critical} is an equilibrium point satisfying the condition of Proposition~\ref{prop:crit-points}(a), which under assumption~\eqref{eq:sym-Omega} reads
\begin{equation}\label{eq:equilibrium-hat}
(\widehat{\Omega} S^*)_{ij} = \langle S_i^*, (\widehat{\Omega} S^*)_i \rangle \quad \forall j \in \supp S_i^* \quad \forall i \in I.
\end{equation}
\end{lemma}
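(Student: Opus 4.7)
The plan is to exhibit a Lyapunov-type function tailored to the symmetry assumption~\eqref{eq:sym-Omega} and then invoke a LaSalle-style invariance argument. Given the decomposition $\Omega = \Diag(w)^{-1}\widehat{\Omega}$ with $\widehat{\Omega}=\widehat{\Omega}^\T$ and $w>0$, I propose to use
\begin{equation*}
J(S) \coloneqq \tfrac{1}{2}\la S, \widehat{\Omega} S\ra,
\end{equation*}
viewed as a function on the compact set $\ol{\mc{W}}$. The key identity is $(\widehat{\Omega} S)_i = w_i (\Omega S)_i$ for each $i\in I$, which is where the hypothesis~\eqref{eq:sym-Omega} enters.

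The main computation will be to differentiate $J$ along the $S$-flow. Using symmetry of $\widehat{\Omega}$ and the identity above,
\begin{align*}
\frac{\D}{\D t} J(S(t)) &= \la \widehat{\Omega} S, \dot S\ra = \sum_{i\in I} \la (\widehat{\Omega} S)_i, R_{S_i}(\Omega S)_i\ra \\
&= \sum_{i\in I} w_i \la (\Omega S)_i, R_{S_i}(\Omega S)_i\ra.
\end{align*}
For any $v \in \R^n$ and $p \in \Delta_n$, a direct calculation using~\eqref{eq:def-Rp} gives $\la v, R_p v\ra = \sum_j p_j v_j^2 - (\la p, v\ra)^2 = \mathrm{Var}_p(v) \geq 0$, with equality if and only if $v_j$ is constant for $j \in \supp(p)$. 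Since $w_i > 0$, this shows $\tfrac{\D}{\D t} J(S(t)) \geq 0$ along the flow, with equality precisely when $(\Omega S)_{ij}$ is constant on $\supp(S_i)$ for every $i \in I$. Multiplying by $w_i$ translates the latter into the stated condition~\eqref{eq:equilibrium-hat} for $\widehat{\Omega}$.

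Next I would conclude via LaSalle's invariance principle. Since $J$ is continuous on the compact set $\ol{\mc{W}}$, it is bounded above; combined with monotonicity along the flow, $t \mapsto J(S(t))$ converges to some $J^* \in \R$. By continuity, $J \equiv J^*$ on the limit set $\Lambda$ defined in~\eqref{eq:def-Lambda-critical}. A standard argument shows $\Lambda$ is positively invariant under the $S$-flow (any orbit starting at a limit point remains in $\Lambda$, by extracting subsequences and using continuous dependence on initial data). Therefore $J$ is constant along any such orbit, so for every $S^* \in \Lambda$,
\begin{equation*}
0 = \frac{\D}{\D t} J(S(t))\Big|_{S(t)=S^*} = \sum_{i\in I} w_i \,\mathrm{Var}_{S_i^*}\big((\Omega S^*)_i\big).
\end{equation*}
Each summand is nonnegative and $w_i>0$, so every variance vanishes. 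This yields $(\Omega S^*)_{ij} = \la S_i^*, (\Omega S^*)_i\ra$ for all $j \in \supp(S_i^*)$ and $i \in I$, which by Proposition~\ref{prop:crit-points}(a) makes $S^*$ an equilibrium, and which rescales to the form~\eqref{eq:equilibrium-hat} via the identity $(\widehat{\Omega} S^*)_i = w_i (\Omega S^*)_i$.

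The main obstacle is really the single algebraic step that the symmetry hypothesis is tailored for, namely recognizing that $\la \widehat{\Omega} S, R_S(\Omega S)\ra$ decomposes into a nonnegative sum of variances. Without~\eqref{eq:sym-Omega}, the quadratic form $\la S,\widehat{\Omega} S\ra$ need not be monotone along the flow (as the counter-examples in Section~\ref{sec:3x3counter-examples} illustrate), so no such global Lyapunov structure is available; everything else is routine once the monotonicity is in hand.
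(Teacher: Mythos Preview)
Your proposal is correct and follows essentially the same approach as the paper: the same Lyapunov function $\tfrac{1}{2}\langle S,\widehat{\Omega}S\rangle$, the same derivative computation (your variance expression $\sum_i w_i\,\mathrm{Var}_{S_i}((\Omega S)_i)$ matches the paper's $\sum_i \tfrac{1}{w_i}\sum_j S_{ij}\big((\widehat{\Omega}S)_{ij}-\langle S_i,(\widehat{\Omega}S)_i\rangle\big)^2$ after rescaling by $w_i^2$), and the same conclusion. The only cosmetic difference is that the paper delegates the LaSalle-type step to \cite[Proposition~1]{losert1983dynamics}, whereas you unpack it explicitly via invariance of $\Lambda$ and constancy of $J$ on $\Lambda$.
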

\begin{proof}
The assertion follows from~\cite[Proposition 1]{losert1983dynamics} if the flow $\dot{S} = F(S)$ on $\ol{\mathcal{W}}$ admits a Lyapunov function $f \colon \ol{\mathcal{W}} \rightarrow \R$, i.e.\ $\frac{\D}{\D t}f(S(t)) = \langle \nabla f(S(t)), F(S(t)) \rangle \geq 0$, with equality only at an equilibrium.

The function $f \colon \ol{\mathcal{W}} \rightarrow \R$,
\begin{equation}\label{eq:f-Lyapunov}
f(S) = \langle S, \widehat\Omega S \rangle
\end{equation}
is a Lyapunov function for the $S$-flow~\eqref{eq:def-S-flow-F}, since
\begin{subequations}\label{eq:f-increases}
\begin{align}
\frac{\D}{\D t} f\big(S(t)\big)
&= 2 \langle \widehat\Omega S, \dot{S} \rangle \label{eq:dfSt_a}
	= 2 \sum_{i \in I} \langle (\widehat{\Omega} S)_i, \dot{S}_i \rangle
	\overset{\la\eins_{n},\dot S_{i}\ra = 0}{=}
	2 \sum_{i \in I} \big\langle (\widehat{\Omega} S)_i - \langle S_i, (\widehat{\Omega} S)_i \rangle \eins_{n}, \dot{S}_i \big\rangle \\
	&\overset{\eqref{eq:def-S-flow-F},\eqref{eq:sym-Omega}}{=} \sum_{i \in I} \frac{2}{w_i} \sum_{j \in J} S_{ij} \Big( (\widehat{\Omega} S)_{ij} - \langle S_i, (\widehat{\Omega} S)_i \rangle \Big)^2
	\geq 0,
\end{align}
\end{subequations}
with equality only if $S$ satisfies the equilibrium criterion~\eqref{eq:equilibrium-hat}. \qed
\end{proof}
Next, we introduce some additional notation.
Let $S^* \in \Lambda$ be an equilibrium with $S(t_k) \rightarrow S^*$. The weighted Kullback-Leibler divergence is defined by
\begin{subequations}\label{eq:def-DKL}
\begin{align}
D_{\mathrm{KL}}^w(S^*, S)
	&= \begin{cases} - \sum_{i \in I} w_i \sum_{j \in \supp(S_i^*)} S_{ij}^* \log \tfrac{S_{ij}}{S_{ij}^*}, &\text{if $\supp(S^*) \subseteq \supp(S)$,} \\ \infty, & \text{else,} \end{cases} \\
	&= \sum_{i \in I} w_i D_{\mathrm{KL}}(S_i^*, S_i),
\end{align}
\end{subequations}
with weights $w \in \R_{>0}^{m}$ from~\eqref{eq:sym-Omega} and the supports
\begin{subequations}
\begin{align}
\supp S &= \{ (i,j) \in I \times J \colon S_{ij} \neq 0 \}, \\
\supp S_i &= \{ j \in J \colon S_{ij} \neq 0 \}.
\end{align}
\end{subequations}
Analogously to~\cite{losert1983dynamics}, we consider the index sets
\begin{subequations} \label{eq:J0pm}
\begin{align}
J_0(i) &= \big\{ j \in J \colon (\widehat{\Omega} S^*)_{ij} = \langle S_i^*, (\widehat{\Omega} S^*)_i \rangle \big\}, \\
J_-(i) &= \big\{ j \in J \colon (\widehat{\Omega} S^*)_{ij} > \langle S_i^*, (\widehat{\Omega} S^*)_i \rangle \big\}, \\
J_+(i) &= \big\{ j \in J \colon (\widehat{\Omega} S^*)_{ij} < \langle S_i^*, (\widehat{\Omega} S^*)_i \rangle \big\}
\end{align}
\end{subequations}
and define the continuous functions $Q \colon \ol{\mathcal{W}} \rightarrow \R_{\geq 0}$ and $V \colon \ol{\mathcal{W}} \rightarrow \R_{\geq 0} \cup \{ \infty \}$ by
\begin{subequations}\label{eq:def-Q-V}
\begin{alignat}{2}
Q \colon \ol{\mathcal{W}} &\rightarrow \R_{\geq 0},&\qquad
Q(S) &= \sum_{i \in I} w_i \sum_{j \in J_+(i)} S_{ij},
\label{eq:def-Q-V-Q} \\ \label{eq:def-Q-V-V}
V \colon \ol{\mathcal{W}} &\rightarrow \R_{\geq 0} \cup \{ \infty \},&\qquad
V(S) &= D_{\mathrm{KL}}^w(S^*, S) + 2 Q(S).
\end{alignat}
\end{subequations}
The equilibrium criterion~\eqref{eq:equilibrium-hat} implies
\begin{equation}
\supp(S_i^*) \subseteq J_0(i) \qquad \text{and} \qquad J_-(i), J_+(i) \subseteq J \setminus \supp(S_i^*) \qquad \forall i \in I,
\end{equation}
i.e.\ $V(S^*) = Q(S^*) = 0$. Using the Lyapunov function~\eqref{eq:f-Lyapunov}, we have the following.
\begin{lemma}[cf. {\cite[Proposition 2]{losert1983dynamics}}] \label{lem:convergence2}
There exists $\varepsilon > 0$ such that, if $\| S(t) - S^* \| < \varepsilon$ and $f(S(t)) < f(S^*)$ with $f$ given in~\eqref{eq:f-Lyapunov}, then $\frac{\D}{\D t} V(S(t)) < 0$.
\end{lemma}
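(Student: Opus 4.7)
The plan is to compute $\frac{\D}{\D t}V(S(t))$ explicitly and exhibit a cancellation, engineered by the factor~$2$ in front of $Q$ in \eqref{eq:def-Q-V-V}, that reduces the bound to $f(S)-f(S^*)$ minus a nonnegative quantity and is therefore strictly negative by hypothesis.

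The first step is to differentiate $D_{\mathrm{KL}}^w(S^*,S(t))$ along the flow using $\dot S_{ij}=S_{ij}\big((\Omega S)_{ij}-\langle S_i,(\Omega S)_i\rangle\big)$ together with $\Omega=\Diag(w)^{-1}\widehat\Omega$; the weights $w_i$ absorb cleanly, yielding the compact identity $\frac{\D}{\D t}D_{\mathrm{KL}}^w=\langle S-S^*,\widehat\Omega S\rangle$. Splitting $\widehat\Omega S=\widehat\Omega S^*+\widehat\Omega(S-S^*)$ and evaluating the linear piece via the partition~\eqref{eq:J0pm}, using $\supp S_i^*\subseteq J_0(i)$ so that $S_{ij}^*=0$ for $j\in J_\pm(i)$, I will obtain $\langle S-S^*,\widehat\Omega S^*\rangle=-A_++A_-$, where $A_\pm:=\sum_i\sum_{j\in J_\pm(i)}\gamma_{ij}S_{ij}\geq 0$, $\gamma_{ij}:=|(\widehat\Omega S^*)_{ij}-\alpha_i^*|>0$, and $\alpha_i^*:=\langle S_i^*,(\widehat\Omega S^*)_i\rangle$.

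The second step is to estimate $\frac{\D}{\D t}Q(S)=\sum_i\sum_{j\in J_+(i)}S_{ij}\big((\widehat\Omega S)_{ij}-\alpha_i(S)\big)$, where $\alpha_i(S):=\langle S_i,(\widehat\Omega S)_i\rangle$. By continuity, the bracketed factor equals $-\gamma_{ij}+O(\|S-S^*\|)$ uniformly in $j\in J_+(i)$. Since $S_{ij}=|S_{ij}-S_{ij}^*|$ for $j\in J_+(i)$, the resulting error is dominated by $C\|S-S^*\|\cdot A_+/\gamma_{\min}$ with $\gamma_{\min}:=\min_{i,j\in J_+(i)}\gamma_{ij}>0$, and this is at most $A_+/2$ once $\varepsilon$ is sufficiently small. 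This provides the key estimate $2\frac{\D}{\D t}Q\leq -A_+$.

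Summing the two contributions gives $\frac{\D}{\D t}V\leq -2A_++A_-+\langle S-S^*,\widehat\Omega(S-S^*)\rangle$. The closing move is the exact second-order Taylor identity $f(S)-f(S^*)=2\langle\widehat\Omega S^*,S-S^*\rangle+\langle S-S^*,\widehat\Omega(S-S^*)\rangle=-2A_++2A_-+\langle S-S^*,\widehat\Omega(S-S^*)\rangle$, which holds because $f$ is quadratic with Hessian $2\widehat\Omega$. Solving for the quadratic form and substituting reduces the bound to $\frac{\D}{\D t}V\leq f(S)-f(S^*)-A_-$, which is strictly negative because $f(S)<f(S^*)$ by hypothesis and $A_-\geq 0$. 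The main obstacle will be the bookkeeping that pins down this coefficient: the factor $2$ in front of $Q$ is the unique value for which the indefinite quadratic form $\langle S-S^*,\widehat\Omega(S-S^*)\rangle$ is cleanly absorbed into $f(S)-f(S^*)$ via the Taylor identity; with any other coefficient a residual quadratic term of uncontrolled sign would remain, and no bound of the required strength could be extracted merely from the Lyapunov property $\frac{\D}{\D t}f\geq 0$ established in~\eqref{eq:f-increases}.
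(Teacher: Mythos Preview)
Your argument is correct and follows the same strategy as the paper: compute $\frac{\D}{\D t}D_{\mathrm{KL}}^{w}$ along the flow, bound $\frac{\D}{\D t}Q$ near $S^*$ by continuity, and combine so that the hypothesis $f(S)<f(S^*)$ forces strict negativity. The paper's algebra is slightly more direct: instead of splitting $\widehat\Omega S=\widehat\Omega S^*+\widehat\Omega(S-S^*)$ and later invoking the quadratic Taylor identity to reassemble $f(S)-f(S^*)$, it simply adds and subtracts $\langle S^*,\widehat\Omega S^*\rangle$ in $\langle S,\widehat\Omega S\rangle-\langle S,\widehat\Omega S^*\rangle$ to obtain
\[
\tfrac{\D}{\D t}D_{\mathrm{KL}}^{w}=f(S)-f(S^*)+A_{+}-A_{-}
\]
at once, with no quadratic form ever appearing. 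Your route and the paper's are the same identity, just reorganized.

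One remark in your closing commentary is inaccurate: the coefficient $2$ in $V=D_{\mathrm{KL}}^{w}+2Q$ is not unique. From the paper's form one sees that with $V=D_{\mathrm{KL}}^{w}+cQ$ and the continuity estimate $\frac{\D}{\D t}Q\le -\theta A_{+}$ (any fixed $\theta\in(0,1)$, valid for $\|S-S^*\|$ small enough), one gets $\frac{\D}{\D t}V\le f(S)-f(S^*)-A_{-}+(1-c\theta)A_{+}$, which is negative for every $c>1$. The apparent specialness of $c=2$ in your write-up is an artifact of routing the argument through the quadratic form, not a structural feature.
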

\begin{proof}
Since $S(t) \in \mathcal{W}$ for all $t \geq 0$, we have $D_{\mathrm{KL}}^w(S^*, S(t)) < \infty$. Hence
\begin{subequations}\label{eq:dt-DKL}
\begin{align}
\begin{split}
\frac{\D}{\D t}& D_{\mathrm{KL}}^w(S^*, S(t)) \\
	&\stackrel{\substack{\hphantom{\sum_{j \in J} S_{ij}=1} \\ \eqref{eq:def-DKL}}}{=} -\sum_{i \in I} w_i \sum_{j \in \supp(S_i^*)} S_{ij}^* \frac{\dot{S}_{ij}}{S_{ij}} 
\end{split} \\
	&\stackrel{\substack{\hphantom{\sum_{j \in J} S_{ij}=1} \\ \eqref{eq:def-S-flow-F}}}{=} -\sum_{i \in I} \sum_{j \in \supp(S_i^*)} S_{ij}^* \big( (\widehat{\Omega} S)_{ij} - \langle S_i, (\widehat{\Omega} S)_i \rangle \big) \\
	&\stackrel{\hphantom{\sum_{j \in J} S_{ij}=1}}{=} \langle S, \widehat{\Omega} S \rangle - \langle S^*, \widehat{\Omega} S \rangle
	\stackrel{\eqref{eq:sym-Omega}}{=} \langle S, \widehat{\Omega} S \rangle - \langle S, \widehat{\Omega} S^* \rangle \label{eq:dDSt_d}\\
	&\stackrel{\hphantom{\sum_{j \in J} S_{ij}=1}}{=} \langle S, \widehat{\Omega} S \rangle - \langle S^*, \widehat{\Omega} S^* \rangle + \langle S^*, \widehat{\Omega} S^* \rangle - \langle S, \widehat{\Omega} S^* \rangle \\
\begin{split}
	&\stackrel{\sum_{j \in J} S_{ij}=1}{=} \langle S, \widehat{\Omega} S \rangle - \langle S^*, \widehat{\Omega} S^* \rangle + \sum_{i \in I} \sum_{j \in J} S_{ij} \Big( \langle S_i^*, (\widehat{\Omega} S^*)_i \rangle - (\widehat{\Omega} S^*)_{ij} \Big)
\end{split} \\
\begin{split}
	&\stackrel{\substack{\hphantom{\sum_{j \in J} S_{ij}=1} \\ \eqref{eq:J0pm}}}{=} \underbrace{\langle S, \widehat{\Omega} S \rangle - \langle S^*, \widehat{\Omega} S^* \rangle}_{=f(S)-f(S^*)< 0} + \sum_{i \in I} \sum_{j \in J_-(i)} S_{ij} \Big( \underbrace{ \langle S_i^*, (\widehat{\Omega} S^*)_i \rangle - (\widehat{\Omega} S^*)_{ij} }_{< 0} \Big) \\
	&\hphantom{\stackrel{\sum_{j \in J} S_{ij}=1}{=}} \qquad + \sum_{i \in I} \sum_{j \in J_+(i)} S_{ij} \Big( \underbrace{ \langle S_i^*, (\widehat{\Omega} S^*)_i \rangle - (\widehat{\Omega} S^*)_{ij} }_{> 0} \Big).
\end{split}
\end{align}
\end{subequations}
We now focus on $Q(S)$~\eqref{eq:def-Q-V-Q} that is added to the KL-divergence to define $V(S)$ in~\eqref{eq:def-Q-V-V}. We have for each $j \in J_+(i)$
\begin{equation}
\langle S_i, (\widehat{\Omega} S)_i \rangle - (\widehat{\Omega} S)_{ij} \quad\longrightarrow\quad \langle S_i^*, (\widehat{\Omega} S^*)_i \rangle - (\widehat{\Omega} S^*)_{ij} > 0 \qquad \text{as} \quad S \rightarrow S^*.
\end{equation}
Since the limit is positive, there exists $\varepsilon > 0$ such that $\| S - S^* \| < \varepsilon$ implies
\begin{equation}
\langle S_i, (\widehat{\Omega} S)_i \rangle - (\widehat{\Omega} S)_{ij} \geq \frac{3}{4} \Big( \langle S_i^*, (\widehat{\Omega} S^*)_i \rangle - (\widehat{\Omega} S^*)_{ij} \Big), \quad \forall j \in J_+(i), \quad \forall i \in I.
\end{equation}
Consequently,
\begin{subequations}\label{eq:dt-Q(S)}
\begin{align}
\frac{\D}{\D t} Q\big(S(t)\big)
	&\overset{\substack{\eqref{eq:def-S-flow-F} \\ \eqref{eq:def-Q-V-Q}}}{=} \sum_{i \in I} \sum_{j \in J_+(i)} S_{ij} \Big( (\widehat{\Omega} S)_{ij} - \langle S_i, (\widehat{\Omega} S)_i \rangle \Big) \\
	&\overset{\hphantom{\eqref{eq:def-Q-V-Q}}}{\leq} - \frac{3}{4} \sum_{i \in I} \sum_{j \in J_+(i)} S_{ij} \Big( \langle S_i^*, (\widehat{\Omega} S^*)_i \rangle - (\widehat{\Omega} S^*)_{ij}  \Big).
\end{align}
\end{subequations}
Substituting~\eqref{eq:dt-DKL} and~\eqref{eq:dt-Q(S)} into~\eqref{eq:def-Q-V-V}, we finally obtain
\begin{subequations}
\begin{align}
\begin{split}
\frac{\D}{\D t} V(S(t))
	&= \frac{\D}{\D t} D_{\mathrm{KL}}^w(S^*, S(t)) + 2 \frac{\D}{\D t} Q(S(t)) \\
	&\leq \langle S, \widehat{\Omega} S \rangle - \langle S^*, \widehat{\Omega} S^* \rangle + \sum_{i \in I} \sum_{j \in J_-(i)} S_{ij} \Big( \langle S_i^*, (\widehat{\Omega} S^*)_i \rangle - (\widehat{\Omega} S^*)_{ij} \Big) \\
	&\hskip0.25\textwidth - \frac{1}{2} \sum_{i \in I} \sum_{j \in J_+(i)} S_{ij} \Big( \langle S_i^*, (\widehat{\Omega} S^*)_i \rangle - (\widehat{\Omega} S^*)_{ij} \Big)
\end{split} \\
	&< 0.
\end{align}
\end{subequations} \qed
\end{proof}
\begin{proof}[Proof of Theorem~\ref{thm:convergence}]
Let $S^{\ast} \in \Lambda$ be any equilibrium point and $(t_{k})_{k \in \N}$ a corresponding sequence due to~\eqref{eq:def-Lambda-critical}.
We show that $D_{\mathrm{KL}}^w(S^*, S(t)) \rightarrow 0$ for $t \rightarrow \infty$, which is equivalent to the assertion $S(t) \rightarrow S^*$ to be shown.

Choose $\varepsilon > 0$ according to the Lemma~\ref{lem:convergence2}.
There exists $\varepsilon_1 > 0$ such that the (relatively) open set $U = \{ S \in \ol{\mathcal{W}} \colon V(S) < \varepsilon_1 \}$ is contained in $\{ S \in \ol{\mathcal{W}} \colon \| S - S^* \| < \varepsilon \}$.
The function $t \mapsto f(S(t))$ is strictly increasing unless the orbit $\{ S(t) \colon t \geq 0 \}$ consists of an equilibrium.
Hence, $f(S(t)) < f(S^*)$ for all $t \geq 0$.
Since ${S(t_k) \rightarrow S^*}$, we get $S(t_{k_0}) \in U$ for some $k_0 \in \N$. Since then $t \mapsto V(S(t))$ is decreasing, i.e.\ ${V(S(t)) < V(S(t_{k_0})) < \varepsilon_1}$ for all $t > t_{k_0}$, and because $V(S(t))$ is decreasing and $V(S(t_k)) \rightarrow V(S^*) = 0$, we get
\begin{equation}
0 \leq D_{\mathrm{KL}}^w (S^*, S(t)) \leq V(S(t)) \rightarrow 0 \quad \text{for} \quad t \rightarrow \infty,
\end{equation}
which implies $S(t) \rightarrow S^*$ for $t \rightarrow \infty$. \qed
\end{proof}

\subsection{Proof of Proposition~\ref{prop:approx-discr-euler}}
\begin{proof}
For any $t \in \N_{0}$, we set
\begin{subequations}\label{eq:def-Yt}
\begin{align}
Y^{(t)}(\tau)
&= F_{\tau}(S^{(t)})
\overset{\eqref{eq:Euler-scheme}}{=}
\exp_{S^{(t)}}(\tau\Omega S^{(t)})
\intertext{and thus have}\label{eq:Yt-relations}
Y^{(t)}(h) &= S^{(t+1)},\qquad
Y^{(t)}(0) = S^{(t)},\qquad
Y^{(0)}(0)= S_{0}.
\end{align}
\end{subequations}
Formula~\eqref{eq:dexp} implies
\begin{equation}\label{eq:def-dot-Y-t-G}
\dot Y^{(t)}(\tau)
= \frac{\D}{\D\tau}\exp_{S^{(t)}}(\tau\Omega S^{(t)})
= R_{Y^{(t)}(\tau)}(\Omega S^{(t)}) = G(Y^{(t)}),
\end{equation}
where we defined the shorthand $G(Y^{(t)})$.

Now, with $S(t)$ solving the $S$-flow~\eqref{eq:def-S-flow-F}, we estimate with any $T \geq t h$,
\begin{subequations}
\begin{align}
S(T) &- Y^{(t)}(T-th)
\\
&\stackrel{\hphantom{\eqref{eq:def-dot-Y-t-G},\eqref{eq:def-S-flow-F}}}{=} S(t h)-Y^{(t)}(0) + \int_{0}^{T-t h}\frac{\D}{\D\tau}\big(S(th+\tau)-Y^{(t)}(\tau)\big)\D\tau
\\
&\stackrel{\eqref{eq:def-dot-Y-t-G},\eqref{eq:def-S-flow-F}}{=} S(t h)-Y^{(t)}(0) + \int_{0}^{T-t h} \Big(F\big(S(t h+\tau)\big) - G\big(Y^{(t)}(\tau)\big)\Big)\D \tau
\\
\begin{split}
&\stackrel{\hphantom{\eqref{eq:def-dot-Y-t-G},\eqref{eq:def-S-flow-F}}}{=} S(th)-Y^{(t)}(0) + \int_{0}^{T-t h} \Big(F\big(S(t h+\tau)\big) - F\big(Y^{(t)}(\tau)\big)\Big)\D \tau
	\\ &\qquad\qquad
	+ \int_{0}^{T-t h} \Big(F\big(Y^{(t)}(\tau)\big) - G\big(Y^{(t)}(\tau)\big)\Big)\D \tau
\end{split} \\
\begin{split}
&\stackrel{\hphantom{\eqref{eq:def-dot-Y-t-G},\eqref{eq:def-S-flow-F}}}{=} S(th)-Y^{(t)}(0) + \int_{0}^{T-t h} \Big(F\big(S(t h+\tau)\big) - F\big(Y^{(t)}(\tau)\big)\Big)\D \tau
\\ &\qquad\qquad
	+ \int_{0}^{T-t h} \int_{0}^{\tau} \frac{\D}{\D\tau}\Big(F\big(Y^{(t)}(\tau)\big) - G\big(Y^{(t)}(\tau)\big)\Big)\Big|_{\tau=\lambda}\D\lambda\D\tau
\end{split} \\
\begin{split}
	&\stackrel{\hphantom{\eqref{eq:def-dot-Y-t-G},\eqref{eq:def-S-flow-F}}}{=}  S(th) - Y^{(t)}(0) + \int_{0}^{T-t h} \Big(F\big(S(t h+\tau)\big) - F\big(Y^{(t)}(\tau)\big)\Big)\D \tau
\\ &\qquad\qquad
+ \int_{0}^{T-t h} \int_{0}^{\tau} \Big(dF\big(Y^{(t)}(\lambda)\big)\big[G\big(Y^{(t)}(\lambda)\big)\big] \\
	&\qquad\qquad \hphantom{+ \int_{0}^{T-t h} \int_{0}^{\tau} \Big(}\qquad - dG\big(Y^{(t)}(\lambda)\big)\big[G\big(Y^{(t)}(\lambda)\big)\big]\Big)\D\lambda\D\tau.
\end{split}
\end{align}
\end{subequations}
By assumption, $F$ given by~\eqref{eq:def-S-flow-F} is $C^{1}$, as is $G$ given by~\eqref{eq:def-dot-Y-t-G} which has the same form. Consequently, regarding the integrand of the last integral, since $\ol{\mc{W}}$ is compact there exists a constant $C$ such that
\begin{equation}
\Big\|
dF\big(Y^{(t)}(\lambda)\big)\big[G\big(Y^{(t)}(\lambda)\big)\big] -  dG\big(Y^{(t)}(\lambda)\big)\big[G\big(Y^{(t)}(\lambda)\big)\big]
\Big\| \leq C,\qquad \forall Y^{(t)} \in \ol{\mathcal{W}}.
\end{equation}
Hence,
\begin{subequations}
\begin{align}
\|S(T) &- Y^{(t)}(T-th)\|
\\
\begin{split}
&\leq \|S(th) - Y^{(t)}(0)\| + \int_{0}^{T-t h} \big\|F\big(S(t h+\tau)\big) - F\big(Y^{(t)}(\tau)\big)\big\|\D \tau \\
&\qquad + C \int_{0}^{T-t h} \int_{0}^{\tau}\D\lambda\D\tau
\end{split} \\
\begin{split}
&\leq \|S(th) - Y^{(t)}(0)\| + L \int_{0}^{T-t h} \|S(t h+\tau)-Y^{(t)}(\tau)\|\D\tau \\
&\qquad + \frac{C}{2}(T-t h)^{2}
\end{split}
\end{align}
\end{subequations}
Applying Gronwall's inequality~\cite[Lemma~2.7]{Teschl:2012aa} yields
\begin{equation}
\|S(T) - Y^{(t)}(T-th)\|
\leq \Big(\|S(th) - Y^{(t)}(0)\| + \frac{C}{2}(T-t h)^{2}\Big) e^{L(T-t h)}
\end{equation}
and setting $T=(t+1) h$
\begin{subequations}
\begin{align}
\big\|S\big((t+1)h\big)-Y^{(t)}(h)\big\|
&\overset{\eqref{eq:Yt-relations}}{=}
\big\|S\big((t+1)h\big)-S^{(t+1)}\big\| \\
&\overset{\eqref{eq:Yt-relations}}{\leq}
\Big(\|S(th) - S^{(t)}\| + \frac{C h^{2}}{2}\Big) e^{L h}.
\end{align}
\end{subequations}
Thus,
\begin{subequations}
\begin{align}
\|S(t h)-S^{(t)}\|
&\leq \Big(\|S\big((t-1)h\big) - S^{(t-1)}\| + \frac{Ch^{2}}{2}\Big) e^{L h}
\\
&\leq \Big(\Big(\|S\big((t-2)h\big) - S^{(t-2)}\| + \frac{Ch^{2}}{2}\Big) e^{L h} + \frac{Ch^{2}}{2}\Big) e^{L h}
\\
&= \big\|S\big((t-2)h\big) - S^{(t-2)}\big\|e^{2 L h}
+ \frac{Ch^{2}}{2}\big(e^{2 L h} + e^{L y}\big)
\\
&= \underbrace{\|S(0) - S^{(0)}\|}_{=0\;\text{by~\eqref{eq:Yt-relations}}} e^{t L h} + \frac{Ch^{2}}{2}\sum_{k \in [t]}e^{k L h}
\\
&= \frac{Ch^{2}}{2}\Big(\frac{e^{(t+1) L h}-1}{e^{L h}-1}-1\Big)
= \frac{Ch^{2}}{2}e^{L h}\frac{e^{t L h}-1}{e^{L h}-1}
\intertext{and using $e^{L h}\geq 1+L h$}
&\leq \frac{C}{2 L} h e^{(t+1) L h},\qquad \forall t \in \N.
\end{align}
\end{subequations} \qed
\end{proof}

\section{Stability Statements for Dynamical Systems}\label{sec:Appendix}
We state basic results from the literature which are used to analyze the stability of the equilibria of the $S$-flow in Section~\ref{sec:convergence-stability}. 
\begin{theorem} \label{thm:stability}
Let $x^*$ be an equilibrium point of the system $\dot{x}(t) = F(x(t))$ with $F \in C^1(U, \R^n)$.
\begin{enumerate}[(a)]
\item If all eigenvalues of the Jacobian matrix $\tfrac{\partial F}{\partial x}(x^*)$ have negative real part, then $x^*$ is exponentially stable.
\item If the Jacobian matrix $\tfrac{\partial F}{\partial x}(x^*)$ has an eigenvalue with positive real part, then $x^*$ is unstable.
\end{enumerate}
\end{theorem}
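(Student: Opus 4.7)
This is a classical result from the theory of dynamical systems, so the proof strategy is to reduce to the linearization and construct appropriate Lyapunov-type functions. Shifting coordinates, I assume without loss of generality that $x^* = 0$, write $A = \tfrac{\partial F}{\partial x}(0)$, and use the Taylor expansion $F(x) = A x + r(x)$ with $\|r(x)\| = o(\|x\|)$ as $x \to 0$.

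For part (a), the plan is to build a quadratic Lyapunov function from the Lyapunov equation. Since every eigenvalue of $A$ has negative real part, standard linear algebra (the Lyapunov equation for Hurwitz matrices) yields a symmetric positive definite matrix $P$ solving $A^\top P + P A = -I_n$. Set $V(x) = x^\top P x$ and compute along trajectories
\begin{equation}
\dot V(x) = 2 x^\top P F(x) = x^\top(A^\top P + P A) x + 2 x^\top P r(x) \leq -\|x\|^2 + 2\|P\|_2\,\|x\|\,\|r(x)\|.
\end{equation}
The $o(\|x\|)$ bound on $r$ gives a neighborhood $\{ \|x\| < \delta \}$ on which $\dot V(x) \leq -\tfrac{1}{2}\|x\|^2 \leq -\alpha V(x)$ with $\alpha = \tfrac{1}{2\lambda_{\max}(P)}$. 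Gronwall's lemma applied to $V$ then yields $\|x(t)\|^2 \leq \tfrac{\lambda_{\max}(P)}{\lambda_{\min}(P)} \|x(0)\|^2 e^{-\alpha t}$ whenever $x(0)$ is small enough, which is exactly exponential stability.

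For part (b), the plan is to use a Chetaev-type instability argument. Decompose $\R^n = E_{\text{s}} \oplus E_{\text{c}} \oplus E_{\text{u}}$ into the stable/center/unstable generalized eigenspaces of $A$, with $E_{\text{u}} \neq \{0\}$ by assumption. There exists a quadratic form $Q$ that is positive definite on $E_{\text{u}}$ and negative semidefinite on $E_{\text{s}} \oplus E_{\text{c}}$, and a second Lyapunov-type identity (dual to the one above, applied to $-A$ restricted to $E_{\text{u}}$) shows $Q$ grows strictly along the linearized flow inside the cone $\mathcal{C} = \{ x \colon Q(x) > 0 \}$. Writing $\dot Q(x) = (\nabla Q(x))^\top(A x + r(x))$ and using again $\|r(x)\| = o(\|x\|)$, I obtain a smaller cone $\mathcal{C}' \subset \mathcal{C}$ and a radius $\delta > 0$ such that $\dot Q(x) \geq \beta\,\|x\|^2$ on $\mathcal{C}' \cap \{0 < \|x\| < \delta\}$. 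Any trajectory entering this truncated cone must then leave the ball $\{\|x\| < \delta\}$ in finite time, since $Q$ strictly increases while staying bounded inside the ball. Because arbitrarily small initial data with a nonzero unstable component lie in $\mathcal{C}'$, this establishes instability.

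The main obstacle is part (b): one must ensure that the nonlinear remainder $r(x)$ does not destroy the forward invariance of the cone $\mathcal{C}'$ nor the sign of $\dot Q$. This is handled by choosing $\mathcal{C}'$ strictly inside $\mathcal{C}$ so that the gap between the two gives room to absorb the $o(\|x\|^2)$ error from $r$, at the cost of shrinking the neighborhood. Both parts are standard and can be found, for instance, in Perko's textbook cited elsewhere in the paper, so in the final write-up it would suffice to state the above sketch and refer to \cite{perko2013differential}.
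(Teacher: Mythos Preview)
Your sketch is correct and standard; there is nothing wrong with the Lyapunov-equation argument for (a) or the Chetaev/cone argument for (b). In fact, the paper does not prove this theorem at all: it simply states it in the appendix as a classical fact and cites \cite[Theorem~6.10]{Teschl:2012aa} for (a) and \cite[Proposition~6.2.1]{Schaeffer:2016aa} for (b). So your final remark---that one may just refer to a textbook---already matches exactly what the paper does, except that the paper cites Teschl and Schaeffer rather than Perko. Your cone construction in (b) is essentially the content of the paper's subsequent Proposition~\ref{prop:unstable-cone}, which the paper also attributes to \cite{Schaeffer:2016aa}.
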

Statement (a) can be found in~\cite[Theorem~6.10]{Teschl:2012aa}. For statement (b) we refer to~\cite[Proposition~6.2.1]{Schaeffer:2016aa}. These stability criteria concern flows $\dot{x}(t) = F(x(t))$ on an open subset $U \subseteq \R^n$.

Since we regard the $S$-flow as a flow on the compact set $\ol{\mathcal{W}}$, we need a few additional arguments. In~\cite[Section 6.8.4]{Schaeffer:2016aa}, a direct proof of theorem~\ref{thm:stability}(b) is sketched. Since we employ techniques that are used in that proof for our own analysis, we summarize the main statements in the following proposition for the reader's convenience. Informally, the proposition states that, if $\tfrac{\partial F}{\partial x}(x^*)$ has an eigenvalue with positive real part, then there exists an open truncated cone at $x^*$ where the flow $\dot{x} = F(x)$ is repelled from $x^*$.
\begin{proposition} \label{prop:unstable-cone}
Let $x^*$ be an equilibrium point of $\dot{x}(t) = F(x(t))$ with $F \in C^1(U, \R^n)$. Then
\begin{enumerate}[(a)]
\item There exist a sufficiently small $\varepsilon_1 > 0$ and a (real) similarity transform
\begin{equation}
V^{-1} \tfrac{\partial F}{\partial x}(x^*) V = \begin{pmatrix} A_{\text{sc}} & 0 \\ 0 & A_{\text{u}} \end{pmatrix} = A,
\end{equation}
such that
\begin{enumerate}[(i)]
\item $\Re(\lambda) \leq 0$ for all eigenvalues $\lambda$ of $A_{\text{sc}}$,
\item $\Re(\lambda) > 0$ for all eigenvalues $\lambda$ of $A_{\text{u}}$,
\item $\langle y_{\text{sc}}, A_{\text{sc}} y_{\text{sc}} \rangle \leq \tfrac{\varepsilon_1}{4} \| y_{\text{sc}} \|_2^2$,
\item $\langle y_{\text{u}}, A_{\text{u}} y_{\text{u}} \rangle \geq \varepsilon_1 \| y_{\text{u}} \|_2^2$.
\end{enumerate}
\item Suppose $\tfrac{\partial F}{\partial x}(x^*)$ has at least one eigenvalue $\lambda$ with $\Re(\lambda) > 0$.
Considering an affine coordinate transform $y=V^{-1}(x - x^*)$ with $V \in \GL_n(\R)$ from (a), the resulting flow
\begin{equation}
\dot{y} = G(y) = V^{-1} F(Vy + x^*),
\end{equation}
which has the equilibrium $y^* = 0$ with $\tfrac{\partial G}{\partial y}(0) = V^{-1} \tfrac{\partial F}{\partial x}(x^*) V = A$, has the following property.
There exist $\eta > 0$, $\delta > 0$ and $\varepsilon > 0$, such that if the flow starts at some point in the open truncated cone
\begin{equation}
U_{\eta, \delta} = \Big\{ y = \begin{psmallmatrix} y_{\text{sc}} \\ y_{\text{u}} \end{psmallmatrix} \in \R^n \colon \| y_{\text{sc}} \|_2^2 < \eta \| y_{\text{u}} \|_2^2,\ \| y \|_2 < \delta \Big\}
\subset B_{\delta}(0) = \big\{ y \in \R^n \colon \| y \|_2 < \delta \big\},
\end{equation}
then the solution will not cross the conical portion of $\partial U_{\eta, \delta}$, i.e.
\begin{equation}
\big\{ y \in \R^{n} \colon \| y_{\text{sc}} \|_2^2 = \eta \| y_{\text{u}} \|_2^2,\ \| y \|_2 < \delta \big\},
\end{equation}
and it fulfills $\| y(t) \| \geq \| y(0) \| e^{\varepsilon t}$ as long as $y(t) \in U_{\eta,\delta}$, i.e., $y(t)$ leaves the ball $B_{\delta}(0)$ at some time point. Especially, the equilibrium $y^* = 0$ is unstable. This property is accordingly transferred to the equilibrium $x^*$ of $\dot{x}(t) = F(x(t))$ using $x(t) = V y(t) + x^*$.
\end{enumerate}
\end{proposition}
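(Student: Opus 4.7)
The plan is to prove (a) by a real Jordan--form/block--diagonalisation argument followed by an $\varepsilon$--rescaling of the Jordan chains, and (b) by a Lyapunov/cone--invariance argument performed in the adapted coordinates supplied by~(a).

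For (a), I would first split the spectrum of $\tfrac{\partial F}{\partial x}(x^{\ast})$ into the stable/center part $\{\lambda : \Re\lambda\leq 0\}$ and the unstable part $\{\lambda : \Re\lambda> 0\}$. Since the Jacobian is real, these two spectral subsets correspond to a pair of complementary real invariant subspaces, and there exists a real similarity transform block--diagonalising the Jacobian into $A_{\mathrm{sc}}\oplus A_{\mathrm{u}}$; this immediately yields (i) and (ii). To enforce the quantitative bounds (iii) and (iv), I would conjugate each real Jordan chain by a diagonal matrix $\Diag(1,\tau,\tau^{2},\dots)$ with $\tau>0$ small, the standard trick that shrinks the superdiagonal entries of the real Jordan form to an arbitrarily small size. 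After this rescaling, $\tfrac{1}{2}(A_{\mathrm{u}}+A_{\mathrm{u}}^{\T})$ is arbitrarily close to a diagonal matrix whose entries are the real parts of the eigenvalues of $A_{\mathrm{u}}$, and analogously for $A_{\mathrm{sc}}$. Choosing $\varepsilon_{1}$ strictly below $\min\{\Re\lambda : \lambda\in\sigma(A_{\mathrm{u}})\}$ and $\tau$ sufficiently small then delivers (iii) and (iv).

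For (b), pass to the coordinates $y=V^{-1}(x-x^{\ast})$ and write $\dot y = G(y) = Ay+R(y)$ with $R(0)=0$ and $DR(0)=0$; hence, for any prescribed $\gamma>0$ one can pick $\delta>0$ so small that $\|R(y)\|_{2}\leq \gamma\|y\|_{2}$ on $\ol{B_{\delta}(0)}$. Differentiating $\|y_{\mathrm{u}}\|_{2}^{2}$ and $\|y_{\mathrm{sc}}\|_{2}^{2}$ along the flow and invoking (iii), (iv) together with Cauchy--Schwarz produces estimates of the form
\begin{align*}
\tfrac{\D}{\D t}\|y_{\mathrm{u}}\|_{2}^{2} &\geq 2\varepsilon_{1}\|y_{\mathrm{u}}\|_{2}^{2}-2\gamma\|y_{\mathrm{u}}\|_{2}\|y\|_{2},\\
\tfrac{\D}{\D t}\|y_{\mathrm{sc}}\|_{2}^{2} &\leq \tfrac{\varepsilon_{1}}{2}\|y_{\mathrm{sc}}\|_{2}^{2}+2\gamma\|y_{\mathrm{sc}}\|_{2}\|y\|_{2}.
\end{align*}
Inside the cone $U_{\eta,\delta}$ one has $\|y\|_{2}^{2}\leq(1+\eta)\|y_{\mathrm{u}}\|_{2}^{2}$, so for $\eta>0$ fixed sufficiently small and $\gamma$ small enough (attained by shrinking $\delta$) the first inequality gives $\tfrac{\D}{\D t}\|y_{\mathrm{u}}\|_{2}^{2}\geq 2\varepsilon\|y_{\mathrm{u}}\|_{2}^{2}$ for some $\varepsilon>0$, while combining both inequalities on the conical portion $\|y_{\mathrm{sc}}\|_{2}^{2}=\eta\|y_{\mathrm{u}}\|_{2}^{2}$ of $\partial U_{\eta,\delta}$ yields $\tfrac{\D}{\D t}(\|y_{\mathrm{sc}}\|_{2}^{2}-\eta\|y_{\mathrm{u}}\|_{2}^{2})\leq 0$. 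The latter guarantees that the trajectory cannot cross the cone boundary outward, and Gronwall applied to $\|y_{\mathrm{u}}\|_{2}^{2}$, together with $\|y\|_{2}\geq\|y_{\mathrm{u}}\|_{2}$ and the cone inclusion, delivers the lower bound $\|y(t)\|_{2}\geq \|y(0)\|_{2}e^{\varepsilon t}$ (up to an absorbable prefactor), forcing the trajectory to leave $B_{\delta}(0)$ in finite time and proving instability; the conclusion transfers back to $x$--coordinates via $x(t)=V y(t)+x^{\ast}$.

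The main obstacle will be the simultaneous tuning of $\eta$, $\delta$ and $\varepsilon$: the cross--coupling introduced by $R(y)$ mixes stable/center and unstable components, so the inequality controlling the sign of $\tfrac{\D}{\D t}(\|y_{\mathrm{sc}}\|_{2}^{2}-\eta\|y_{\mathrm{u}}\|_{2}^{2})$ on the conical boundary has to be made compatible with a quantitatively strict exponential growth bound on $\|y_{\mathrm{u}}\|_{2}$. It is precisely this tension that explains why the bounds in (iii) and (iv) are stated with the asymmetric factors $\tfrac{\varepsilon_{1}}{4}$ and $\varepsilon_{1}$, leaving the quantitative buffer between the two blocks required by the estimates above.
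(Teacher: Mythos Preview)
The paper does not actually prove this proposition: it is stated in the appendix as a summary of material from \cite[Section~6.8.4]{Schaeffer:2016aa}, explicitly ``for the reader's convenience'', without an accompanying proof. Your proposal is the standard textbook argument (real Jordan form with $\varepsilon$--rescaled chains for (a); cone--Lyapunov argument for (b)) and is essentially what one finds in Schaeffer--Cain, so in that sense it matches the paper's intent.

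One small point to tighten: from $\tfrac{\D}{\D t}\|y_{\mathrm{u}}\|_{2}^{2}\geq 2\varepsilon\|y_{\mathrm{u}}\|_{2}^{2}$ and $\|y\|_{2}\geq\|y_{\mathrm{u}}\|_{2}$ you obtain $\|y(t)\|_{2}\geq\|y_{\mathrm{u}}(0)\|_{2}e^{\varepsilon t}\geq (1+\eta)^{-1/2}\|y(0)\|_{2}e^{\varepsilon t}$, so the prefactor $(1+\eta)^{-1/2}$ is genuinely there and cannot be ``absorbed'' by shrinking $\varepsilon$ uniformly for all $t\geq 0$. This is irrelevant for the conclusion that the trajectory exits $B_{\delta}(0)$ (hence for instability), but if you want the estimate exactly as stated you should instead bound $\tfrac{\D}{\D t}\|y\|_{2}^{2}$ directly inside the cone, using that $\|y_{\mathrm{sc}}\|_{2}^{2}<\eta\|y_{\mathrm{u}}\|_{2}^{2}$ controls the possibly negative contribution from the $A_{\mathrm{sc}}$--block.
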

We note that if $\tfrac{\partial F}{\partial x}(x^*)$ is diagonalizable with real eigenvalues then the similarity transform in proposition~\ref{prop:unstable-cone}(a) is just the diagonalization.
In general, if $v \in \C^n$ is an eigenvector of $\tfrac{\partial F}{\partial x}(x^*)$ corresponding to an eigenvalue $\lambda \in \C$ with $\Re(\lambda) > 0$ and $V \in \GL_n(\R)$ is given by Proposition~\ref{prop:unstable-cone}(a), then $V^{-1} \Re(v) = \binom{0}{y_{\text{u}}}$ and $V^{-1} \Im(v) = \binom{0}{\tilde{y}_{\text{u}}}$.

\clearpage

\section*{ORCID iDs}
\noindent
Artjom Zern \includegraphics[height=3mm]{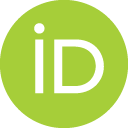} \url{https://orcid.org/0000-0002-0412-3337}\\
Alexander Zeilmann \includegraphics[height=3mm]{ORCIDiD_icon128x128} \url{https://orcid.org/0000-0002-8119-0349}\\
Christoph Schn\"{o}rr \includegraphics[height=3mm]{ORCIDiD_icon128x128} \url{https://orcid.org/0000-0002-8999-2338}

\bibliographystyle{spmpsci}
\bibliography{Literature}
\end{document}